\numberwithin{equation}{section}
\definecolor{qqwuqq}{rgb}{0,0,0}
\begin{document}

\date{\today}

\title[Resolvent Trace Asymptotics on Stratified Spaces]
{Resolvent Trace Asymptotics on Stratified Spaces}

\author{Luiz Hartmann}
\address{Universidade Federal de S\~ao Carlos (UFSCar),
	Brazil}
\email{hartmann@dm.ufscar.br}
\urladdr{http://www.dm.ufscar.br/profs/hartmann}

\author{Matthias Lesch}
\address{Universit\"at Bonn,
	Germany}
\email{ml@matthiaslesch.de, lesch@math.uni-bonn.de}
\urladdr{www.matthiaslesch.de, www.math.uni-bonn.de/people/lesch}

\author{Boris Vertman} 
\address{Universit\"at Oldenburg, Germany} 
\email{boris.vertman@uni-oldenburg.de}
\urladdr{https://uol.de/boris-vertman/}

\thanks{Partial support by CAPES/PVE, FAPESP: 2016/16949-8, 2016/24707-4, 
Priority Programme
"Geometry at Infinity" of DFG and the Hausdorff Center 
for Mathematics, Bonn}

\subjclass[2010]{Primary 35J75, 58J35; Secondary 58J37.}
\keywords{Resolvent trace asymptotics, stratified spaces, cone-edge metrics}

\begin{abstract}
Let $(M,g)$ be a compact smoothly stratified pseudomanifold with an iterated 
cone-edge metric satisfying a spectral Witt condition. Under these 
assumptions the Hodge-Laplacian $\Delta$ is essentially self-ad\-joint. We 
establish the asymptotic expansion for the resolvent trace
of $\Delta$. Our method proceeds by induction on the
depth and applies in principle to a larger class of second-order differential 
operators of regular-singular 
type,\eg Dirac Laplacians. Our arguments are functional analytic, do not 
rely on microlocal techniques and are very explicit.
The results of this paper provide a basis for studying index theory and spectral invariants in the
setting of smoothly stratified spaces and in particular allow for the definition of zeta-determinants 
and analytic torsion in this general setup.
\end{abstract}

\maketitle

\tableofcontents

\section{Introduction and statement of the main results}

Stratified spaces with iterated cone-edge metrics provide a natural class
of singular spaces which encompasses
algebraic varieties, various moduli spaces as well as limits of families of smooth
spaces under controlled degenerations. Analytic techniques in the singular setting 
date back to Kondratiev \cite{Kon} in the early 1960's. In a seminal series 
of papers Cheeger \cite{Che0, Che3, Che2} initiated the program of 
``extending the theory of the Laplace operator to certain Riemannian spaces 
with singularities''.

Subsequently, a flow of publications was inspired by Cheeger's work. 
Geometric operators on spaces with isolated conical and cylindrical singularities became a central 
aspect of research by Br\"uning and Seeley \cite{BS1, BS2, BS3}, Lesch \cite{LesL}, Melrose \cite{Mel},
Schulze \cite{Sch1}, and many more. Elliptic theory in the setting of 
non-isolated conical singularities, the so-called 
edge (wedge) singularities(\cf Figure \ref{figure1}), was developed 
by 
Mazzeo 
\cite{Maz}, as well as
Schulze \cite{Sch2, Sch3} and his collaborators. 

\begin{figure}[h]
	\includegraphics[scale=0.75]{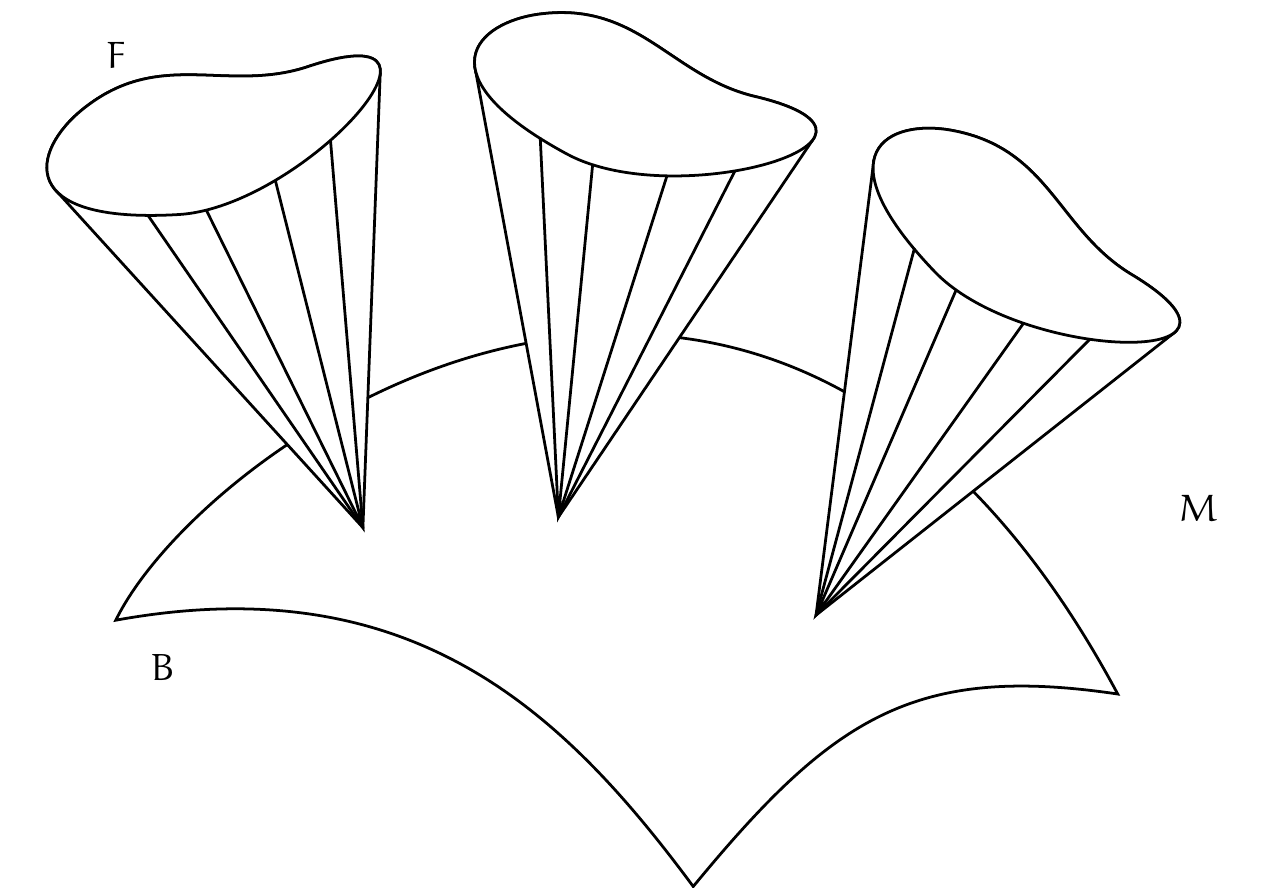}
	\caption{Simple Edge as a Cone bundle over $B$.}
	\label{figure1}
\end{figure}

Elliptic theory and index theoretic problems 
in the general setting of stratified spaces have been studied by Albin-Leichtnam-Mazzeo-Piazza
\cite{ALMP1, ALMP2} and Akutagawa-Carron-Mazzeo \cite{ACM}.

Let $(M,g)$ be a compact smoothly stratified pseudomanifold with an iterated 
cone-edge metric satisfying a spectral Witt condition. Under these 
assumptions the Hodge-Laplacian $\Delta$ is essentially self-adjoint. The 
present paper establishes the asymptotic expansion for the resolvent trace
of $\Delta$. The argument
applies in principle to a larger class of second-order differential operators 
of regular-singular type\eg Dirac Laplacians. Our main 
result can be viewed as one of the ultimate goals of Cheeger's spectral geometric program and reads as follows.

\begin{theorem}
	Let $(M,g)$ be a compact smoothly stratified pseudomanifold with an 
	iterated cone-edge metric and depth $d$ satisfying the spectral Witt 
	condition,\cf 
	Section \ref{Section-SSSApdepth} and Definition \ref{Witt-stratified} 
	below. Denote by $\Delta$ the corresponding Hodge-Laplacian. Then 
	$\Delta$ is essentially self-adjoint. Moreover, for $2m > \dim M$ the 
	$m$-th
	power $(\Delta + z^2)^{-m}$ of the resolvent is trace class and its trace 
	admits the 
	following asymptotic expansion as $z \to \infty$
		\begin{equation*}
		\begin{split}
		\tr \, (\Delta + z^2)^{-m} \sim z^{-2m} \cdot \Bl\sum_{j=0}^{\infty} 
		a_j \cdot 
		z^{-j + \dim M} + \sum_{\{Y\}} \sum_{j=0}^{\infty} 
		\sum_{\ell=0}^{d(Y)}
		c^Y_{j\ell} \cdot z^{-j + \dim Y} \log^\ell z\Br, 
		\end{split}
		\end{equation*}
		where the second summation is over all singular strata $\{Y\}$ and 
		$d(Y)$
		denotes the depth of the stratum $Y$. 
\end{theorem}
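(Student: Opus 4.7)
The plan is an induction on the depth $d$ of $(M,g)$. In the base case $d=0$, $M$ is a closed smooth Riemannian manifold and the theorem reduces to the classical Minakshisundaram--Pleijel asymptotics for $\tr(\Delta+z^{2})^{-m}$, obtained from the standard heat kernel expansion via Mellin transform in the time variable; there are no singular strata and no logarithmic terms.

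For the inductive step, I would first settle essential self-adjointness. Near every singular stratum $Y$ the metric is quasi-isometric to a cone bundle over $Y$ with cross-section $L$ of depth $d(L)<d$; by induction $\Delta_{L}$ is essentially self-adjoint and its spectrum is known. The spectral Witt condition forces the radial indicial roots at the cone tip to lie outside the critical window, so a standard Hardy-type pairing argument rules out ambiguity in the closed extension and yields essential self-adjointness of $\Delta$ on $M$. With this in hand, $(\Delta+z^{2})^{-m}$ is defined unambiguously, and the main task becomes the asymptotic expansion.

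Next, decompose $\tr(\Delta+z^{2})^{-m}$ using a partition of unity subordinate to a cover of $M$ by a relatively compact piece of the smooth stratum $M^\circ$ and distinguished neighborhoods $\mathcal{U}_{Y}$ of each singular stratum $Y$, each identified with a cone bundle over $Y$ with fiber $C(L_{Y})$. On the smooth piece, a standard interior parametrix built from the Euclidean resolvent produces the contribution $z^{-2m}\sum_{j} a_{j}\, z^{-j+\dim M}$. On $\mathcal{U}_{Y}$, write the Hodge Laplacian in the form $-\partial_{x}^{2}+x^{-2}(\Delta_{L_{Y}}+c)+\Delta_{Y}+\text{l.o.t.}$ and represent the model cone resolvent explicitly via modified Bessel functions acting spectrally on $L_{Y}$; integrate the diagonal first in the radial variable $x$ against this explicit kernel, then in the edge variable $y\in Y$ against the parameter-dependent resolvent on $Y$. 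Combining the Bessel-kernel Mellin analysis with the inductive asymptotic expansion for $\Delta_{L_{Y}}$ produces, stratum by stratum, a contribution of the shape $\sum_{j,\ell} c^{Y}_{j\ell}\, z^{-j+\dim Y}\log^{\ell} z$ with $\ell\le d(Y)$.

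The main obstacle is the bookkeeping of the logarithmic powers and the justification of the interchanges of spectral sum, radial Mellin integration and edge integration. Each passage through a cone construction can contribute at most one new $\log z$ factor, arising from a resonance between a power of $x$ in the Bessel kernel and an eigenvalue exponent of $\Delta_{L_{Y}}$; iterating this through $d(Y)$ levels of the induction gives precisely the bound $\ell\le d(Y)$. Uniform parameter-dependent elliptic estimates along the edge $Y$, together with the decay of the explicit Bessel kernel as $x\to 0$ and $x\to \infty$, are what make the local contributions trace class and allow the cutoff errors from the partition of unity to be absorbed into the coefficients. The construction stays entirely functional analytic: Mellin analysis in the radial variable and the spectral theorem for the cross-section, iterated through the depth, replace any microlocal or edge-pseudodifferential machinery.
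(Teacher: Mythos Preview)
Your high-level strategy matches the paper's: induction on depth, partition of unity into an interior piece and tubular neighborhoods of singular strata, an interior parametrix for the smooth contribution, and a model-cone/edge parametrix built from Bessel functions near each stratum. The bound $\ell\le d(Y)$ on logarithmic powers also arises exactly as you say, one extra power per level of iteration.

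There is, however, a structural gap in your induction hypothesis. You propose to feed the asymptotics of $\tr(\Delta_{L_Y}+\zeta^{2})^{-m}$ back into the radial Bessel analysis. But the tangential operator appearing in the model edge Laplacian is not $\Delta_{L_Y}$ itself; after the rescaling it is $A(s)=\bigl(Q(s)+\tfrac{1}{2}\bigr)^{2}$, a matrix built from $\Delta_{L_Y}$, $d_{L_Y}$, $d^{t}_{L_Y}$ and constants. Moreover, when you expand the boundary parametrix (via Neumann series for the lower-order terms and via the interior symbol calculus), what you need to take the trace of on the cross-section are not powers of $(A(s)+\zeta^{2})^{-1}$ but monomials of the form
\[
R_{1}(A(s)+\zeta^{2})^{-1}R_{2}(A(s)+\zeta^{2})^{-1}\cdots,
\]
with $R_{i}$ ranging over first- and second-order operators on $L_Y$. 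The paper therefore formulates and proves the inductive statement not for $(\Delta_{d}+z^{2})^{-m}$ alone but for all such monomials $\langle \Delta_{d}+z^{2}\rangle^{-\alpha}$ of given degree; only in that generality does the induction close. Your proposal as written does not provide this strengthened hypothesis, and without it the step from depth $d-1$ to depth $d$ does not go through.

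On the technical side, the paper does not carry out a direct Mellin analysis of the Bessel kernel. Instead it uses a rescaling-to-the-base-point argument to reduce the diagonal of the boundary parametrix at $(x,s)$ to an interior symbol evaluated at $(1,s)$, applies an operator-valued interior parametrix there (this is where the cross-section monomial expansion enters), and then feeds the resulting $\sigma(x,\zeta)$ into the Br\"uning--Seeley Singular Asymptotics Lemma to perform the radial integration and produce the logarithmic terms. Schatten-class membership, needed both for trace-class control and for the SAL integrability condition, is obtained via an auxiliary Legendre-type operator, not from the Bessel kernel alone. Your sketch of ``uniform parameter-dependent elliptic estimates'' and ``decay of the Bessel kernel'' gestures at these ingredients but does not supply them; in particular the justification of trace class for the localized parametrix and the verification of the SAL integrability hypothesis are where most of the actual work in the paper lies.
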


A priori we cannot exclude any of the coefficients in the asymptotic 
expansion. In particular, the resolvent trace asymptotics may admit terms of 
the form $z^{-2m}\log (z)$, which cannot be excluded by an even-odd calculus 
argument as in \cite{MaVe}, unless the 
stratification depth $d$ equals $1$. 

A recent preprint by Albin and
Gell-Redman \cite{AlGe2} addresses the resolvent asymptotics on stratified spaces by completely different
techniques than in the present paper. While \cite{AlGe2} relies on an intricate microlocal blowup analysis,
our paper is completely independent and follows rather a functional analytic approach motivated by Br\"uning  and Seeley \cite{BS3}. 

In particular, we 
use the well-known Singular Asymptotics Lemma (SAL) \cite[p. 372]{BS2}, 
the Trace Lemma  
\cite[Lemma 4.3]{BS3}, as well as the explicit construction of the Legendre operator in \cite[Lemma 3.5]{BS2} 
and \cite[Theorem 4.1]{BS3}, and the proof of integrability in SAL in 
\cite[Lemma 5.5]{BS3}.

Our paper is organized as follows. We first recall the fundamentals of 
smoothly 
stratified pseudomanifolds in \S \ref{stratified-edge-spaces-section}, and 
the structure of the
Hodge-Laplacian for iterated cone-edge metrics in \S \ref{rescaling-section}. 
In \S 
\ref{section-domains} we recall the main results of our previous paper \cite{HLV17} on  
the domains of the Gauss-Bonnet and Hodge-Laplace operators on a smoothly 
stratified pseudomanifold.
In \S \ref{scales-section} we recall the notions of Hilbert scales and define weighted
Sobolev spaces on abstract cones and edges, as in \cite{HLV17}. In \S \ref{Bessel-section}
we extend the results of \cite{HLV17} to study the resolvent of the Bessel operator on a 
model cone. In \S \ref{parametrix-section} the results on the Bessel operator are used in order
to study the resolvent of a Laplace operator on a model edge, extending \cite{HLV17}. In \S \ref{BS-section}, we apply the 
Singular Asymptotics Lemma with parameters (Appendix) to establish an 
asymptotic 
expansion for the trace of the resolvent 
on a model edge. We then conclude the paper 
with a proof of our main result by induction on the depth of the 
stratification 
in \S \ref{iterated-section}.

\subsection*{Acknowledgments} The authors thank the S\~ao Paulo 
Research Foundation, Bra\-zil, the DFG Priority Programme "Geometry at Infinity" and
the Hausdorff Center for Mathematics in Bonn, Germany. The third author would 
like to thank the Federal 
University of S\~ao Carlos 
(UFSCar) for hospitality during the various phases of the project.

\section{Smoothly stratified spaces and iterated cone-edge metrics}\label{stratified-edge-spaces-section}

In this section we recall basic aspects of the definition of a compact smoothly stratified space 
of depth $d\in \N_0$, referring the reader for a complete discussion,\eg  
to \cite{ALMP1,ALMP2,Alb}. 

\subsection{Smoothly stratified spaces of depth zero and one}

A compact smoo\-thly stratified space of depth zero is by definition a 
smooth compact manifold. 
A compact smoothly stratified space $M$ of depth one consists by definition 
of a smooth open and 
dense stratum $M_{\reg}$, a singular stratum $Y$, which is a closed compact 
manifold, and its tubular 
neighborhood $\cU \subset M$. The tubular 
neighborhood $\cU$ is the total space of a fibration $\phi: \cU \to Y$ with fibres given by $[0,1) \times F /_{(0,\theta_1)
\sim (0,\theta_2)}$, where $F$ is a smooth compact manifold. An incomplete edge metric $g$ on $M$ is by definition smooth 
away from the stratum $Y$, and is given in $\cU \cap M_{\reg}$ by 
\begin{equation}
g|_{\cU \cap M} = dx^2 + \phi^{\ast} g_{Y} + x^2 g_{F} + 
h=:g_0 + h,
\end{equation}
where $g_{Y}$ is a smooth Riemannian metric on the stratum $Y$,
$g_F$ is a symmetric two tensor on the level set $\{x=1\}$, whose restriction to the links
$F$ is a smooth family of Riemannian metrics. The higher order term $h$ satisfies $|h|_{g_0}=O(x)$, when $x\to 0$,
where $|\cdot |_{g_0}$ denotes the norm on symmetric $2$-tensors of 
$T^*M_{\reg}$ induced by the 
leading term $g_0$.

\subsection{Smoothly stratified spaces of arbitrary 
depth d}\label{Section-SSSApdepth}

We say that $M$ is a compact smoothly stratified space
of depth $d\geq 2$ with strata $\{Y_{\alpha}\}_{\alpha\in A}$, where each 
stratum is identified with its interior, 
if $M$ is compact and the following, inductively defined, properties are 
satisfied: 
     \begin{enumerate}
	\item[i)] If $Y_\alpha \cap \overline{Y}_\beta \neq \varnothing$ then 
	$Y_{\alpha}\subset \overline{Y}_\beta$.	
	\item[ii)] The depth of a stratum $Y$ is the largest $j \in \N_0$ such 
	that there exists a chain of pairwise distinct strata $\{Y=Y_j,\; 
	Y_{j-1},\ldots, Y_1, Y_0 = M_{\reg}\}$ with
	$Y_i \subset \overline{Y}_{i-1}$ for all $1	\leq i \leq j$. 
	\item[iii)] The stratum of maximal depth is smooth and compact. The maximal 
	depth of any stratum of 
	$M$ is called the depth of $M$.
	\item[iv)] Any point of $Y_\alpha$, a stratum of depth $j$, has a tubular 
	neighborhood $\cU_\alpha \subset M$, which is a total space of a 
	fibration 
	$\phi_\alpha: \cU_\alpha \to \phi_\alpha (\cU_\alpha) \subseteq Y_\alpha$ 
	with fibers given by cones 
	$[0,1) \times F_\alpha /_{(0,\theta_1) \sim (0,\theta_2)}$, with link $F_\alpha$ being a compact 
	smoothly stratified space of depth $j-1$.
	\item[v)] Let $X_j$ be the union of all strata of dimension less or equal 
	than $j$. The $M = X_n$ and $X_n \setminus X_{n-1}$ is an open smooth 
	manifold dense in $M$.
	\item[vi)] If in addition $X_{n-1}=X_{n-2}$,\ie there is no stratum of 
	dimension $\dim M -1 = n-1$ then we call $M$ a smoothly stratified 
	pseudomanifold. In this case we have 
	\begin{equation*}
	M = X_n \supset X_{n-1}=X_{n-2} \supseteq X_{n-3}\supseteq 
	\cdots \supseteq 
	X_1 \supseteq X_0.
	\end{equation*}
\end{enumerate}
We call the union $X_{n-2}$ of all $Y_{\alpha}$, $\alpha \in A$, the singular 
part of $M$, 
and its complement in $M$, the regular part $M_{\reg}$ of $M$. The precise 
definition of compact smoothly stratified spaces 
is more involved, \cite{ALMP1,ALMP2} and \cite{Alb}. 

We define an iterated cone-edge metric $g$ on $M$ by asking $g$ to be a smooth Riemannian metric away 
from the singular strata, and requiring it to be in each tubular neighborhood $\cU_\alpha$ of any point 
in $Y_\alpha$ of the form
\begin{equation}\label{iemetricUa}
g|_{\cU_\alpha \cap M} = dx^2 + \phi^{\ast}_\alpha g_{Y_\alpha} + x^2 g_{F_\alpha} + 
h=:g_0 + h,
\end{equation}
where the restriction 
$g_{Y_\alpha} \restriction \phi_{\alpha} (\cU_\alpha)$ is a smooth Riemannian metric,
$g_{F_\alpha}$ is a symmetric two tensor on the level set $\{x=1\}$, whose restriction to the links
$F_\alpha$ (smoothly stratified spaces of depth at most $(j-1)$) 
is a smooth family of iterated cone-edge metrics. The higher order term $h$ satisfies as before $|h|_{g_0}=O(x)$, when $x\to 0$.

We also assume that $\phi_\alpha|_{\b\cU_\alpha}: (\b\cU_\alpha, g_{F_\alpha} + \phi^{\ast}_\alpha g_{Y_\alpha})
\to (\phi_{\alpha} (\cU_\alpha), g_{Y_\alpha})$ is a Riemannian submersion and 
put the same condition in the lower depth. The existence of such 
iterated cone-edge metrics is discussed in \cite[Proposition 3.1]{ALMP1}.
A compact smoothly stratified space of depth $2$ with an iterated cone-edge metric 
is illustrated in Figure \ref{figure2}.

\begin{figure}[h]
	\includegraphics[scale=0.75]{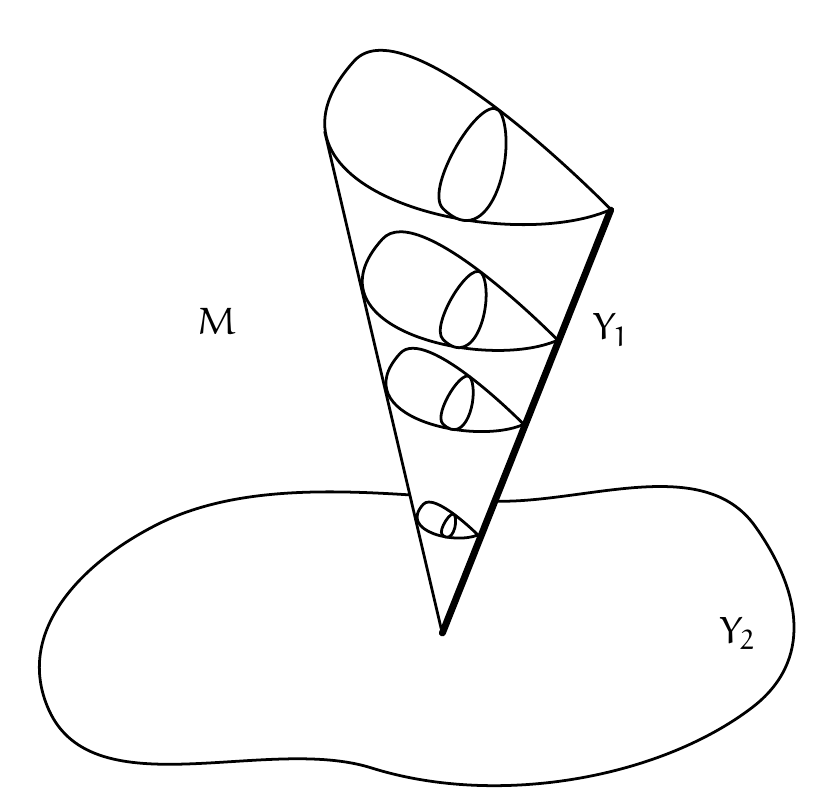}
	\caption{Tubular neighborhood $\cU\subset M$, 
	$M$ of depth $2$.}
	\label{figure2}
\end{figure}

\subsection{Resolution of singularities and edge vector fields}

The singularities of $M$ can be resolved as follows.
The resolution $\widetilde{M}$ is defined iteratively by replacing the cones 
in the fibrations $\phi_\alpha$ with finite cylinders $[0, 1) \times F_\alpha$, 
and subsequently replacing the compact smoothly stratified space $F_\alpha$ of 
lower depth with its resolution $\widetilde{F}_\alpha$ as well. This defines a 
compact manifold with corners \cite[Section 2]{ALMP1}. 
The same procedure applies to any tubular neighborhood $\cU_\alpha$, leading 
to its
resolution $\widetilde{\cU}_\alpha$. 

The iterated edge vector fields $\mathcal{V}_{e, d}$ as well as the 
iterated incomplete edge vector fields $\mathcal{V}_{ie, d}$ on the compact 
smoothly stratified space $M$
are defined by an inductive procedure. In case $d=0$, both are simply the 
smooth vector fields. 
For $d\geq 1$, we denote by $\rho$ a smooth
	function on the resolution $\widetilde{M}$, nowhere vanishing
	in its open interior, and vanishing to first order at each 
	boundary face. Then $\mathcal{V}_{e,d} = \rho \mathcal{V}_{ie,d}$ are by 
	definition smooth
	vector fields in the open interior $M_{\reg}$, and for any tubular 
	neighborhood $\cU_\alpha$ with
	radial function $x$ and local 
	coordinates $\{s_1, \ldots, s_{\dim Y_\alpha}\}$ in $\phi_\alpha(\cU_\alpha) \subseteq Y_\alpha$, we set
	\begin{equation}\label{edge-vector-field-depth-k}
	\begin{split}
	&\mathcal{V}_{e,d}\restriction \widetilde{\cU}_\alpha = 
	C^\infty(\widetilde{\cU}_\alpha)\textup{- span}\, 
	\{\rho \partial_x, \rho\partial_{s_1}, ..., 
	\rho \partial_{s_{\dim Y_\alpha}}, \mathcal{V}_{e, d-1}(F_\alpha)\}, \\
	&\mathcal{V}_{ie,d}\restriction \widetilde{\cU}_\alpha = 
	C^\infty(\widetilde{\cU}_\alpha)\textup{- span}\, 
	\{\partial_x, \partial_{s_1}, ..., \partial_{s_{\dim Y_\alpha}},  
	\rho^{-1}\mathcal{V}_{e,d-1}(F_\alpha)\}.
	\end{split}
	\end{equation}

\subsection{Sobolev spaces on compact smoothly stratified pseudomanifolds}

Consider a compact smoothly stratified pseudomanifold $M$
of depth $d$  with an iterated cone-edge metric $g$.
Consider the incomplete edge tangent bundle ${}^{ie}TM$ canonically defined by the condition that the 
incomplete edge vector fields $\mathcal{V}_{ie, d}$ form (locally) a spanning 
set of sections
$\mathcal{V}_{ie, d} = C^\infty(M, {}^{ie}TM)$. We write ${}^{ie}T^*M$ for 
the dual of ${}^{ie}TM$,
which we call the incomplete edge cotangent bundle. We define the edge Sobolev spaces with values in 
$\Lambda^* ({}^{ie}T^*M)$ as follows.

\begin{definition}\label{Sobolev-spaces}
Let $M$ be a compact smoothly stratified pseudomanifold of depth $d\in \N_0$
with an iterated cone-edge metric $g$. We denote by $L^2_*(M)$ the $L^2$ 
completion of smooth compactly supported differential forms 
\begin{equation}\label{difforms}
{}^{ie}\Omega_0^*(M_{\reg}) := C^\infty_0(M_{\reg}, \Lambda^* 
({}^{ie}T^*M)).
\end{equation}
Denote by $\rho$ a smooth
function on the resolution $\widetilde{M}$, nowhere vanishing
in its open interior, and vanishing to first order at each 
boundary face. Then, for any $s\in \N_0$ and $\delta \in \R$
we define the weighted edge Sobolev spaces by
\begin{equation}
\begin{split}
&\sH_e^s(M):= \{\w \in L^2_*(M) \mid V_1 \circ \cdots \circ V_s \w \in L^2_*(M), \ 
\textup{for} \ V_j \in \mathcal{V}_{e, d}\}, \\
&\sH_e^{s, \delta}(M):= \{\w = \rho^\delta u \mid u \in \sH_e^s(M)\},
\end{split}
\end{equation}
where $V_1 \circ \cdots \circ V_s \w \in L^2_*(M)$ is understood in the 
distributional sense.
\end{definition}

\section{Hodge-Laplacian on a smoothly stratified 
pseudomanifold}\label{rescaling-section}

Consider a compact smoothly stratified pseudomanifold $M$ with an iterated 
cone-edge metric $g$. Let $d$ denote the exterior 
derivative acting on compactly supported differential forms on $M_{\reg}$, 
and $d^t$ be its formal adjoint with respect to the $L^2$-inner product 
induced by the Riemannian metric $g$. Then 
the Gauss-Bonnet operator of $(M,g)$ is defined by $D:= d + d^t$. The  
Hodge-Laplacian is defined as 
\begin{equation*}
\Delta = D^t D = d^t d + d d^t. 
\end{equation*}
We now discuss the singular structure of the Gauss-Bonnet 
operator $D$ and 
the Hodge-Laplacian $\Delta$. 

\subsection{Rescaling transformation on isolated cones}

Consider for the moment the special case of $(M_{\reg},g_0)$ being an open 
truncated
cone $M_{\reg}= (0,1) \times F$ over a compact smooth Riemannian manifold 
$(F,g_F)$ of dimension $f = \dim F$
and $g_0= dx^2 + x^2g_F$. In this setting, $D$ and $\Delta$, acting on smooth 
compactly supported differential forms,
can be written in a concise form using a rescaling, 
already employed by Br\"uning-Seeley \cite[Section 5]{BS4}
\begin{equation}
\begin{split}
S_0: C^\infty_0((0,1), \Omega^{k-1}(F) \times \Omega^k(F)) \to \Omega^k((0,1) \times F), 
\\ (\w_{k-1}, \w_k) \mapsto x^{k-1-\frac{f}{2}} \w_{k-1} \wedge dx + x^{k-\frac{f}{2}} \w_k.
\end{split}
\end{equation}
This rescaling extends to a unitary transformation on the $L^2$-completions.
The transformed operators take the form
 \begin{equation} \begin{split}\label{laplace-cone-model}
&S_0^{-1}\circ D \circ 
S_0 = \Gamma \left( \frac{d}{dx} + \frac{1}{x} Q\right), \\
&S_0^{-1}\circ \Delta \circ 
S_0=\left(-\frac{d^2}{d x^2}+\frac{1}{x^2}\bl A-\frac{1}{4}\br \right), 
\end{split}\end{equation}
where $Q$ is a self-adjoint operator in $L^2(\Omega^*(F))$ and $A = Q(Q+1) + 
\frac{1}{4} = (Q+\frac{1}{2})^2$.
We want to reinterpret the transformation $S_0$ using incomplete edge cotangent bundles. 
In fact, writing $X$ for the multiplication operator, multiplying by $x\in (0,1)$, we find 
\begin{equation*}
\begin{split}
{}^{ie}\Omega^k ((0,1) \times F) &\equiv C^\infty((0,1), X^{k-1} \Omega^{k-1}(F) \times X^{k} \Omega^{k}(F)), \\
L^2({}^{ie}\Omega^k ((0,1) \times F), g_0) &= L^2((0,1), x^f dx; \ X^{k-1} L^2(\Omega^{k-1}(F)) \otimes X^{k} L^2(\Omega^{k}(F))).
\end{split}
\end{equation*}
The rescaling $S_0$ now yields the following map
\begin{equation*}
\scalebox{0.85}{
\xymatrix{
C^\infty((0,1), X^{k-1} \Omega^{k-1}(F) \times X^{k} \Omega^{k}(F))\ni 
\omega \ar@{|->}[r] \ar@{=}[d]&
(\omega_{k-1},\omega_k)\in C^\infty_0 ((0,1), \Omega^{k-1}(F) \times 
\Omega^{k}(F)) \ar[d]^{S_0} \\
{}^{ie}\Omega^k_0 ((0,1) \times F)\ni \omega \ar@{|->}[r]^{S} &  
x^{-\frac{f}{2}}\omega\in {}^{ie}\Omega^k_0 ((0,1) \times F),
}}
\end{equation*}
where $\omega = x^{k-1} dx\wedge \omega_{k-1} + x^{k}\omega_k$.
The use of incomplete edge cotangent bundles not only simplifies the action of the Br\"uning-Seeley rescaling
$S_0$, but is also a convenient way to discuss higher depth stratified spaces where the cross section $F$ is a
stratified space itself. 

\subsection{Rescaling transformation on a tubular neighborhood 
$\cU_\alpha$ of a stratum}

We proceed in the notation of
\S \ref{stratified-edge-spaces-section} and consider a tubular neighborhood 
$\cU_\alpha$ of a point in a singular stratum $Y_\alpha$, where the edge 
metric is of the form
\begin{equation}\label{iemetricUa-2}
g|_{\cU_\alpha} = dx^2 + \phi^{\ast}_\alpha g_{Y_\alpha} + x^2 g_{F_\alpha} + 
h=:g_0 + h.
\end{equation}
The operators $D$ and $\Delta$ restricted to $\cU_\alpha$ act on compactly supported smooth differential forms 
${}^{ie}\Omega_0^*(Y_\alpha \times C(F_{\alpha,\reg})) = C^\infty_0(Y_\alpha 
\times C(F_{\alpha,\reg}), 
{}^{ie}\Lambda^*  T^*(Y_\alpha \times C(F_{\alpha,\reg})))$, where 
$C(F_{\alpha,\reg}) = (0,1) \times F_{\alpha,\reg}$
and we employ the notation introduced in \eqref{difforms}. We write $f= \dim F_\alpha$, 
denote by $(s)$ the local variables on $Y_\alpha$, and by $x\in (0,1)$ the radial function on
the cone $C(F_\alpha)$. We rewrite $D$ and $\Delta$ using the rescaling from 
the previous section:
\begin{align*}
& S: {}^{ie}\Omega_0^*(Y_\alpha \times C(F_{\alpha,\reg})) \to 
{}^{ie}\Omega_0^*(Y_\alpha \times C(F_{\alpha,\reg})), 
\\ & \qquad \qquad \qquad \qquad \omega \longmapsto x^{-f/2} \omega.
\end{align*}
The map extends to an isometry 
\begin{equation}
\label{unitary}
S: L^2\left( {}^{ie}\Omega_0^*, dx^2 + g_{F_\alpha}(s) + \phi_{\alpha}^* g_{Y_\alpha} \right)
\longrightarrow L^2({}^{ie}\Omega_0^*, g_0).
\end{equation}
We first consider $D^{g_0}$, $\Delta^{g_0}$ with respect to the unperturbed metric $g_0$.
Under this isometric transformation, the operators take the form
\begin{equation} \begin{split}\label{laplace}
&S^{-1}\circ D^{g_0} \circ 
S = \Gamma_\alpha \left( \frac{d}{dx} + \frac{1}{x} Q_\alpha(s) \right) + T_\alpha, \\
&S^{-1}\circ \Delta^{g_0} \circ 
S=\left(-\frac{d^2}{d x^2}+\frac{1}{x^2}\bl A_\alpha(s)-\frac{1}{4}\br 
\right) + \Delta_{Y_\alpha}, 
\end{split}\end{equation}
where $Q_\alpha(s)$ is a smooth family of symmetric differential operators acting on smooth compactly supported differential 
forms ${}^{ie}\Omega_0^*(F_{\alpha,\reg})$, $\Gamma_\alpha$ is skew-adjoint 
and a 
unitary operator on
$L^2((0,1) \times Y_{\alpha}, L^2({}^{ie}\Omega_0^*(F_\alpha)))$ and $T_\alpha$ is a Dirac operator on 
$Y_{\alpha} \subset \R^b$, see our previous work 
\cite[\S 4.5]{HLV17} for details on the structure and commutator relations of these operators. 

Moreover, $A_\alpha(s) = Q_\alpha(s) (Q_\alpha(s) +1) + \frac{1}{4} = 
(Q_{\alpha}(s)+\frac{1}{2})^2$ is 
given explicitly in terms of Hodge-Laplacians $\Delta_{F_\alpha}(s)$, 
exterior derivatives 
$d_{F_\alpha}$ and their formal adjoints $d^t_{F_\alpha}$ over the compact 
smoothly stratified space 
$(F_\alpha, g_{F_\alpha}(s))$ of lower depth by 
\begin{equation}
\begin{aligned}
&A_\alpha(s) \restriction {}^{ie}\Omega_0^{\ell - 1}(F_\alpha) \oplus 
{}^{ie}\Omega_0^{\ell}(F_\alpha) 
\\ 
&=\left(
\begin{array}{cc}\Delta_{\ell-1,F_\alpha}(s) + (\ell-(f +3)/2)^2 & 
2(-1)^{\ell}\, 
\delta_{\ell,F_\alpha}(s) \\ 
2(-1)^{\ell}\, d_{\ell-1,F_\alpha}& \Delta_{\ell,F_\alpha}(s) + 
(\ell-(f +1)/2)^2
\end{array}\right).
\end{aligned}
\end{equation}

In the general case of non-zero $h$ with $|h|_{g_0} = O(x)$ as $x \to 0$
and $\phi_\alpha|_{\b\cU_\alpha}: (\b\cU_\alpha, g_{F_\alpha}(s) + \phi^{\ast}_\alpha g_{Y_\alpha})
\to (\phi_{\alpha} (\cU_\alpha), g_{Y_\alpha})$ being a Riemannian submersion,
the formulas in \eqref{laplace} exhibit additional higher order terms and,\eg 
the second formula changes to
\begin{equation}\label{laplace1}
S^{-1}\circ \Delta \circ 
S=\left(-\frac{d^2}{d x^2}+\frac{1}{x^2}\bl A_\alpha(s) -\frac{1}{4}\br 
\right) + \Delta_{Y_\alpha}+R, 
\end{equation}
where $R \in x \mathcal{V}^2_{ie}(M)$. 
These additional terms in $R$ are higher order correction terms determined by the 
curvature of the Riemannian submersion $\phi$ as well as the second fundamental forms 
of the fibers $F_\alpha$. Below we work exclusively under the rescaling $S$ and write the 
rescaled operator simply as $\Delta$ again. Note that the rescalings are defined 
locally over $Y_\alpha$ and on different local neighborhoods are equivalent up to a 
diffeomorphism.

\section{Domains of Gauss-Bonnet and Hodge-Laplace operators}\label{section-domains}

We proceed in the previously set notation of a compact smoothly stratified 
pseudomanifold $M$, with an iterated cone-edge metric $g$. 
Recall that $L^2_*(M)$ denotes the $L^2$ completion of compactly supported 
differential forms ${}^{ie}\Omega^*_0(M_{\reg})$.
We now may define the minimal and maximal domains of $D$ as follows.
\begin{equation*}
\begin{split}
&\mathcal{D}_{\min} (D) := \bigsetdef{u \in L^2_*(M,g)}{ \exists (u_n) 
\subset {}^{ie}\Omega^*_0(M_{\reg}): 
u_n \xrightarrow{L^2} u, Du_n \xrightarrow{L^2} Du}, \\
&\mathcal{D}_{\min} (D) \subseteq \mathcal{D}_{\max} := \bigsetdef{u \in 
L^2_*(M,g)}{ Du \in L^2_*(M,g)}, 
\end{split}
\end{equation*}
where for $u \in L^2_*(M)$ we define $Du \in L^2_*(M)$ in the distributional sense.
Similar definitions hold for the minimal and maximal domains of other differential 
operators, including $\Delta$ as well as the tangential operators $Q_\alpha(s)$ and $A_\alpha(s)$. 

In our previous paper \cite{HLV17}, under a spectral Witt condition for the 
operators 
$Q_\alpha(s)$ we have identified the domains of $D$ and $\Delta$
explicitly in terms of weighted edge Sobolev spaces. We now formulate this 
spectral Witt condition explicitly. We define for any $s \in Y_\alpha$ using 
the inner 
product of $L^2_*(F_\alpha, g_{F_\alpha}(s))$
\begin{equation}
t(Q_\alpha(s))[u] := \|Q_\alpha(s) u\|^2_{L^2}.
\end{equation}
This is the quadratic form associated to the symmetric differential operator $Q_\alpha(s)^2$,
densely defined with domain $\Omega^*_0(F_{\alpha,\reg})$ in the Hilbert 
space 
$L^2_*(F_\alpha, g_{F_\alpha}(s))$. The numerical range of $Q_\alpha(s)$ is 
defined by 
\begin{equation}
\Theta(Q_\alpha(s)) := \left\{ t(Q_\alpha(s))[u] \in \R \mid u \in 
\Omega^*_0(F_{\alpha,\reg})), 
\|u\|^2_{L^2} = 1\right\}.
\end{equation}

We can now formulate the spectral Witt condition
as follows. 
 
\begin{definition}\label{Witt-stratified}
Let $M$ be a compact smoothly stratified pseudomanifold with an iterated 
cone-edge metric $g$. We say that $(M,g)$ satisfies the spectral Witt 
condition if there exists $\delta>0$ such that for all $\alpha$ and all $s\in 
Y_\alpha$ the numerical ranges $\Theta(Q_\alpha(s))$ are contained in 
$[4+\delta,\infty)$.
\end{definition}

Under that condition, which for a Witt space can always be achieved by a 
scaling of the cone angles in the metric $g$, 
we have obtained the following result in our previous paper
\cite[Theorem 1.1]{HLV17}.

\begin{theorem}\label{thm-domains}
	Let $M$ be a compact smoothly stratified pseudomanifold with an iterated 
	cone-edge metric $g$, satisfying the spectral Witt condition. 
	Then both $D $ and $\Delta$ are essentially self-adjoint and discrete with domains
	\begin{equation}
	\begin{split}
	&\sD_{\max}(D) = \sD_{\min}(D) = \sH^{1,1}_e(M), \\
	&\sD_{\max}(\Delta) = \sD_{\min}(\Delta) = \sH^{2,2}_e(M).
	\end{split}
	\end{equation}
Similarly, $Q_\alpha(s)$ and $A_\alpha(s)$ are essentially self-adjoint and discrete with domains
given independently of the parameter $s$ by 
\begin{equation}
	\begin{split}
	&\sD_{\max}(Q_\alpha(s)) = \sD_{\min}(Q_\alpha(s)) = \sH^{1,1}_e(F_\alpha), \\
	&\sD_{\max}(A_\alpha(s)) = \sD_{\min}(A_\alpha(s)) = \sH^{2,2}_e(F_\alpha).
	\end{split}
	\end{equation}
\end{theorem}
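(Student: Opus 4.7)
The plan is to proceed by induction on the depth $d$ of the stratification. For the base case $d=0$, the space $M$ is a smooth compact Riemannian manifold, so essential self-adjointness and discreteness of $D$ and $\Delta$, together with the identifications of their domains with the classical $H^1$ and $H^2$ Sobolev spaces, are standard facts; these coincide with $\sH^{1,1}_e(M)$ and $\sH^{2,2}_e(M)$ in the absence of singular strata, since then $\rho$ can be taken identically $1$ and the edge vector fields reduce to ordinary smooth vector fields.

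For the inductive step I would assume the theorem for every compact smoothly stratified pseudomanifold of depth at most $d-1$. Since each link $F_\alpha$ is such a space, the inductive hypothesis applied to $F_\alpha$, combined with the algebraic identity $A_\alpha(s)=(Q_\alpha(s)+\tfrac12)^2$, delivers essential self-adjointness and the weighted-Sobolev domain characterizations of $Q_\alpha(s)$ and $A_\alpha(s)$ uniformly in the parameter $s\in Y_\alpha$, establishing the second pair of identities in the theorem.

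The global statements on $M$ are then derived by localization with a partition of unity subordinate to a cover of $M$ by $M_{\reg}$ and the tubular neighborhoods $\{\cU_\alpha\}$. On $M_{\reg}$ standard interior elliptic regularity applies. Near each $\cU_\alpha$ I apply the rescaling $S$ of Section \ref{rescaling-section} to bring $\Delta$ into the form \eqref{laplace1}, whose leading part
\[
L_\alpha := -\frac{d^2}{dx^2} + \frac{1}{x^2}\bl A_\alpha(s)-\tfrac14\br + \Delta_{Y_\alpha}
\]
is a regular-singular Bessel operator fibered over $Y_\alpha$. The crucial analytic input is that the spectral Witt condition $\Theta(Q_\alpha(s))\subset [4+\delta,\infty)$ forces the spectrum of $A_\alpha(s)-\tfrac14$ to lie in $[2,\infty)$, strictly above the limit-point threshold $\tfrac34$ for the scalar Bessel operator $-\partial_x^2+c/x^2$ at $x=0$. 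A Mellin transform in the radial variable $x$, coupled with the eigendecomposition of $A_\alpha(s)$ provided by the inductive step, then yields essential self-adjointness of $L_\alpha$ on compactly supported sections and identifies its common domain; unraveling the weight conventions shows that the inverse rescaling $S^{-1}$ sends this domain precisely onto $\sH^{2,2}_e(\cU_\alpha)$.

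The higher-order correction $R\in x\mathcal{V}^2_{ie}(M)$ in \eqref{laplace1} is then absorbed as a perturbation with relative bound zero: the extra factor of the defining function allows two edge derivatives to be controlled by $L_\alpha$ in the edge Sobolev scale. A Kato-Rellich argument transfers essential self-adjointness and the domain identification from $L_\alpha$ to the full $\Delta$, and an entirely analogous first-order treatment of $D$, whose tangential operator is $Q_\alpha(s)$, yields $\sD(D)=\sH^{1,1}_e(M)$. Discreteness of the spectrum follows from the compact embedding $\sH^{s,s}_e(M)\hookrightarrow L^2_*(M)$ for $s>0$, a Rellich-Kondrachov statement in the edge Sobolev scale. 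The genuine technical obstacle is the fibered Mellin analysis of the radial Bessel operator: one must verify uniformly in $s\in Y_\alpha$ that the Witt gap $A_\alpha(s)-\tfrac14\geq 2$ keeps every indicial root off the critical weight line, so that $\sD_{\max}=\sD_{\min}$ and the common domain admits the explicit $\sH^{s,s}_e$-description claimed.
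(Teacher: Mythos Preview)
The present paper does not contain a proof of this theorem: immediately before the statement the authors write ``we have obtained the following result in our previous paper \cite[Theorem 1.1]{HLV17}'', and the theorem is merely recalled here as background for the resolvent analysis that follows. There is therefore no proof in this paper to compare your proposal against.

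That said, your sketch is broadly aligned with the strategy actually carried out in \cite{HLV17}: induction on the depth, passage to the model Bessel-type operator via the rescaling $S$, use of the spectral Witt gap to force the limit-point case at $x=0$, and a perturbative treatment of the higher-order remainder $R\in x\,\mathcal{V}^2_{ie}(M)$. One point where your outline is a bit too optimistic is the step ``Mellin transform in the radial variable $x$, coupled with the eigendecomposition of $A_\alpha(s)$'': in the genuine edge (as opposed to isolated cone) setting the family $A_\alpha(s)$ varies with $s\in Y_\alpha$, so there is no single eigendecomposition to diagonalize against, and one cannot simply reduce to a direct sum of scalar Bessel operators. The argument in \cite{HLV17} instead works in an abstract Hilbert-scale framework (the $W^{n,\delta}$ spaces recalled in Section~\ref{scales-section} of the present paper) and establishes the mapping properties and invertibility of the model operator directly on those scales, uniformly in $s$; this is also what underlies Propositions~\ref{small} and~\ref{parametrix-bounded-0} here. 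Your Kato--Rellich step and the compact-embedding argument for discreteness are correct in spirit, but the uniform-in-$s$ domain identification and the precise Sobolev-scale bookkeeping are where the real work lies, and your sketch does not yet supply them.
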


We point out that in view of the discreteness of $Q_\alpha(s)$ and 
$A_\alpha(s)$, asserted in the theorem above, 
the spectral Witt condition now implies a spectral gap for the tangential operators
\begin{equation}\label{Witt-condition}
\begin{split}
\forall \, s \in Y_\alpha: \ &\spec Q_\alpha(s) \cap \left[-2, 2\right] = \varnothing, \\
\forall \, s \in Y_\alpha: &\spec A_\alpha(s) \cap \left[0, 
\frac{9}{4}\right] = \varnothing.
\end{split}
\end{equation}
We conclude the section by noting that due to the positivity of $\Delta$ and 
$A_\alpha(s)$, there
exist bounded inverses for $z>0$
\begin{equation}\label{inverses}
	\begin{split}
	&(\Delta + z^2)^{-1}: L^2_*(M) \to \sH^{2,2}_e(M) \subset L^2_*(M), \\
	&(A_\alpha(s) + z^2)^{-1}: L^2_*(F_\alpha) \to \sH^{2,2}_e(F_\alpha) \subset L^2_*(F_\alpha).
	\end{split}
	\end{equation}

\section{Sobolev spaces on abstract cones and edges}\label{scales-section}

In this section we review the results on abstract Sobolev spaces obtained in our previous work 
\cite{HLV17}, see also \cite{BL1}. The setting of an abstract edge is 
motivated by \Eqref{laplace}. Let $H$ be a Hilbert space and let $Q(s)$ be a 
family 
of self-adjoint operators in $H$, with domains $\dom(Q(s))$, parametrized by 
$s \in \R^b$. 
Consider the Hilbert scale $H^n(Q(s)):= \dom (Q(s)^n)$ for $n \in \N_0$,\eg 
$H= H^0(Q(s))$ and $\dom(Q(s)) = H^1(Q(s))$. A detailed exposition on Hilbert 
scales is 
given in our previous work \cite[\S 3]{HLV17}. We write 
\begin{equation}
H^\infty(Q(s)) := \bigcap_{n=0}^\infty H^n(Q(s)).
\end{equation}

\begin{definition}
We call a linear operator $P: H^\infty(Q(s))  \to H^\infty(Q(s))$ an operator of order $q$
if $P$ admits a formal adjoint $P^t$ and for any $s\in \R^b$ and any $n 
\in \N_0$, both $P$ and $P^t$ 
extend continuously to $H^{n+q}(Q(s))  \to H^{n}(Q(s))$ . 
\end{definition}

\begin{remark} \ \\[-5mm]
\begin{enumerate}
\item Linear operators of a given order on a Hilbert scale are discussed in detail in 
our previous work \cite[Definition 3.6]{HLV17}.
\item Clearly, $Q(s)$ is a linear operator of order $1$.
\item Below we are interested in Hilbert scales up to $H^2(Q(s))$, and hence if $P$ and 
$P^t$ map $H^{n+q}(Q(s))  \to H^{n}(Q(s))$ only for $n+q\leq 2$, we still refer to 
$P$ as a linear operator operator of order $q$.
\end{enumerate}
\end{remark}

We define a self-adjoint operator in $H$ with domain 
$H^2(Q(s))$ by
\begin{equation}\label{A-definition}
A(s) := Q(s) (Q(s)+1) +\frac{1}{4}=\Bl Q(s)+\frac{1}{2}\Br^2.
\end{equation}
We proceed under the following assumption,
motivated by Theorem \ref{thm-domains}.

\begin{assumption}\label{iterative-assumption} \ \\[-5mm]
\begin{enumerate}
\item $Q(s)$ is a family of discrete self-adjoint operators in $H$ 
with domain $\dom(Q)$ independent of the parameter $s \in \R^b$, i.e.
$H^1 := H^1(Q(s))$ is independent of $s$. Moreover the map 
$\R^b \ni s \mapsto Q(s)\in \mathcal{L}(H^1,L^2)$ is smooth.

\item $A(s)$ is a smooth family of discrete self-adjoint operators in $H$ 
with domain $\dom(A)$ independent of the parameter $s \in \R^b$,  i.e.
$H^2 := H^2(Q(s))$ is independent of $s$. Analogously, the map
$\R^b \ni s \mapsto A(s)\in \mathcal{L}(H^2,L^2)$ is smooth.
\end{enumerate}
\end{assumption}

We observe that the assumptions about smoothness in (i) and (ii) imply that 
$Q(s)$ and $A(s)$ are bounded maps locally uniform in 
the parameter $s$.

We also introduce Sobolev spaces on an abstract cone and edge. 
Consider the Sobolev-spaces $\sH^n_e(\R_+)$, defined as in Definition \ref{Sobolev-spaces}.  
In view of \cite[Definition 4.2]{HLV17}, we introduce, using the completed Hilbert space tensor product 
$\widehat{\otimes}$, Sobolev spaces on abstract cones and abstract edges.

\begin{definition} Let $s \in \R^b$ be fixed and $n\in \N_0$.
	\begin{enumerate}
		\item[a)] The Sobolev-space $W^n(\R_+,H)$ of an abstract 
		cone is defined by
		\begin{equation}\label{Sobolev-space-definition}
		W^n(\R_+,H):=(\sH^n_e(\R_+)\widehat{\otimes} H) \cap 
		(L^2(\R_+)\widehat{\otimes} H^n(Q(s))).
		\end{equation}
	
		\item[b)] The Sobolev-space $W^n(\R_+\times \R^b, H)$ of an 
		abstract edge is defined by
		\begin{equation}
		W^n(\R_+\times\R^b,H):=(\sH^n_e(\R_+\times\R^b)\widehat{\otimes} H) \cap 
		(L^2(\R_+\times \R^b)\widehat{\otimes}H^n(Q(s))).
		\end{equation}
	\end{enumerate}
\end{definition}

The Sobolev spaces are defined in terms of the Hilbert scale $H^n\equiv H^n(Q(s))$, which a
priori depend on the base point $s \in \R^b$. However, by Assumption \eqref{iterative-assumption}, 
the Hilbert spaces $H^1(Q(s))$ and $H^2(Q(s))$, and hence also the Sobolev-spaces $W^1$ and $W^2$, 
both on an abstract cone as well as an abstract edge, are independent of 
the parameter $s$. 

\begin{definition}
	We denote by $X$ 
the multiplication operator by $x \in \R_+$.
Then the weighted Sobolev-spaces are defined by
	\begin{equation}
	W^{n,\delta}(\R_+,H):= X^\delta W^n(\R_+,H), \quad L^2(\R_+,H):= 
	W^{0,0}(\R_+,H).
	\end{equation}
\end{definition}

We define subspaces of functions with compact support
 \begin{equation*}
 \begin{split}
 &W^\bullet_{\rm comp}(\R_+, H):= \{\phi u \mid  
 u \in W^\bullet (\R_+, H), \phi \in C^\infty_0[0,\infty)\}, \\
 &W^\bullet_{\rm comp}(\R_+\times \R^b, H):= \{\phi u \mid   
 u \in W^\bullet (\R_+\times \R^b, H), \phi \in C^\infty_0([0,\infty) \times 
 \R^b)\}.
 \end{split}
 \end{equation*}
We write $L^2_{\rm comp}(\cdot):= W^0_{\rm comp}(\cdot)$, where $(\cdot)$ 
represents $(\R_+, H)$ or $(\R_+\times \R^b, H)$.

\section{Resolvent of the Bessel operator on an abstract cone}\label{Bessel-section}

We continue in the notation of \S \ref{scales-section}.
In this section we review and extend the statements of \cite[Proposition 
6.2]{HLV17}
in our previous work. 

\begin{definition}
The Bessel operator on an abstract cone is defined by 
\begin{equation}
	\ell(s):= -\frac{d^2}{dx^2} +X^{-2} A(s):
	W^{2, 2}(\R_+,H)\to L^2(\R_+,H).
\end{equation}
\end{definition}

\begin{proposition}\label{small}
We impose Assumption \ref{iterative-assumption}
and assume additionally that $A(s) > \frac{9}{4}$ for all $s \in \R^b$.
Then the operator 
\begin{equation}\label{Bessel}
	\ell(s) + z^2:
	(W^{2, 2}\cap L^2)(\R_+, H) \to L^2(\R_+,H),
\end{equation}
is bijective and admits for $z>0$ a uniformly bounded inverse 
\begin{equation}\label{Bessel-inverse}
	G(s,z)  \equiv \left( \ell(s) + z^2 \right)^{-1}:
	L^2(\R_+,H) \to (W^{2, 2}\cap L^2)(\R_+,H),
\end{equation}
with norm on the intersection given by the sum of the two individual norms.
In particular, there exists a constant $C>0$
\begin{equation}
	\| G(s,z) \|_{L^2 \to L^2} \leq C \cdot z^{-2},
\end{equation}
where $C$ is locally independent of $s\in\R^b$.
\end{proposition}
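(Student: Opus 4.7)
The strategy is to diagonalize the tangential operator $A(s)$ at fixed $s \in \R^b$ and reduce the problem to a direct sum of scalar Bessel resolvents, whose kernels are classical. By Assumption \ref{iterative-assumption} and the additional hypothesis $A(s) > 9/4$, the operator $A(s)$ is discrete self-adjoint on $H$ with spectrum in $(9/4, \infty)$. Pick an orthonormal eigenbasis $\{e_k\}_{k\ge 1}$ of $A(s)$ with eigenvalues $\lambda_k(s) > 9/4$ and set $\mu_k := \sqrt{\lambda_k(s) + 1/4}$, so that $\mu_k > \sqrt{10}/2 > 1$. The decomposition $L^2(\R_+, H) = \bigoplus_k L^2(\R_+)\, e_k$ splits $\ell(s) + z^2$ as an orthogonal direct sum $\bigoplus_k (\ell_k + z^2)$ of scalar Bessel operators
\[
\ell_k = -\frac{d^2}{dx^2} + \frac{\mu_k^2 - 1/4}{x^2}
\]
of index $\mu_k > 1$.

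For each such mode, the index bound $\mu_k > 1$ puts $\ell_k$ in the limit-point case at $x = 0$, so $\ell_k$ is essentially self-adjoint on $C^\infty_0(\R_+)$ with non-negative spectrum, and its resolvent is the integral operator with Macdonald--Bessel kernel
\[
G_k(x,y;z) = \sqrt{xy}\, I_{\mu_k}(z x_<)\, K_{\mu_k}(z x_>).
\]
Positivity of $\ell_k$ and the spectral theorem yield $\|(\ell_k + z^2)^{-1}\|_{L^2 \to L^2} \le z^{-2}$, uniformly in $k$. Taking the orthogonal direct sum over $k$ defines $G(s,z)$ on $L^2(\R_+, H)$ with the claimed $s$-independent bound $\|G(s,z)\|_{L^2 \to L^2} \le z^{-2}$, and produces a two-sided inverse for $\ell(s) + z^2$ on the appropriate domains.

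It remains to show that $G(s,z)$ maps $L^2(\R_+, H)$ boundedly into $(W^{2,2} \cap L^2)(\R_+,H)$, which by the definition of $W^{2,2}$ amounts to showing that for $u = G(s,z) f$ the two quantities $X^{-2} A(s) u$ and $u''$ lie separately in $L^2(\R_+, H)$, not only their sum $\ell(s) u = f - z^2 u$. I would carry this out mode by mode using the explicit kernel $G_k$: a direct integral estimate of $\int |G_k(x,y;z) f_k(y)|^2\, dy$, weighted by $\mu_k^4/x^4$, gives $\|\mu_k^2 x^{-2} u_k\|_{L^2} \le C \|f_k\|_{L^2}$ with $C$ uniform in $\mu_k \in (\sqrt{10}/2, \infty)$; the companion bound on $u_k''$ then follows from the scalar equation. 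Summing over $k$ produces the full $W^{2,2}$ estimate, and bijectivity is an immediate consequence.

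The $L^2 \to L^2$ estimate is essentially free from the spectral theorem once the spectral decomposition is in place, so the technical heart lies in the $W^{2,2}$ regularity step: separating the two competing positive terms $-u''$ and $X^{-2} A(s) u$ in $L^2$, uniformly in $s$ and in the mode index $k$. The spectral gap hypothesis $A(s) > 9/4$ is exactly what keeps the indices $\mu_k$ bounded away from the critical value $1$, which is needed both to remain in the limit-point regime at $x = 0$ and to obtain the uniform Bessel-function bounds underlying the separation estimate.
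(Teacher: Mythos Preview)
Your approach is correct but differs from the paper's. The paper does not perform the spectral decomposition here; it imports the uniform $L^2 \to W^{2,2}$ bound for $G(s,z)$ wholesale from \cite[Proposition~6.2]{HLV17} and then derives the $L^2 \to L^2$ estimate by a one-line bootstrap: from the identity $\ell(s)\circ G(s,z) = \mathrm{Id} - z^2 G(s,z)$ and the boundedness of $\ell(s)\colon W^{2,2}\to L^2$ one reads off $\|G(s,z)\|_{L^2\to L^2} = z^{-2}\|\mathrm{Id} - \ell(s)\circ G(s,z)\|_{L^2\to L^2} \le C z^{-2}$. Your mode-by-mode route via the explicit Macdonald--Bessel kernel is essentially what underlies the cited result in \cite{HLV17}, so you are reconstructing that argument rather than quoting it. Your $L^2$ bound is in fact cleaner: the spectral theorem gives the sharp constant $C=1$, whereas the paper's bootstrap produces a constant depending on the $W^{2,2}$ bound. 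Conversely, the $W^{2,2}$ separation estimate you only sketch---$\|\mu_k^2 x^{-2}u_k\|_{L^2} \le C\|\ell_k u_k\|_{L^2}$ uniformly in $k$---is exactly the technical content the paper offloads to the citation; it does hold (via a weighted Hardy inequality, which is where the gap condition $A(s)>9/4$ enters), but it is the genuine work, and you should be aware that it is not quite as immediate as the $L^2$ step.
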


\begin{proof} We write $L^2= L^2(\R_+,H)$ and $W^{2,2} = W^{2,2}(\R_+,H)$.
Injectivity of $\ell(s) + z^2$ on $W^{2,2}$ and existence of a right-inverse $G(s,z): L^2 \to W^{2,2}$, 
bounded uniformly in the parameters $(s,z) \in \R^b \times \R$, 
is established in our previous work \cite[Proposition 6.2]{HLV17}. To conclude that $G(s,z)$ in fact maps 
into $L^2$ for $z>0$, note that 
\begin{equation}
	\ell(s) \circ G(s,z) = \textup{Id} - z^2 \cdot G(s,z).
\end{equation}
Since $\ell(s): W^{2,2} \to L^2$ is a bounded operator, $\ell(s) \circ 
G(s,z)$ and $\textup{Id}$ are
bounded operators on $L^2$, bounded locally uniformly in the parameters 
$(s,z) \in \R^b \times \R$.
Hence for $z >0$, the right-inverse $G(s,z)$ is a bounded operator on $L^2$ and thus
maps $L^2$ to $W^{2, 2}\cap L^2$. Hence $\ell(s) + z^2: W^{2, 2}\cap L^2 \to L^2$ is bijective with 
inverse $G(s,z)$. The estimate of the $L^2$-norm follows by
\begin{equation}
	\| G(s,z) \|_{L^2 \to L^2} = z^{-2} \| \textup{Id} - \ell(s) \circ G(s,z) \|_{L^2\to L^2}
	\leq C \cdot z^{-2},
\end{equation}
where $C$ is locally independent on $s\in \R^b$.
\end{proof}

Our next result extends a well-known observation from classical elliptic 
calculus to the setting of abstract cones. 

\begin{proposition}\label{elliptic-cone}
We impose Assumption \ref{iterative-assumption}
and assume additionally that $A(s) > \frac{9}{4}$ for all $s \in \R^b$.
Consider smooth bounded non-negative functions $\phi, \psi \in C^\infty[0,\infty)$ such that
\begin{align*}
\supp \phi \cap \supp \psi = \varnothing.
\end{align*}
We assume that at least one of the cutoff functions has compact support in $[0,\infty)$.
We write $\Phi, \Psi$ for the multiplication operators by $\phi$ and $\psi$, respectively.
Consider a set $\{P_1(s), \ldots , P_\alpha(s)\}, \alpha \in \N,$ of smooth families 
of linear operators on the Hilbert scale $H^*(Q(s))$ of order at most $2$. Then for any 
$N \in \N$ there exists a constant $C_{\alpha, N} >0$, 
locally independent of $s\in \R^b$, such that for all $z>0$
\begin{equation}
	\left\| \Phi \circ \left(\prod\limits_{j=1}^\alpha P_j(s) \circ G(s,z)\right) \circ \Psi \right\|_{L^2 \to L^2} \leq C_{\alpha, N} \cdot z^{-N}.
	\end{equation}
\end{proposition}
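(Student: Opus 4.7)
The plan is an iterated commutator argument that exploits two structural facts: $\Phi$ is multiplication by a function of $x$ alone, so it commutes with every fiberwise operator $P_j(s)$ and with the potential $X^{-2}A(s)$; and $\Phi\Psi=0$, so any term in which $\Phi$ is commuted all the way to the right against $\Psi$ vanishes. The resulting key commutator is
\begin{equation*}
[\ell(s),\Phi] \;=\; -[\partial_x^2,\phi] \;=\; -2\phi'(x)\,\partial_x - \phi''(x),
\end{equation*}
a first-order differential operator in $x$ whose smooth coefficients are supported in $\supp \phi$ and hence still disjoint from $\supp \psi$. Combined with the resolvent identity $[\Phi,G(s,z)]=G(s,z)\,[\ell(s),\Phi]\,G(s,z)$, this will furnish one factor of $z^{-1}$ per commutator round.

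\textbf{Key estimates.} First I would collect the mapping properties supplied by Proposition~\ref{small}: $\|G(s,z)\|_{L^2\to L^2}=O(z^{-2})$ and $\|G(s,z)\|_{L^2\to W^{2,2}}=O(1)$, both locally uniformly in $s\in\R^b$. Real interpolation between these two bounds yields
\begin{equation*}
\|\partial_x G(s,z)\|_{L^2\to L^2} \;=\; O(z^{-1}),
\end{equation*}
which is the crucial intermediate estimate, since $[\ell,\Phi]\,G$ decomposes into $-2\phi'\,\partial_x G$ of norm $O(z^{-1})$ and $-\phi''\,G$ of norm $O(z^{-2})$. Moreover each $P_j(s)G(s,z)\colon L^2\to L^2$ is bounded (with no $z$-decay), because $G$ maps into $W^{2,2}\subseteq L^2(\R_+,H^2(Q(s)))$ and $P_j$ is of order at most $2$ on the Hilbert scale.

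\textbf{Iteration.} Because $[\Phi,P_j]=0$ and $\Phi\Psi=0$, Leibniz-expansion of $T:=\Phi\,P_1G\cdots P_\alpha G\,\Psi$ produces a finite sum in which each summand contains exactly one insertion of $[\ell,\Phi]$ sandwiched between two factors of $G$, with no surviving boundary term. By the bounds above each such summand has $L^2\to L^2$ norm $O(z^{-1})$. To promote this to $O(z^{-N})$, I would choose a nested family $\phi=\chi_0,\chi_1,\dots,\chi_N$ of smooth bounded cutoffs with $\chi_{k-1}=\chi_{k-1}\chi_k$ and $\chi_k\psi=0$ for every $k$; the hypothesis that at least one of $\phi,\psi$ is compactly supported guarantees that such a family can be chosen inside $[0,\infty)\setminus\supp\psi$. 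A short computation gives $[\ell,\chi_{k-1}]=[\ell,\chi_{k-1}]\chi_k$ as operators, so after each commutator round I may insert $\chi_k$ immediately to the right of $[\ell,\chi_{k-1}]G$ and then apply the commutator identity once more to $\chi_k G$. After $N$ such rounds $T$ is a finite sum of expressions of the schematic form
\begin{equation*}
R_0\,[\ell,\chi_{k_1}]\,G\,R_1\,[\ell,\chi_{k_2}]\,G\,R_2\cdots [\ell,\chi_{k_N}]\,G\,R_N\,\Psi,
\end{equation*}
where each $R_i$ is a product of $P_jG$-blocks bounded on $L^2$, while each of the $N$ insertions $[\ell,\chi_{k_i}]G$ contributes $O(z^{-1})$. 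Summing the finitely many resulting terms gives $\|T\|_{L^2\to L^2}\leq C_{\alpha,N}\,z^{-N}$ with constant locally independent of $s$.

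\textbf{Main obstacle.} The principal difficulty is the function-space bookkeeping across the iteration: at each intermediate stage one has to check that the block between two consecutive commutator insertions is $L^2$-bounded. The $P_jG$ factors require the full strength of $G\colon L^2\to W^{2,2}$, whereas the $z^{-1}$-gain is supplied exclusively by the interpolated gradient bound on each $\partial_x G$, so these two kinds of estimates must be interleaved carefully and never both consumed on the same $G$. Uniformity of the constants in $s\in\R^b$ follows from the smoothness hypotheses in Assumption~\ref{iterative-assumption}, and the compact-support condition on one of $\phi,\psi$ is exactly what permits the nested cutoff family $\{\chi_k\}$ to be constructed without loss of compactness at infinity.
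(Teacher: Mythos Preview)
Your commutator argument is correct and constitutes a genuinely different proof from the one in the paper. The paper proceeds by decomposing $G(s,z)$ spectrally over the eigenspaces $E_{\nu(s)}$ of $A(s)$ and invoking the explicit Bessel kernel $\sqrt{xy}\,K_\nu(xz)I_\nu(yz)$; sharp monotonicity bounds for $K_\nu$ (Baricz) together with Olver's uniform large-$\nu$ asymptotics yield the pointwise estimate $|\nu^2(\Phi G_\nu(z)\Psi)(x,y)|\le C_N(\nu z)^{-N}$, from which the $\alpha=1$ case follows after composing with $P_1(s)A(s)^{-1}$. The step from $\alpha-1$ to $\alpha$ is then handled by inserting a \emph{single} intermediate cutoff $\chi$ with $\supp\chi\cap\supp\phi=\varnothing$ and $\supp(1-\chi)\cap\supp\psi=\varnothing$. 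Your route avoids special functions entirely: the resolvent identity $[\Phi,G]=G[\ell,\Phi]G$, the vanishing $\Phi\Psi=0$, and the nested family $\chi_0,\dots,\chi_N$ produce $N$ insertions of $[\ell,\chi_k]G$, each worth $O(z^{-1})$. This is more elementary and transparently robust under perturbations of $\ell$, while the paper's approach buys sharper information (simultaneous decay in $\nu$ and $z$, pointwise kernel control) consistent with its explicit, hands-on methodology.

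One technical caveat: the bound $\|\partial_x G(s,z)\|_{L^2\to L^2}=O(z^{-1})$ is correct, but ``real interpolation between $L^2$ and $W^{2,2}$'' is not the cleanest justification here, since identifying the interpolation space requires work. It is simpler to use the quadratic form directly: for $v=G(s,z)u$ one has
\[
\|\partial_x v\|^2+\langle X^{-2}(A(s)-\tfrac14)v,v\rangle+z^2\|v\|^2=\langle u,v\rangle\le \|u\|\,\|v\|\le C z^{-2}\|u\|^2,
\]
and positivity of the middle term (since $A(s)>\tfrac94$) gives $\|\partial_x G(s,z)u\|\le Cz^{-1}\|u\|$. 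With this in hand your bookkeeping goes through exactly as you describe; the compact-support hypothesis on one of $\phi,\psi$ is used precisely to guarantee $\mathrm{dist}(\supp\phi,\supp\psi)>0$, which is what allows the nested cutoffs $\chi_k$ to be chosen.
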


\begin{proof}
We will prove the result by induction on $\alpha$. First consider $\alpha = 
1$. Let $E_{\nu(s)}$ denote the $\nu(s)^2$-eigenspace of $A(s)$, where by 
assumption $\nu(s) > \frac{3}{2}+\delta$ for some $\delta>0$. By discreteness 
of $A(s)$ we may decompose 
\begin{equation}
\ell(s) + z^2 = \bigoplus\limits_{\nu(s)} \left(- \frac{d^2}{dx^2} + X^{-2} \left(\nu(s)^2 - \frac{1}{4}\right) + z^2 \right)
=: \bigoplus\limits_{\nu(s)} \left(\ell_{\nu(s)} + z^2 \right), 
\end{equation}
where each individual component acts as follows
\begin{equation}
\ell_{\nu(s)} + z^2: (W^{2, 2}\cap L^2)(\R_+, E_{\nu(s)}) \to 
L^2(\R_+,E_{\nu(s)}).
\end{equation}
Similarly, the inverse $G(s,z)$ decomposes accordingly
\begin{equation}
G(s,z) = \bigoplus\limits_{\nu(s)} \left(\ell_{\nu(s)} + z^2 \right)^{-1} =: \bigoplus\limits_{\nu(s)} G_{\nu(s)}(z). 
\end{equation}
The Schwartz kernel of $\Phi \circ G_{\nu}(z) \circ \Psi$ is given 
explicitly,\cf \cite[(6.15)]{HLV17}, in terms of 
modified Bessel functions by (for $x \geq y$)
\begin{equation}
\left( \Phi \circ G_{\nu}(z) \circ \Psi \right)(x,y) = \phi(x) \sqrt{xy} \, K_{\nu}(xz) I_{\nu}(yz) \psi(y).
\end{equation}
We want to estimate the integral kernel $\nu^2 \left( \Phi \circ G_{\nu}(z) \circ \Psi \right)(x,y)$
uniformly in its variables $(x,y)$ and parameters $(\nu, z)$. Note that since our estimates are 
uniform in $\nu$, they in particular hold for the special values $\nu = \nu(s)$ regardless of $s$
and lead to an estimate of the operator 
norm of $\Phi \circ P_1(s) \circ G(s,z) \circ \Psi$ uniformly in $s$. 

By the symmetry of the Schwartz kernel of $G(s,z)$, let us assume without loss of generality that 
$x>y$ for any $x\in \supp \phi$ and $y \in \supp \psi$. 
Since support of both cutoff functions is compact and disjoint, there exists $a>0$
such that $x-y>a$ for any $x\in \supp \phi$ and $y \in \supp \psi$. Since by assumption, 
at least one of the cutoff functions $\phi, \psi$ has compact support, $\supp \psi \subset [0,\infty)$ is bounded
and hence there exists a constant $c>1$ such that $x/y \geq c$ for any $x\in \supp \phi$ and $y \in \supp \psi$. 
We fix such constants $a>0$ and $c>1$ below. 

For the estimates below we need monotonicity properties of modified Bessel functions, where
\eg Baricz \cite{Baricz} provides a detailed account of the various 
monotonicity properties and bounds
obtained in the literature. In particular, \cite[(3.2) and (3.4)]{Baricz} assert
\begin{align}
e^{z(x-y)} < \frac{K_\nu(y z)}{K_\nu(x z)}, \quad 
\left(\frac{x}{y}\right)^\nu < \frac{K_\nu(y z)}{K_\nu(x z)}. 
\end{align}
Combination of these two bounds yields (note that $K_\nu > 0$)
\begin{align}
K_\nu(x z) = K_\nu(x z)^{\frac{2}{3}} K_\nu(x z)^{\frac{1}{3}} <  
K_\nu(yz) \cdot \left(\frac{y}{x}\right)^{\frac{2\nu}{3}}  \cdot e^{-z\frac{(x-y)}{3}}. 
\end{align}
Since the cutoff functions $\phi, \psi$ are non-negative, and the modified Bessel functions $I_\nu, K_\nu$
are non-negative either, there exists for any $N \in \N$ some constant $C_N>0$, depending only on $\|\phi\|_\infty, 
\|\psi\|_\infty$ as well as on $a, c$ and $N$, such that (recall that $\nu > \frac{3}{2}+\delta$ for some $\delta>0$)
\begin{equation}\label{monotonicity}
\begin{split}
\Bigl| \nu^2 \left( \Phi \circ G_{\nu}(z) \circ \Psi \right)&(x,y) \Bigr|
= \nu^2 \phi(x) \sqrt{xy} \, K_{\nu}(xz) I_{\nu}(yz) \psi(y) \\
&\leq \|\phi\|_\infty \|\psi\|_\infty \left( \nu^2 \left(\frac{y}{x}\right)^{\frac{2\nu}{3}}  \cdot e^{-z\frac{(x-y)}{3}} \right) x \, K_{\nu}(yz) I_{\nu}(yz)
\\ &\leq \|\phi\|_\infty \|\psi\|_\infty \left( \nu^2 \left(\frac{y}{x}\right)^{\frac{2\nu}{3}-1}  \cdot e^{-z\frac{(x-y)}{3}} \right) y \, K_{\nu}(yz) I_{\nu}(yz)
\\ &\leq C_N (\nu z)^{-N} y \, K_{\nu}(yz) I_{\nu}(yz).
\end{split}
\end{equation}
We simplify notation by writing $t:= \frac{yz}{\nu}$. Following Olver \cite[p. 377 (7.16), (7.17)]{Olv:AAS}, 
we note the asymptotic expansions for the modified Bessel functions
\begin{equation}\label{EqBesselestimates}
\begin{split}
I_\nu(\nu t) &\sim \frac{e^{\nu \cdot \eta(t)}}{(2\pi \nu)^{1/2} (1+t^2)^{1/4}} 
\left(1+ \sum_{k=1}^\infty \frac{U_k(p)}{\nu^k}\right), \\
K_\nu(\nu t) &\sim \sqrt{\frac{\pi}{2\nu}}\frac{e^{-\nu \cdot \eta(t)}}{(1+t^2)^{1/4}} 
\left(1+ \sum_{k=1}^\infty \frac{U_k(p)}{(-\nu)^k}\right), 
\end{split} \ \textup{as} \ \nu \to \infty,
\end{equation}
where
\begin{equation}\label{pnu}
\begin{split}
&\eta = \eta(t) := \sqrt{1+t^2} + \log (t/(1+\sqrt{1+t^2})), \\
&p = p(t) := 1/\sqrt{1+t^2},
\end{split}
\end{equation}
and the coefficients $U_k(p)$ are polynomials in $p$ of degree $3k$. 
By \cite[(A.18)]{HLV17} these expansions are uniform in 
$t\in(0,\infty)$. From here we conclude for some constant $C>0$
\begin{align*}
y \, K_{\nu}(yz) I_{\nu}(yz) = \frac{t \nu}{z} K_\nu(\nu t)  I_\nu(\nu t)  
\leq C \frac{t}{z \sqrt{1+t^2}} \leq C z^{-1}.
\end{align*}
We have now proved in view of \eqref{monotonicity}
\begin{equation}\label{3-estimate}
\Bigl| \nu^2 \, \left( \Phi \circ G_{\nu}(z) \circ \Psi \right)(x,y) \Bigr| \leq C_N 
(\nu z)^{-N},
\end{equation}
for some constant $C_N$ depending only on $N$ and the cutoff functions $\psi, \phi$.
Since the estimate is uniform in $\nu>0$, we conclude 
\begin{equation}
\| \left( \Phi \circ A(s) \circ G(s,z) \circ \Psi \right)(x,y) \|_{H \to H}
 \leq C_N z^{-N},
\end{equation} 
From here it follows easily that 
\begin{equation}
\| \Phi \circ A(s) \circ G(s,z) \circ \Psi \|_{L^2 \to L^2} \leq C_N z^{-N}.
\end{equation}
The general case of $\alpha = 1$ is now obtained as follows. 
\begin{equation}\begin{split}
&\| \Phi \circ P_1(s) \circ G(s,z) \circ \Psi \|_{L^2 \to L^2} \\ &= 
\left\| \Phi \circ \left(P_1(s) \circ A(s)^{-1}\right) \circ A(s) \circ G(s,z) \circ \Psi \right\|_{L^2 \to L^2}
\\ &\leq C \| \Phi \circ A(s) \circ G(s,z) \circ \Psi \|_{L^2 \to L^2} \leq C_N z^{-N}.
\end{split}
\end{equation}

\subsection*{IV. Full estimate for general $\alpha$}
In order to extend the
statement to general $\alpha \in \N$, we proceed by induction and assume the statement holds for 
$n\leq \alpha -1$. Consider a bounded smooth function $\chi \in C^\infty[0,\infty)$
such that 
\begin{equation}
\begin{split}
&\supp \chi \cap \supp \phi = \varnothing, \\
&\supp (1-\chi) \cap \supp \psi = \varnothing
\end{split}
\end{equation}
An example of the relation between $\phi$, $\psi$ and $\chi$ is illustrated 
in Figure \ref{phi-psi-chi}.

\begin{figure}[h]
	 	\begin{center}
	 		
	 		\begin{tikzpicture}[scale=1.3]
	 		\draw[->] (-0.2,0) -- (8.7,0);
	 		\draw[->] (0,-0.2) -- (0,2.2);
	 		
	 		\draw (-0.2,2) node[anchor=east] {$1$};
	 		\draw (8.7,0) node[anchor=north] {$x$};
	 		\draw (0,-0.45) node {$0$};
	 		\draw[dashed] (0,2) -- (7.5,2);
			\draw (0,2) -- (3,2);
			\draw (7.5,2) -- (7.7,2);
	 		\draw (0.5,2) .. controls (1.9,2) and (1.1,0) .. (2.5,0);
	 		\draw[dashed] (0.5,-0.2) -- (0.5,2.2);
	 		\draw[dashed] (2.5,-0.2) -- (2.5,2.2);
	 		\draw (3,2) .. controls (4.4,2) and (3.6,0) .. (5,0);
	 		\draw[dashed] (3,-0.2) -- (3,2.2);
	 		\draw[dashed] (5,-0.2) -- (5,2.2);
	 		\draw (5.5,0) .. controls (6.9,0) and (6.1,2) .. (7.5,2);
	 		\draw[dashed] (5.5,-0.2) -- (5.5,2.2);
	 		\draw[dashed] (7.5,-0.2) -- (7.5,2.2);
	 		
	 		\draw (1,1) node {$\psi$};
	 		\draw (3.6,1) node {$\chi$};
	 		\draw (6,1) node {$\phi$};
	 		
	 		\end{tikzpicture}
	 		
	 		\caption{The cutoff functions $\psi$, $\chi$ and $\phi$.}
	 		\label{phi-psi-chi}
	 	\end{center}
	 \end{figure}

We write the multiplication operator with $\chi$ by $\chi$ again and obtain 
\begin{equation}
\begin{split}
&\Phi \circ \left(\prod\limits_{j=1}^\alpha P_j(s) \circ G(s,z)\right) \circ \Psi \\ &= \Bigl( \Phi \circ 
\left(\prod\limits_{j=1}^{\alpha-1} P_j(s) \circ G(s,z)\right)  \circ \chi\Bigr) 
\circ \Bigl(P_\alpha(s) \circ G(s,z)  \circ \Psi \Bigr) 
\\ &+ \Bigl( \Phi  \circ \left(\prod\limits_{j=1}^{\alpha-1} P_j(s) \circ G(s,z)\right) \Bigr) 
\circ \Bigl( (1-\chi) \circ P_\alpha(s) \circ G(s,z) \circ  \Psi \Bigr) .
\end{split}
\end{equation}
By the induction hypothesis, the operator norms of 
\begin{equation}
 \Phi \circ \left(\prod\limits_{j=1}^{\alpha-1} P_j(s) \circ 
G(s,z)\right)  \circ \chi \;\;\; \text{and} \;\;\;
(1-\chi) \circ P_\alpha(s) \circ G(s,z) \circ  \Psi
\end{equation}
are bounded by $C_N z^{-N}$ for any $N$, while the
other components 
\begin{equation}
P_\alpha(s) \circ G(s,z)  \circ \Psi \qquad\text{and}\qquad
 \Phi  \circ \left(\prod\limits_{j=1}^{\alpha-1} P_j(s) \circ G(s,z)\right) 
\end{equation}
are bounded on $L^2$. Thus the statement holds for any $\alpha \in \N$.
\end{proof}

\begin{remark}
We should point out that this result follows from the classical parameter elliptic calculus
with operator valued symbols only in case of $\supp \psi \subset (0,\infty)$. Since we only 
assume $\psi \in C^\infty_0[0,\infty)$, we need to prove the result explicitly using the
explicit structure of the operators at zero. The result may also deduced along the lines of \cite[Section 3]{Les13}.
\end{remark}

\section{Resolvent of a Laplace operator on an abstract edge}\label{parametrix-section}

We continue in the notation of \S \ref{Bessel-section}.
In this section we review and extend the statements of \cite[Theorem 
7.3]{HLV17}.

\begin{definition}\label{Delta-edge}
We define a Laplace operator on an abstract edge by
\begin{equation*}
\begin{split}
	L &= -\frac{d^2}{dx^2} +X^{-2} A(s) + \Delta_{\R^b,s}:
	W^{2,2}(\R_+ \times \R^b, H) \to L^2(\R_+ \times \R^b, H),
	\end{split}
\end{equation*}
where $\Delta_{\R^b,s}$ is an elliptic Laplace-type operator, acting on $C^\infty_0(\R^b, H)$,
with scalar principal symbol $|\sigma|^2_s$. Here, $|\cdot |_s$ is a family 
of 
norms 
on $\R^b$, smooth in the parameter $s \in \R^b$.
\end{definition}
For fixed $s_0\in\R^b$ we denote by $L(s_0)$ the operator obtained from $L$ 
by fixing the coefficients at $s_0$.

\begin{proposition}\label{parametrix-bounded-0}
Consider $u\in C_0^\infty(\R_+\times \R^b,H^\infty)$ and denote its Fourier 
	transform on $\R^b$ by $\hat{u} (\sigma)$. For a fixed $s_0\in \R^b$ 
	assume that $A(s_0)>\frac{9}{4}$. Then the operator $(L(s_0) + z^2): 
	(W^{2,2}\cap L^2)(\R_+\times \R^b,H) 
	\to L^2(\R_+\times \R^b,H)$ is invertible with inverse 
\begin{equation}\begin{split}
\mathcal{G}(s_0,z) u(s) &\equiv (L(s_0)+z^2)^{-1} u(s) \\ &:=\frac{1}{(2\pi)^b} 
\int_{\R^b} e^{i\langle s, \sigma \rangle} G(s_0, \sqrt{|\sigma|^2_{s_0} + 
z^2}) \widehat{u}(\sigma) d\sigma,
\end{split}
\end{equation}
which defines a bounded operator 
\begin{equation}
\mathcal{G}(s_0,z): L^2(\R_+ \times \R^b, H) \to (W^{2,2} \cap L^2) (\R_+ 
\times \R^b, H).
\end{equation}
Here $G$ is the resolvent defined in Section \ref{Bessel-section}.
\end{proposition}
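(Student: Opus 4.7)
The key point is that after freezing the coefficients of $\Delta_{\R^b,s}$ at $s_0$, the principal symbol becomes $|\sigma|^2_{s_0}$, which is a constant quadratic form on $\R^b$ (with no $s$-dependence). The plan is therefore to diagonalize $L(s_0)+z^2$ by partial Fourier transform in the edge variable $s$ and reduce to the already-solved Bessel problem of Proposition \ref{small} on each fiber.

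\textbf{Step 1: reduction via Fourier transform.} For $u\in C_0^\infty(\R_+\times\R^b,H^\infty)$, apply the Fourier transform $\widehat{\cdot}$ in $s$. Since $L(s_0)$ acts in the $\R^b$-direction only through $\Delta_{\R^b,s_0}$ (with frozen coefficients and scalar principal symbol $|\sigma|^2_{s_0}$), its conjugate under Fourier transform is
\begin{equation*}
\widehat{L(s_0)+z^2}=-\frac{d^2}{dx^2}+X^{-2}A(s_0)+\bl |\sigma|^2_{s_0}+z^2\br=\ell(s_0)+\zeta(\sigma,z)^2,
\end{equation*}
where $\zeta(\sigma,z):=\sqrt{|\sigma|^2_{s_0}+z^2}$. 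For each fixed $\sigma$ this is precisely the Bessel operator treated in Proposition \ref{small}, and since $\zeta(\sigma,z)>0$ whenever $z>0$, it is bijective from $(W^{2,2}\cap L^2)(\R_+,H)$ to $L^2(\R_+,H)$ with inverse $G(s_0,\zeta(\sigma,z))$ of norm bounded by $C\,\zeta(\sigma,z)^{-2}$, locally uniformly in $s_0$.

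\textbf{Step 2: definition of the candidate inverse and $L^2$-boundedness.} Define $\mathcal{G}(s_0,z)$ by the stated Fourier multiplier formula. I would first check the integral makes sense on the dense subspace $C_0^\infty(\R_+\times\R^b,H^\infty)$, where $\widehat{u}(\sigma)$ is Schwartz in $\sigma$ with values in a fixed $H^\infty$. Plancherel in the $s$-variable together with the pointwise $L^2_x\otimes H$-norm bound
\begin{equation*}
\|G(s_0,\zeta(\sigma,z))\widehat{u}(\sigma)\|_{L^2(\R_+,H)}\le C\,\zeta(\sigma,z)^{-2}\,\|\widehat{u}(\sigma)\|_{L^2(\R_+,H)}
\end{equation*}
then yields $\|\mathcal{G}(s_0,z)u\|_{L^2(\R_+\times\R^b,H)}\le Cz^{-2}\|u\|_{L^2(\R_+\times\R^b,H)}$, so $\mathcal{G}(s_0,z)$ extends by density to a bounded operator on $L^2(\R_+\times\R^b,H)$.

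\textbf{Step 3: mapping into $W^{2,2}\cap L^2$ and inversion.} The second-order derivatives that enter the $W^{2,2}$-norm are $\p_x^2$, $X^{-2}A(s_0)$, and the components of $\Delta_{\R^b,s_0}$. On the Fourier side these become $-d^2/dx^2$, $X^{-2}A(s_0)$, and multiplication by $|\sigma|^2_{s_0}$ respectively. For the first two I would write, analogously to the proof of Proposition \ref{small},
\begin{equation*}
\Bl -\frac{d^2}{dx^2}+X^{-2}A(s_0)\Br G(s_0,\zeta)=\mathrm{Id}-\zeta^2 G(s_0,\zeta),
\end{equation*}
which is uniformly bounded on $L^2(\R_+,H)$. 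For the third, $|\sigma|^2_{s_0}\,G(s_0,\zeta(\sigma,z))$ is bounded by $|\sigma|^2_{s_0}\cdot C\zeta(\sigma,z)^{-2}\le C$ uniformly in $\sigma$. Combining these three estimates with Plancherel shows that $\mathcal{G}(s_0,z)$ maps $L^2$ into $W^{2,2}\cap L^2$. Finally, $(L(s_0)+z^2)\mathcal{G}(s_0,z)=\mathrm{Id}$ and $\mathcal{G}(s_0,z)(L(s_0)+z^2)=\mathrm{Id}$ follow on the dense domain $C_0^\infty(\R_+\times\R^b,H^\infty)$ by the Fourier intertwining in Step 1, and then on $(W^{2,2}\cap L^2)$ by density and continuity.

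\textbf{Anticipated obstacle.} The routine part is the $L^2$-boundedness; the delicate point is showing $\mathcal{G}(s_0,z)$ lands inside the intersection $W^{2,2}\cap L^2$ with control of all three second-order pieces, together with the tangential $W^{1,1}$-type control, uniformly in the Fourier parameter $\sigma$. This requires that the Bessel resolvent estimate of Proposition \ref{small} be used not merely as an $L^2\to L^2$ bound but as a bound into the graph norm of $\ell(s_0)+\zeta^2$, with constants independent of $\zeta\in(z,\infty)$; the uniformity built into Proposition \ref{small} is exactly what supplies this.
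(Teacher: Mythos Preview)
Your approach is correct and is essentially the same as the paper's: both argue by partial Fourier transform in $s$, reduce fiberwise to the Bessel resolvent $G(s_0,\zeta)$ of Proposition~\ref{small}, and then invoke Plancherel together with the uniform $L^2\to (W^{2,2}\cap L^2)$ bound on $G$ to obtain the edge mapping property. The paper's proof is terse because it defers the Plancherel computation to \cite[Theorem 7.3]{HLV17} and only records that the new input from Proposition~\ref{small} (namely that $G(s_0,\zeta)$ lands in $W^{2,2}\cap L^2$, not merely $W^{2,2}$, with uniform bounds for $\zeta>0$) removes the compact-support restriction on the input; your write-up makes the same mechanism explicit.
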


\begin{proof} We just need to prove the mapping properties of $\mathcal{G}(s_0,z)$.
In our previous work \cite[Theorem 7.3]{HLV17} we prove that
$\mathcal{G}(s_0,z)$ maps $L^2_{\rm comp}(\R_+ \times \R^b, H)$ to $(W^{2,2} 
\cap L^2) (\R_+ \times \R^b, H)$,
for any $z \in \R$. For $z>0$, our result here follows ad verbatim for 
$\mathcal{G}(s_0,z)$ acting on 
$L^2(\R_+ \times \R^b, H)$ without assumptions on the support. This follows 
using the mapping 
properties (see \eqref{Bessel-inverse}) and Plancherel's Theorem as in
\cite[Theorem 7.3]{HLV17} and
\begin{equation}
	G(s_0,z): L^2(\R_+,H) \to (W^{2, 2}\cap L^2)(\R_+,H),
\end{equation}
as established in Proposition \ref{small}. 
\end{proof}

We want to extend this statement to a perturbation of $L(s_0)$
\begin{equation}\label{Eq-Lz}
\begin{split}
L + R &= -\b_x^2 + X^{-2} A(s) + \Delta_{\R^b,s} + R 
\\ &:=L(s_0) + R - R_{s_0},
\end{split}
\end{equation}
where the higher order terms $R_{s_0}$ and $R$ are defined by
\begin{equation}\label{perturbation}
\begin{split}
&R_{s_0}:= X^{-2}(A(s_0)-A(s))+ \Delta_{\R^b,s_0} - \Delta_{\R^b,s}, \\ 
&R:= \sum_{\alpha + |\beta| \, \leq \, 2} a_{\alpha \beta}(x, s) \circ 
X^{-1} \circ (x\partial_x)^\alpha \circ (x\partial_s)^\beta.
\end{split}
\end{equation}
Here the summation is over $\alpha\in\N_0$ and $\beta \in \N_0^b$.
The coefficient $a_{\alpha \beta}(x, s)$ is a smooth family of linear 
operators on the Hilbert scale $H^\bullet(Q(s_0))$
of order at most $(2-\alpha - |\beta|)$. 

\begin{theorem}\label{parametrix-bounded}
We impose the Assumption \ref{iterative-assumption} with $A(s) > \frac{9}{4}$ for all $s \in \R^b$.
For $\varepsilon>0$ we denote by $\chi\in C^\infty_0([0,\infty)\times \R^b)$ a 
cut-off function with compact support in $[0,2\varepsilon)\times 
B_{2\varepsilon}(s_0)$ such that $\chi \restriction [0,\varepsilon)\times 
B_\varepsilon(s_0) \equiv 1$. Consider
\begin{equation}
L + z^2 :=L_\chi + z^2 = L(s_0)+ \chi(R-R_{s_0})+z^2.
\end{equation} 
For $\varepsilon>0$ small enough and $z>0$ the operator
\begin{equation}
L+z^2 : (W^{2, 2}\cap L^2) 
(\R_+ \times \R^b, H)
\to L^2(\R_+ \times \R^b, H),
\end{equation}
is invertible with bounded inverse
\begin{equation}
\mathcal{G}(z)\equiv (L+z^2)^{-1}: L^2(\R_+ \times \R^b, H) \to W^{2, 2}\cap L^2 (\R_+ \times \R^b, H).
\end{equation}
Write $s = (s_1, \cdots , s_b) \in \R^b$ for the coordinates on $\R^b$.
We find moreover, that for any $J\in \N$ there exists $\varepsilon>0$ sufficiently small, such that 
for any multi-index $j = (j_1, \cdots, j_k) \in \{1,\ldots,b\}^k$ with $|j| \leq J$, the commutators
\footnote{We need the statement only for finitely many multi-indices, more precisely only for
$|j|\leq b$, so that the Trace Lemma in \cite[Lemma 4.3]{BS3} applies.}
\begin{equation}
C^{j}(\mathcal{G}(z)) := [ \partial_{s_{j_1}}, [ \partial_{s_{j_2}}, 
[ \, \cdots, [\partial_{s_{j_k}}, \mathcal{G}(z)] \cdots ]] 
\end{equation}
define bounded operators from $L^2$ to $W^{2, 2}\cap L^2$ as well.
\end{theorem}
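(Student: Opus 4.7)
The plan is to invert $L_\chi + z^2$ by writing it as a perturbation of the frozen-coefficient operator $L(s_0) + z^2$, whose inverse $\mathcal{G}(s_0,z)$ was obtained in Proposition~\ref{parametrix-bounded-0}. Concretely, one has the factorisation
\begin{equation*}
L_\chi + z^2 = (L(s_0) + z^2)\bigl(I + \mathcal{G}(s_0,z)\,\chi(R - R_{s_0})\bigr),
\end{equation*}
which I would invert by a Neumann series on $W^{2,2}\cap L^2$. Since Proposition~\ref{parametrix-bounded-0} provides $\mathcal{G}(s_0,z):L^2\to W^{2,2}\cap L^2$ bounded uniformly in $z>0$, the task reduces to showing $\|\chi(R-R_{s_0})\|_{W^{2,2}\cap L^2\to L^2}=O(\varepsilon)$, so that for $\varepsilon$ small enough the Neumann series converges in $\mathcal{L}(W^{2,2}\cap L^2)$.

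For this smallness estimate, I would analyse the two contributions separately. The term $\chi R_{s_0}$ is small because on $\supp\chi$ one has $|s-s_0|<2\varepsilon$, so by Taylor expansion and Assumption~\ref{iterative-assumption} one has $A(s)-A(s_0)=O(\varepsilon)$ as an operator $H^2\to H$, and $\Delta_{\R^b,s}-\Delta_{\R^b,s_0}=O(\varepsilon)$ as a second-order operator in $s$; combined with the boundedness $X^{-2}\colon W^{2,2}\to L^2$ (this is the defining weight of $W^{2,2}$), this yields the required $O(\varepsilon)$ bound. For $\chi R$, each summand has the form $a_{\alpha\beta}(x,s)\,X^{-1}(x\partial_x)^\alpha(x\partial_s)^\beta$ with $\alpha+|\beta|\le 2$ and $a_{\alpha\beta}$ of order $2-\alpha-|\beta|$ on the Hilbert scale, hence acts continuously $W^{2,2}\cap L^2\to L^2$; since the cutoff restricts $x<2\varepsilon$, each such term picks up an extra factor of $\varepsilon$ from the $x$-factors in $(x\partial_x)^\alpha(x\partial_s)^\beta$, producing the desired smallness. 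This gives the bounded inverse $\mathcal{G}(z) = \bigl(I+\mathcal{G}(s_0,z)\chi(R-R_{s_0})\bigr)^{-1}\mathcal{G}(s_0,z)$.

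For the commutator statement I would proceed by induction on the length $|j|=k$, starting from the identity
\begin{equation*}
[\partial_{s_\ell},\mathcal{G}(z)] = -\mathcal{G}(z)\,[\partial_{s_\ell},L_\chi]\,\mathcal{G}(z),
\end{equation*}
valid since $z^2$ is $s$-independent. Iterating and applying the Leibniz rule, $C^j(\mathcal{G}(z))$ expands into a finite sum of words
\begin{equation*}
\mathcal{G}(z)\,C^{j^{(1)}}(L_\chi)\,\mathcal{G}(z)\,C^{j^{(2)}}(L_\chi)\cdots\mathcal{G}(z)\,C^{j^{(m)}}(L_\chi)\,\mathcal{G}(z),
\end{equation*}
where the multi-indices $j^{(1)},\dots,j^{(m)}$ range over the ordered partitions of $j$. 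Each iterated commutator $C^{j^{(i)}}(L_\chi)$ has the same formal structure as $L_\chi$ itself, but with the $s$-dependent coefficients $A(s)$, $\Delta_{\R^b,s}$ and $a_{\alpha\beta}(x,s)$ replaced by their $s$-derivatives, which remain smooth families of operators of the same orders by Assumption~\ref{iterative-assumption}; the factor $\chi$ contributes only additional smooth compactly supported coefficients. Hence $C^{j^{(i)}}(L_\chi)\colon W^{2,2}\cap L^2\to L^2$ is bounded, and combined with the boundedness $\mathcal{G}(z)\colon L^2\to W^{2,2}\cap L^2$ from the first part, every word is bounded $L^2\to W^{2,2}\cap L^2$; so is $C^j(\mathcal{G}(z))$.

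The principal obstacle is the uniformity in $z>0$ of the Neumann step: the factor $\mathcal{G}(s_0,z)$ must contribute no $z$-growth when viewed as an operator into $W^{2,2}\cap L^2$. This is precisely what Proposition~\ref{parametrix-bounded-0} supplies, resting on the sharp bound $\|G(s,z)\|_{L^2\to L^2}=O(z^{-2})$ of Proposition~\ref{small} together with $\ell(s)\circ G(s,z)=\textup{Id}-z^2 G(s,z)$, which together control the graph-norm on the intersection space. Without this $z$-uniformity the perturbative scheme would fail at exactly the large-$z$ regime needed later for the resolvent trace asymptotics, so making the $\varepsilon$-smallness argument genuinely uniform in $z$ is the crux of the proof.
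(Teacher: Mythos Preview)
Your proposal is correct and follows essentially the same approach as the paper: invert $L_\chi+z^2$ by a Neumann series perturbation of $\mathcal{G}(s_0,z)$, using that $\chi(R-R_{s_0})$ is $O(\varepsilon)$ as an operator $W^{2,2}\cap L^2\to L^2$ (the paper cites this estimate from \cite[(9.7)]{HLV17} rather than rederiving it). The only notable difference is in the commutator step: the paper differentiates the Neumann series term by term, observing that $[\partial_s,\cdot]$ commutes with $\mathcal{G}(s_0,z)$ and hence lands only on the perturbation $W=\chi(R_{s_0}-R)$, so one must re-verify convergence of the differentiated series; your use of the resolvent identity $[\partial_{s_\ell},\mathcal{G}(z)]=-\mathcal{G}(z)\,[\partial_{s_\ell},L_\chi]\,\mathcal{G}(z)$ produces instead a \emph{finite} sum of words and sidesteps that convergence check, which is a mild simplification.
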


\begin{proof}
As in our previous work \cite[(9.7)]{HLV17} we can estimate for some constant 
$C>0$
\begin{equation}
\| \, \chi \circ \left( R - R_{s_0} \right)\circ \mathcal{G}(s_0,z) \|_{L^2 
\to L^2} \leq \varepsilon C,
\end{equation}
where in contrast to \cite[(9.7)]{HLV17}, we apply $\mathcal{G}(s_0,z)$ to 
$L^2(\R_+ \times \R^b, H)$ instead of $L^2_{\rm comp}(\R_+ \times \R^b, H)$ for $z>0$, 
due to Proposition \ref{parametrix-bounded-0}.
Consequently, for $\varepsilon > 0$ sufficiently small, the Neumann series 
\begin{equation*}
\textup{Id} + \sum_{j=0}^\infty \left( \chi \circ \left( R_{s_0} -R\right)  
\circ \mathcal{G}(s_0,z) \right)^{j+1}
\end{equation*}
converges and defines a bounded operator on $L^2(\R_+ \times \R^b, H)$. Hence, for $z>0$, we obtain the inverse
to $L(s_0) + \chi \circ (R - R_{s_0}) + z^2$ by setting
\begin{equation}\label{G-inverse}
\begin{split}
\mathcal{G}(z) := \mathcal{G}(s_0,z) \, \circ &\left(
\textup{Id} + \sum_{j=0}^\infty \left( \chi \circ \left( R_{s_0} 
-R\right) \circ \mathcal{G}(s_0,z) \right)^{j+1}
\right) \\ = \, & \left(
\textup{Id} + \sum_{j=0}^\infty \left(\mathcal{G}(s_0,z) \circ \chi \circ 
\left( R_{s_0} -R\right) \right)^{j+1}
\right) \circ \mathcal{G}(s_0,z)  \\ &: L^2(\R_+ \times \R^b, H) \to (W^{2, 
2}\cap L^2) (\R_+ \times \R^b, H).
\end{split}
\end{equation}
The commutator $C^j(\mathcal{G}(z))$ is formally given by the series 
\eqref{G-inverse}, where the error term $W:=\chi \circ \left( R_{s_0} 
-R\right)$ is now replaced
by commutators $C^{i} (W(s))$ with $|i| \leq |j|$. Since the coefficients of 
$W$ depend smoothly on $s\in \R^b$ by Assumption 
\ref{iterative-assumption}, the commutators $C^{i} (W(s))$ define again a smooth family 
of bounded operators from $W^{2,2}$ to $L^2_{\rm comp}(\R_+ \times \R^b, H)$. Consequently, taking $\varepsilon > 0$
sufficiently small, the series for $C^j(\mathcal{G}(z))$ converges and the statement for the commutators follows
by Proposition \ref{parametrix-bounded-0}.
\end{proof}

We henceforth fix $\varepsilon > 0$ sufficiently small, without further mentioning, such that the statements of 
Theorem \ref{parametrix-bounded} apply.

\subsection{Schatten class property of the resolvent parametrix}

We write $C_p(\mathscr{H})$ for the $p-$th Schatten class of linear operators 
on 
a Hilbert space $\mathscr{H}$. Later on we will omit $\mathscr{H}$ if the 
choice of the Hilbert space is 
obvious.

\begin{theorem}\label{Schatten-thm}
	We impose the Assumption \eqref{iterative-assumption} and assume $A(s_0) > \frac{9}{4}$ for all $s_0\in \R^b$.
	Assume $A(s_0)^{-1}$ is in the Schatten Class $C_p(H)$ for some $p> 0$ and any $s_0 \in \R^b$.			     
	Consider $\phi \in C^\infty_0([0,\infty) \times \R^b)$
	and write $\Phi$ for the operator of multiplication by $\phi$. Then for 
	any 
	multi-index $j \in \{1,\ldots,b\}^k$
	\begin{equation}
     \begin{split}
	&\Phi \circ \mathcal{G}(z) \in C_{p+ \frac{b+1}{2}}(L^2(\R_+ \times \R^b, H)), \\
	&\Phi \circ C^j(\mathcal{G}(z)) \in C_{p+ \frac{b+1}{2}}(L^2(\R_+ \times \R^b, H)).
	\end{split}
\end{equation}
In case $A(s_0)$ depends on an additional parameter $s_1\in 
\R^{b_1}$ with Schatten norm of $A(s_0)^{-1}\in C_p(H)$ being 
uniform in $s_1$, then the Schatten norms of $\Phi \circ 
\mathcal{G}(z)$ and $\Phi \circ C^j(\mathcal{G}(z))$ are uniform in 
$s_1$ as well.
\end{theorem}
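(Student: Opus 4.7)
The plan is to reduce to the frozen-coefficient operator $\mathcal{G}(s_0,z)$ and to derive the Schatten estimate from a heat-trace bound of Weyl type. By the Neumann series of Theorem \ref{parametrix-bounded}, one has $\mathcal{G}(z) = \mathcal{G}(s_0,z)\circ B(z)$ with $B(z)$ bounded on $L^2$ uniformly in $z>0$; since each $C_r$ is a two-sided ideal of $\mathcal{L}(L^2)$, it suffices to establish the Schatten property for $\Phi\mathcal{G}(s_0,z)$. The commutators $C^j(\mathcal{G}(z))$ reduce to the same analysis: because $\mathcal{G}(s_0,z)$ is a Fourier multiplier in $s$ and hence commutes with $\partial_s$, differentiating the identity $\mathcal{G}(z) = \mathcal{G}(s_0,z)B(z)$ gives $[\partial_s,\mathcal{G}(z)] = \mathcal{G}(s_0,z)\partial_s B(z)$; iterating, $\Phi\, C^j(\mathcal{G}(z))$ is a finite sum of products $\Phi\,\mathcal{G}(s_0,z)\,B_j(z)$ with $B_j(z)\in\mathcal{L}(L^2)$ by Theorem \ref{parametrix-bounded}.

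The heart of the argument is the heat-trace estimate for the model operator. The tensor-sum structure $L(s_0) = \ell(s_0)\otimes I + I\otimes \Delta_{\R^b,s_0}$ on $L^2(\R_+,H)\widehat\otimes L^2(\R^b)$ factorises the semigroup, $e^{-tL(s_0)} = e^{-t\ell(s_0)}\otimes e^{-t\Delta_{\R^b,s_0}}$, with the Gaussian contributing the diagonal factor $(4\pi t)^{-b/2}$. For the cone factor I would spectrally decompose $A(s_0)=\bigoplus_k \nu_k^2 P_k$ and use the explicit Bessel heat kernel
\begin{equation*}
k_\nu(t;x,x) = \frac{x}{2t}\,e^{-x^2/(2t)}\,I_\nu\bigl(x^2/(2t)\bigr)
\end{equation*}
together with the uniform Olver asymptotics \eqref{EqBesselestimates}. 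The hypothesis $A(s_0)^{-1}\in C_p(H)$ translates via an Abelian--Tauberian argument into the Weyl-type counting $\#\{k:\nu_k\le \lambda\} = O(\lambda^{2p})$, and the super-exponential decay of $I_\nu(u)$ for $\nu\gg u$ keeps the sum $\sum_k I_{\nu_k}(u)\,e^{-u}$ finite, producing the bound $\int \phi(x)\,\tr_H k_{\ell(s_0)}(t;x,x)\,dx = O(t^{-p-1/2})$ as $t\to 0^+$, uniformly in $s_0$ (and in the additional parameter $s_1$ whenever the Schatten norm of $A(s_0)^{-1}$ is uniform in $s_1$). Combining with the Gaussian factor gives
\begin{equation*}
\tr\bigl[\Phi\,(L(s_0)+z^2)^{-m}\bigr] \leq C_\Phi \int_0^\infty t^{m-1-p-(b+1)/2}\,e^{-tz^2}\,dt,
\end{equation*}
which is finite for any integer $m>p+(b+1)/2$.

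To pass from this trace-class bound for an integer power of the resolvent to the Schatten class statement for $\Phi\mathcal{G}(s_0,z)$ itself, I would invoke the Araki--Lieb--Thirring inequality: for $r\geq 1$, $\tr\bigl((\mathcal{G}(s_0,z)\Phi^2\mathcal{G}(s_0,z))^r\bigr)\leq \tr\bigl(\Phi^{2r}\mathcal{G}(s_0,z)^{2r}\bigr)$. Since the left-hand side equals $\|\Phi\mathcal{G}(s_0,z)\|_{C_{2r}}^{2r}$, any choice of $2r>p+(b+1)/2$ with $r\geq 1$ yields the asserted Schatten class membership, using the inclusion $C_r\subset C_{r'}$ for $r\leq r'$ to obtain the stated index. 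Uniformity in $s_1$ transfers directly from the uniform heat-trace bound and the uniform estimates of Theorem \ref{parametrix-bounded}, and the commutator case follows by composition with the bounded factors $B_j(z)$ described above.

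The main obstacle is the fiber heat-trace bound, where the abstract Schatten hypothesis on $A(s_0)^{-1}$ must be converted into a usable pointwise estimate for $\tr_H k_{\ell(s_0)}(t;x,x)$. The delicate point is the interplay between the Weyl counting of the eigenvalues $\nu_k^2$ of $A(s_0)$ and the uniform asymptotics of $I_\nu(u)$: one must exploit the transition from the essentially $\nu$-independent growth $I_\nu(u)\sim e^u/\sqrt{2\pi u}$ in the regime $\nu\leq u$ to the super-exponential suppression $I_\nu(u)\leq (eu/(2\nu))^\nu/\sqrt{2\pi\nu}$ for $\nu\gg u$, in order to compensate the infinite multiplicity of $H$ and achieve the sharp polynomial decay rate $t^{-p-1/2}$.
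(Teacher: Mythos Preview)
Your reduction to $\mathcal{G}(s_0,z)$ via the Neumann series and your treatment of the commutators (using that $\mathcal{G}(s_0,z)$ is a Fourier multiplier in $s$) are both correct. The approach diverges sharply from the paper thereafter, and there is a gap at the endpoint Schatten index.

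The paper never touches the heat kernel. Instead it introduces an auxiliary self-adjoint operator
\[
\tilde Q := \mathrm{Id} + A(s_0) + \sum_{k=0}^b P_k
\]
on $L^2(\R_+\times\R^b,H)$, where each $P_k$ is a Legendre operator unitarily transplanted to $L^2(\R_+)$ or $L^2(\R)$, with eigenvalues $n(n+1)$. A direct eigenvalue count (quoted from \cite[Theorem 4.1]{BS3}) gives $\tilde Q^{-1}\in C_{p+(b+1)/2}$ with norm controlled by $\|(A(s_0)+1)^{-1}\|_p^p$; the point is that summing $(1+\nu_k^2+|\mathbf n|^2)^{-r}$ over $\mathbf n\in\N_0^{b+1}$ yields $C(1+\nu_k^2)^{(b+1)/2-r}$, and at $r=p+(b+1)/2$ the remaining sum over $k$ is exactly $\|(A+1)^{-1}\|_p^p$. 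Since Theorem~\ref{parametrix-bounded} shows that $\Phi\circ\mathcal{G}(z)$ and $\Phi\circ C^j(\mathcal{G}(z))$ land in $W^{2,2}_{\mathrm{comp}}\subset\dom(\tilde Q)$, one writes $\Phi\circ\mathcal{G}(z)=\tilde Q^{-1}\circ(\tilde Q\circ\Phi\circ\mathcal{G}(z))$ and concludes by the ideal property. This delivers the \emph{exact} index $p+(b+1)/2$ with no loss.

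Your heat-trace route does not reach that endpoint. From $\tr[\Phi\,e^{-tL(s_0)}]=O(t^{-p-(b+1)/2})$ and the Laplace transform you get $\tr[\Phi^{2r}\mathcal{G}(s_0,z)^{2r}]<\infty$ only for $2r>p+(b+1)/2$, since $\int_0^1 t^{2r-1-p-(b+1)/2}\,dt$ diverges at equality. Araki--Lieb--Thirring then yields $\Phi\mathcal{G}(s_0,z)\in C_{2r}$ for every $2r>p+(b+1)/2$, i.e.\ membership in $C_{p+(b+1)/2+\varepsilon}$ for all $\varepsilon>0$. Your final sentence invoking the inclusion $C_r\subset C_{r'}$ for $r\le r'$ does not help: that inclusion enlarges the Schatten index, whereas you need to \emph{decrease} it to hit $p+(b+1)/2$. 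The heat-trace bound $O(t^{-q})$ is in general sharp under the bare hypothesis $A^{-1}\in C_p$, so no Tauberian refinement rescues the endpoint without further structural input. The weaker conclusion $C_{p+(b+1)/2+}$ would suffice for the trace-asymptotic applications downstream, but it is not the theorem as stated.
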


\begin{proof}
We review the constructions in Br\"uning-Seeley \cite[Lemma 3.5]{BS2} and \cite[Lemma 4.2]{BS3}.
Consider the Legendre operator 
\begin{equation*}
\mathscr{P}:=-\partial_\theta \Bl \theta \Bl\theta-\frac{\pi}{2}\Br  
\partial_\theta\Br:
C^\infty_0\left(0,\frac{\pi}{2}\right) \to 
C^\infty_0\left(0,\frac{\pi}{2}\right).
\end{equation*}
It is self-adjoint in $L^2(0,\frac{\pi}{2})$ and discrete with domain given 
by the graph closure of $C^\infty_0(0,\frac{\pi}{2})$
and eigenvalues $n(n+1), n\in \N_0$. We consider the unitary transformation 
$$
U: L^2\left(0,\frac{\pi}{2}\right) \to L^2(\R_+), \quad f \mapsto Uf:= (\cos \cdot f ) \circ \arctan,
$$
and define a self-adjoint operator in $L^2(\R_+)$
\begin{equation}\label{Eq-P0}
P_0:= U \circ \mathscr{P} \circ U^*.
\end{equation}
Consider the Legendre operator 
\begin{equation*}
\tilde{\mathscr{P}}:=-\partial_\theta \Bl\theta \Bl\pi^2 - 
4\theta^2\Br \partial_\theta\Br:
C^\infty_0\left(-\frac{\pi}{2},\frac{\pi}{2}\right) \to 
C^\infty_0\left(-\frac{\pi}{2},\frac{\pi}{2}\right).
\end{equation*} 
It is self-adjoint and discrete in $L^2(-\frac{\pi}{2},\frac{\pi}{2})$ with 
domain given by the graph closure of 
$C^\infty_0(-\frac{\pi}{2},\frac{\pi}{2})$
and eigenvalues $4n(n+1), n\in \N_0$. The map $U$ from above defines a unitary transformation from 
$L^2(-\frac{\pi}{2},\frac{\pi}{2})$ to $L^2(\R)$ and we define a self-adjoint 
operator in $L^2(\R)$ by
$$
P:= \frac{1}{4}U \circ \tilde{\mathscr{P}} \circ U^*.
$$
We define the auxiliary self-adjoint operator in $L^2(\R_+ \times \R^b, H)$
\begin{equation}
\tilde Q:= \left( \textup{Id} + A(s_0) + \sum_{k=0}^b P_k \right),
\end{equation}
where for each $k\geq 1$, the operator $P_k$ is defined by $P$ acting on 
$L^2(\R^b)$ in the variable $s_k$ and $P_0$ is defined in \Eqref{Eq-P0}. By 
construction, $W^{2,2}_{\rm comp}(\R_+ \times \R^b, H)$ is contained 
in the domain of the self-adjoint operator $\tilde Q$. \medskip

As checked in \cite[Theorem 4.1]{BS3} from the discrete spectrum of each 
$P_k$, the self-adjoint extension of $\tilde{Q}$ admits an inverse 
$\tilde{Q}^{-1}$ which is in the Schatten class $C_{p+ \frac{b+1}{2}}$ and 
satisfies
\begin{equation}
\|Q^{-1}\|_{p+\frac{b+1}{2}}^{p+\frac{b+1}{2}}\leq C \ 
\|(A(s_0)+1)^{-1}\|^p_p.
\end{equation}
By Theorem \ref{parametrix-bounded}, the
operators $ \Phi \circ \mathcal{G}(z)$ and $ \Phi \circ C^j(\mathcal{G}(z))$ map into  $W^{2,2}_{\rm comp}(\R_+ \times \R^b, H)
\subset \dom (\tilde{Q})$ and hence the compositions 
$\tilde{Q} \circ \Phi \circ \mathcal{G}(z)$ and $\tilde{Q} \circ \Phi \circ 
C^j(\mathcal{G}(z))$ are 
bounded operators in $L^2(\R_+ \times \R^b, H)$. We conclude ($L^2 = L^2(\R_+ \times \R^b, H)$)
\begin{equation}
     \begin{split}
     & \Phi \circ \mathcal{G}(z) = \tilde{Q}^{-1} \circ \left( \tilde{Q} 
     \circ \Phi \circ \mathcal{G}(z) \right) \in C_{p+ \frac{b+1}{2}}(L^2), \\
     & \Phi \circ C^j(\mathcal{G}(z)) = \tilde{Q}^{-1} \circ \left( \tilde{Q} 
     \circ \Phi \circ C^j(\mathcal{G}(z)) \right) \in C_{p+ 
     \frac{b+1}{2}}(L^2).
	\end{split}
\end{equation}
In case of an additional parameter $s_1$, uniform Schatten norm 
estimates for $A(s_0)^{-1}$ yield therefore uniform Schatten norm estimates 
for $\tilde{Q}^{-1}$ and hence also for $\Phi \circ 
\mathcal{G}(z)$ and $\Phi \circ C^j(\mathcal{G}(z))$.
\end{proof}

\subsection{Estimates away from the diagonal}

Our next result extends a well-known fact from classical elliptic 
analysis to the setting of abstract edges. It will be used later on 
in order to glue local parametrices to a global resolvent in Theorem \ref{main-theorem}.

\begin{lemma}\label{elliptic}
We impose the Assumption \ref{iterative-assumption} with $A(s) > \frac{9}{4}$ 
for all $s \in \R^b$. Let
$\pi_1: [0,\infty) \times \R^b \to [0,\infty)$, $\pi_2: [0,\infty) \times \R^b 
\to \R^b$ be the obvious projections
onto the first and the second factors, respectively. Consider cutoff functions 
\begin{enumerate}
\item $\phi_1, \psi_1 \in C^\infty_0([0,\infty) \times \R^b)$ with $\supp \pi_1 \circ \phi_1 \cap 
\supp \pi_1 \circ \psi_1 = \varnothing$,
\item $\phi_2, \psi_2 \in C^\infty_0([0,\infty) \times \R^b)$ with $\supp \pi_2 \circ \phi_2 \cap 
\supp \pi_2 \circ \psi_2 = \varnothing$.
\end{enumerate}
We also denote the multiplication by a cutoff function operator by its capital letter. 
Let $\{P_1, \ldots , P_\alpha\}$, $\alpha \in \N$, be a set of smooth families 
of linear operators on the Hilbert scale $H^*(Q(s_0))$ of order at most $2$, parametrized by 
$(x,s) \in [0,\infty) \times \R^b$. Then for any 
$N \in \N$, there exists a constant $C_{\alpha, N}>0$ such that
\begin{equation}
     \begin{split}
&\| \Phi_1 \circ \left(\prod\limits_{j=1}^{\alpha} P_j \circ \mathcal{G}(z)\right) \circ \Psi_1 \|_{L^2 \to L^2}
\leq C_{N} z^{-N}, \\ 
&\| \Phi_2 \circ \left(\prod\limits_{j=1}^{\alpha} P_j \circ \mathcal{G}(z)\right) \circ \Psi_2 \|_{L^2 \to L^2}
\leq C_{N} z^{-N}.
	\end{split}
\end{equation}
\end{lemma}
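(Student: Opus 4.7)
The plan for both cases is to reduce the edge estimate to Proposition~\ref{elliptic-cone} for the Bessel resolvent on the abstract cone. For case (1), the disjoint $x$-supports allow me to insert $x$-only auxiliary cutoffs. Since $\pi_1(\supp\phi_1)$ and $\pi_1(\supp\psi_1)$ are disjoint compact subsets of $[0,\infty)$, I choose $\tilde\phi_1, \tilde\psi_1 \in C_0^\infty[0,\infty)$ with $\tilde\phi_1 \tilde\psi_1 \equiv 0$, $\tilde\phi_1 \equiv 1$ on $\pi_1(\supp\phi_1)$ and $\tilde\psi_1 \equiv 1$ on $\pi_1(\supp\psi_1)$, so that $\Phi_1 \tilde\Phi_1 = \Phi_1$ and $\tilde\Psi_1 \Psi_1 = \Psi_1$. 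The multiplications $\tilde\Phi_1, \tilde\Psi_1$ depend only on $x$ and hence commute with the Fourier transform in $s$. From the integral representation of $\mathcal{G}(s_0, z)$ in Proposition~\ref{parametrix-bounded-0}, the operator $\tilde\Phi_1 \mathcal{G}(s_0, z) \tilde\Psi_1$ is then a Fourier multiplier in $s$ with operator-valued symbol $\tilde\Phi_1 G(s_0, \sqrt{|\sigma|^2_{s_0} + z^2}) \tilde\Psi_1$ acting on $L^2(\R_+, H)$. By Plancherel, its norm on $L^2(\R_+ \times \R^b, H)$ equals $\sup_{\sigma}\|\tilde\Phi_1 G(s_0, \sqrt{|\sigma|^2_{s_0} + z^2}) \tilde\Psi_1\|$, and Proposition~\ref{elliptic-cone} bounds the latter by $C_N (|\sigma|^2_{s_0} + z^2)^{-N/2} \leq C_N z^{-N}$ uniformly in $\sigma$.

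To accommodate the $(x,s)$-dependent order-$2$ operators $P_j$ and then replace $\mathcal{G}(s_0, z)$ by the full $\mathcal{G}(z)$, I factor $P_j = (P_j \circ A(s_0)^{-1}) \circ A(s_0)$: the first factor is a bounded operator-valued multiplication on $L^2(\R_+\times\R^b, H)$, while $A(s_0)$ is $s$-independent and preserves the Fourier-multiplier structure. Products of several factors $P_j \mathcal{G}(s_0, z)$ are then handled by the inductive cutoff insertion from the proof of Proposition~\ref{elliptic-cone}: one places auxiliary $x$-only cutoffs between successive factors so that at each step a pair of disjoint cutoffs arises and the estimate reduces to a shorter product. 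Finally, $\mathcal{G}(z)$ is related to $\mathcal{G}(s_0, z)$ via the Neumann series of Theorem~\ref{parametrix-bounded}, which converges geometrically in operator norm because $\chi(R_{s_0} - R)\mathcal{G}(s_0, z)$ is $O(\varepsilon)$ in norm; each summand is composed of $\mathcal{G}(s_0, z)$ factors alternated with the localized bounded perturbation $\chi(R_{s_0} - R)$ and inherits the $z^{-N}$ bound by the same cutoff-insertion argument.

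For case (2), I would replace Fourier diagonalization by an integration-by-parts argument. Reducing to $s$-only cutoffs $\tilde\phi_2(s), \tilde\psi_2(s)$ with $|s - t| \geq a > 0$ on their supports, the Schwartz kernel of $\tilde\Phi_2 \mathcal{G}(s_0, z) \tilde\Psi_2$ in $(s, t)$ reads $\tilde\phi_2(s)(2\pi)^{-b} \int_{\R^b} e^{i\langle s - t, \sigma\rangle} G(s_0, \sqrt{|\sigma|^2_{s_0} + z^2})\, d\sigma\, \tilde\psi_2(t)$ as an operator-valued function on $\R_+ \times \R_+$. Repeated integration by parts in $\sigma$ using the identity $(s-t)\cdot \nabla_\sigma e^{i\langle s-t, \sigma\rangle} = i|s-t|^2 e^{i\langle s-t, \sigma\rangle}$ trades oscillation for $\sigma$-derivatives of the symbol; the estimate $\|\partial_\sigma^\gamma G(s_0, \sqrt{|\sigma|^2_{s_0} + z^2})\|_{L^2 \to L^2} \leq C_\gamma (|\sigma|^2_{s_0} + z^2)^{-1 - |\gamma|/2}$, obtained by differentiating the identity $G = (\ell(s_0) + r^2)^{-1}$ and applying Proposition~\ref{small}, combined with the lower bound $|s - t| \geq a$ and a direct bound on the remaining $\sigma$-integral, yields arbitrarily rapid decay in $z$. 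The $P_j$'s and the Neumann series are incorporated as in case (1).

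The main technical obstacle is that the $P_j$ and the perturbation coefficients in $R - R_{s_0}$ depend on $s$ and so do not commute with the Fourier transform that diagonalizes $\mathcal{G}(s_0, z)$. The remedy, already exploited in the proof of Proposition~\ref{elliptic-cone}, is to absorb these $s$-dependent coefficients into bounded multiplications by factoring through the $s$-independent operator $A(s_0)$, and to use iterative cutoff insertion to reduce every composite expression to a single-factor estimate on $\mathcal{G}(s_0, z)$.
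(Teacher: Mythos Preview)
Your proposal is correct and follows essentially the same strategy as the paper: reduce to $\mathcal{G}(s_0,z)$, invoke Proposition~\ref{elliptic-cone} for case~(1), use integration by parts in the Fourier variable $\sigma$ together with the derivative bound $\|\partial_\sigma^\beta G\|\le C_\beta(|\sigma|^2+z^2)^{-1-|\beta|/2}$ for case~(2), then handle products via the inductive cutoff insertion and pass to $\mathcal{G}(z)$ through the Neumann series~\eqref{G-inverse}. The only noteworthy difference is in case~(1): the paper bounds the operator-valued Schwartz kernel in $(s,\tilde s)$ by $\int_{\R^b}\|\Phi_1(\cdot,s)\,G(s_0,\sqrt{|\sigma|^2_{s_0}+z^2})\,\Psi_1(\cdot,\tilde s)\|\,d\sigma$ and then uses compact $s$-support, whereas you pass to $x$-only cutoffs and exploit the Fourier-multiplier structure via Plancherel to obtain a $\sup_\sigma$ bound directly; both arguments rest on the same cone estimate and yield the same conclusion.
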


\begin{proof}
For the first statement, we compute
using Proposition \ref{elliptic-cone}
\begin{align*}
	&\| \left( \Phi_1 \circ \mathcal{G}(s_0,z) \circ \Psi_1 \right) 
	(s, \widetilde{s}) \|_{L^2(\R_+, H) \to L^2(\R_+, H)} 
	\\ &\leq (2\pi)^{-b} \int_{\R^b} \| \left( \Phi_1(\cdot, s) \circ G \circ \Psi_1(\cdot, \widetilde{s}) 
	\right)(s_0, |\sigma|^2_{s_0} + z^2) \|_{L^2 \to L^2}
	 \, d\sigma  \\ & \leq C_N  \int_{\R^b}
	(|\sigma|_{s_0}^2 + z^2)^{-N} \, d\sigma
	  \leq \tilde C_N z^{-N+b}.
	\end{align*}
The general case, including products, is obtained verbatim. For the second statement, 
note that $\b_\sigma G = - G^2 \b_\sigma |\sigma|_s^2$ and hence we conclude 
in view of Proposition \ref{small} for any multi-index $\beta \in \N_0^b$
\begin{equation}\label{G-est2}
	\| \b_\sigma^\beta G (s,\sqrt{|\sigma|^2_{s} + z^2}) \|_{L^2(\R_+, 
	H) \to L^2(\R_+, H)} \leq C_{\beta} 
	(|\sigma|_s^2 + z^2)^{-1 -\frac{|\beta|}{2}}.
	\end{equation}
Hence, using integration by parts with $|\beta| \gg 0$ sufficiently large, we can write for the Schwartz 
kernel of $\Phi_2 \circ \mathcal{G}(s_0,z) \circ \Psi_2$
	\begin{align*}
	&\| \left( \Phi_2 \circ \mathcal{G}(s_0,z) \circ \Psi_2 \right) 
	(s, \widetilde{s}) \|_{L^2(\R_+, H) \to L^2(\R_+, H)} 
	\\ &= (2\pi)^{-b} \| \int_{\R^b}\frac{e^{i\langle s-\widetilde{s}, 
	\sigma \rangle}}{i^{|\beta|}(s-\widetilde{s})^\beta} (\partial_\sigma^\beta G)(s_0, |\sigma|^2_{s_0} + z^2)
	\cdot \phi_2(s) \cdot \psi_2(\widetilde{s}) \, d\sigma \, \|_{L^2 \to L^2} \\
	& \leq \frac{C_{\beta}}{(2\pi)^{b}\|s-\widetilde{s}\|^{|\beta|}}  \int_{\R^b}
	(|\sigma|_{s_0}^2 + z^2)^{-2 -|\beta|}	\cdot \phi_2(s) \cdot \psi_2(\widetilde{s}) \, d\sigma
	 \\ & \leq \textup{const} \cdot d^{-|\beta|} z^{-2 -|\beta|}
\int_{\R^b} \left(1+ |\sigma|_{s_0}\right)^{-2-|\beta|} d\sigma \cdot \phi_2(s) \cdot \psi_2(\widetilde{s}),
	\end{align*}
where we set $d:= \textup{dist} (\supp \phi , \supp \psi ) >0$. We conclude
\begin{equation}
\| \Phi_2 \circ \mathcal{G}(s_0,z) \circ \Psi_2 \|_{L^2 \to L^2}
\leq C_{N} z^{-N}.
\end{equation}
Note that this is not the first statement yet, since we need the same estimate for 
$\mathcal{G}(z)$ instead of $\mathcal{G}(s_0,z)$. In order to conclude the first statement, note that
the estimate also holds for $\Phi_1 \circ (R - R_{s_0}) \circ 
\mathcal{G}(s_0,z) \circ \Psi_1$, since 
$(R - R_{s_0}) $ is comprised of $\partial_x, \partial_s$ and linear 
operators of at most second order on the Hilbert scale $H^*(Q(s_0))$,
which does not alter the argument.
Similarly, we conclude for any $j\in N$ and 
some constant $C_{jN}>0$
\begin{equation}
\| \Phi_2 \circ  \Bigl( \chi \circ (R - R_{s_0}) \circ 
\mathcal{G}(s_0,z)\Bigr)^j \circ \Psi_2 \|_{L^2 \to L^2}
\leq C_{jN} \cdot z^{-N}.
\end{equation}
In view of the Neumann series representation 
\eqref{G-inverse}, the first statement now follows for $\mathcal{G}(z)$ and similarly for products. 
\end{proof}

\section{Trace asymptotics of the resolvent on an abstract edge}\label{BS-section}

\subsection{Monomials on the Hilbert space $H$}\label{monomials-section-1}
In this section we define a set of operators $\mathscr{R}^*_{\bullet}$
of smooth families of linear operators on the Hilbert scale of the generator $Q(s_0)$, which appear in the action of the
operator $L$. In the section \S \ref{iterated-section} below, the lower index $\bullet$ 
will refer to the stratification depth of the links of the edge fibration.
More precisely, recall from \eqref{perturbation}
$$
R:= \sum_{\alpha + |\beta| \, \leq \, 2} a_{\alpha \beta}(x, s) \circ 
X^{-1} \circ (x\partial_x)^\alpha \circ (x\partial_s)^\beta,
$$
where for an index $\alpha \in \N_0$ and a multi-index $\beta \in \N_0^b$, 
the 
coefficient $a_{\alpha \beta}(x, s)$ is a smooth family of linear operators 
on the Hilbert scale $H^*(Q(s_0))$
of order at most $(2-\alpha - |\beta|)$. We consider smooth families of 
operators on the Hilbert scale $H^*(Q(s_0))$ with parameters 
$(x,s) \in [0,\infty) \times \R^b$
\begin{equation*}
\begin{split}
&\mathscr{R}^0_\bullet := 
\textup{span} \, \bigsetdef{a_{\alpha \beta}, \textup{Id}}{\alpha + 
|\beta| 
= 2} \\
&\mathscr{R}^1_\bullet := \mathscr{R}^0_\bullet - 
\textup{span} \, \bigsetdef{a_{\alpha \beta}, \partial_s^{\gamma} Q(s)}{
\alpha + |\beta| = 1, \gamma \in \N_0^b}, \\ 
&\mathscr{R}^2_{\bullet} := \mathscr{R}^0_\bullet - 
\textup{span} \, \bigsetdef{a_{\alpha \beta}, R_1 \circ R_2}{
R_1, R_2 \in \mathscr{R}^1_\bullet, \alpha = |\beta| = 0}.
\end{split}
\end{equation*}
Clearly, by construction and definition in \eqref{A-definition}
\begin{equation*}
Q(s) \in \mathscr{R}^1_{\bullet} \ \textup{and} \ A(s) \in \mathscr{R}^2_{\bullet}.
\end{equation*}
We define a notion of 
degree, $\deg: \mathscr{R}^0_\bullet \sqcup 
\mathscr{R}^1_\bullet 
\sqcup \mathscr{R}^2_\bullet \to \R$, by setting
\begin{equation}
\deg \restriction \mathscr{R}^0_\bullet := 0, \quad 
\deg \restriction \mathscr{R}^1_\bullet := 1, \quad
\deg \restriction \mathscr{R}^2_\bullet := 2.
\end{equation}
The compositions $\mathscr{R}^*_{\bullet} \circ (A(s)+z^2)^{-1}$ define 
bounded operators on the Hilbert space $H$. We consider monomials of fixed factorization,
composed of $(A(s)+z^2)^{-1}$ and $R \circ (A(s)+z^2)^{-1}, R \in \mathscr{R}^*_\bullet$. 
We define the degree of such a monomial as follows. Set $\deg(A(s)+z^2)^{-1}:=-2$.
Given a monomial of elements $(A(s)+z^2)^{-1}$ and $R \circ (A(s)+z^2)^{-1}, R \in \mathscr{R}^*_\bullet$, 
with a fixed factorization, the sum of degrees of the individual factors is called the degree of the monomial 
\footnote{Note that we are not claiming that this notion of a degree gives rise to a grading on a certain algebra 
of operators. The fixed factorization in the definition is part of the data.}. 
We denote any monomial of degree $(-\alpha), \alpha \in \N_0$ by
	\begin{equation}\label{A-monomials}
	\langle A(s)+z^2\rangle^{-\alpha}.	
	\end{equation}
For example, given any $R_i \in  \mathscr{R}^i_{\bullet}, i=0,1,2$, we find by simple counting
	\begin{equation*}
	R_0 \circ (A(s)+z^2)^{-1} \circ R_1 \circ (A(s)+z^2)^{-2} \circ R_2 \circ (A(s)+z^2)^{-3} 
	= \langle A(s)+z^2\rangle^{-9}.	
	\end{equation*}

\begin{remark}\label{monomials-definition-special}
Many of our mapping properties and estimates in the previous sections 
specifically refer to operators acting on the Hilbert scales 
up to $H^2(Q(s_0))$. This is the reason why our definition of monomials is set up to 
avoid examples of the form,\eg $R^\alpha \circ (A(s)+z^2)^{-\alpha}$, 
$\alpha \geq 2$, for some 
$R \in \mathscr{R}^2_{\bullet}$, which would require us to use the full Hilbert scale $H^*(Q(s_0))$. 
\end{remark}	

\subsection{Monomials on the abstract edge}\label{monomials-section-2}
 
We also define a set of operators $\mathscr{R}^*_{\bullet+1}$ on the abstract 
edge,
which appear in the action of the operator $L$. In the section \S \ref{iterated-section} below, 
$\bullet+1$ refers to the fact that the stratification depth of the edge is one order higher
than of its links. More precisely, we use the notation fixed in \S 
\ref{monomials-section-1} and consider linear operators 
\begin{equation*}
\begin{split}
&\mathscr{R}^0_{\bullet+1} := C^\infty_0([0,\infty) \times \R^b) \circ 
\mathscr{R}^0_\bullet \\
&\mathscr{R}^1_{\bullet+1} := \mathscr{R}^0_{\bullet+1} - 
\textup{span} \, 
\{\partial_x, \partial_{s}, X^{-1} R \mid R \in \mathscr{R}^1_\bullet\}, \\
&\mathscr{R}^2_{\bullet+1} := \mathscr{R}^0_{\bullet+1} - 
\textup{span} \, \{R_1 \circ R_2, X^{-2}R \mid 
R_1, R_2 \in \mathscr{R}^1_{\bullet+1}, R \in \mathscr{R}^2_{\bullet}\}.
\end{split}
\end{equation*}
We define the degree accordingly as before by setting
\begin{equation}
\deg \restriction \mathscr{R}^0_{\bullet+1} := 0, \quad 
\deg \restriction \mathscr{R}^1_{\bullet+1} := 1, \quad
\deg \restriction \mathscr{R}^2_{\bullet+1} := 2.
\end{equation}
Note in the notation of Theorem \ref{parametrix-bounded} and for any 
cutoff function $\psi \in C^\infty_0([0,\infty) \times \R^b)$ by construction
\begin{equation*}
\chi \circ (R-R_{s_0}), \ \Psi \circ L(s_0), \ \Psi \circ L \in \mathscr{R}^2_{\bullet +1}. 
\end{equation*}
The compositions $\mathscr{R}^*_{\bullet + 1} \circ (L+z^2)^{-1}$ define bounded operators on $L^2((0,\infty)\times \R^b, H)$.
We consider monomials of fixed factorization composed of $(L+z^2)^{-1}$ and $P \circ (L+z^2)^{-1}$, 
for any $P \in \mathscr{R}^2_{\bullet + 1}$. We define the degree of such a monomial as follows. Set $\deg (L+z^2)^{-1}:=-2$. 
Given a monomial of elements $(L+z^2)^{-1}$ and $P \circ (L+z^2)^{-1}, P \in \mathscr{R}^2_{\bullet + 1}$ 
with a fixed factorization, the sum of degrees of the individual factors is called the degree of the 
monomial. We denote any monomial of degree $(-\alpha), \alpha \in \N_0$ by
	\begin{equation}\label{L-monomials}
	\langle L+z^2\rangle^{-\alpha}
	\equiv \langle \mathcal{G}(z) \rangle^{\alpha}.
	\end{equation}
Note as before that the fixed factorization in the definition is part of the data. 
In case the individual factors $\mathcal{G}(z)$ are always composed with some fixed cutoff function 
$\phi \in C^\infty_0([0,\infty) \times \R^b)$, we write for the resulting monomial
$\langle \Phi \circ \mathcal{G}(z) \rangle^{\alpha}$.

We conclude the subsection with an example. Namely, we find for any
$R_i \in  \mathscr{R}^i_{\bullet+1}, i=0,1,2$, by simple counting
	\begin{equation*}
	R_0 \circ (L+z^2)^{-1} \circ R_1 \circ (L+z^2)^{-1} \circ R_2 \circ (L+z^2)^{-2} 
	= \langle L+z^2\rangle^{-5}.	
	\end{equation*}

\begin{remark}
Many of our mapping properties and estimates in the previous sections 
specifically refer to operators acting on the Sobolev scales 
up to $W^{2,2}(\R_+ \times \R^b, H)$. This is the reason why our definition of monomials is set up to 
avoid examples of the form\eg $R^\alpha \circ (L+z^2)^{-\alpha}, \alpha \geq 
1$, for some 
$R \in \mathscr{R}^2_{\bullet+1}$, which would require us to use the full Sobolev scale 
$W^{*,*}(\R_+ \times \R^b, H)$.
\end{remark}	
	
\subsection{Interior parametrix and interior asymptotic expansion}

\begin{theorem}\label{interior-parametrix}
	Assume the following is true.
	\begin{enumerate}
		\item[a)] The assumption \eqref{iterative-assumption} is satisfied and $A(s_0) > \frac{9}{4}$ for all $s_0\in \R^b$.
		\item[b)] $(A(s_0)+1)^{-1}$ is in the Schatten Class $C_p(H)$ for 
		some $p> 0$ and for all $s_0 \in \R^b$.
		In particular the monomials $\langle A(s_0)+z^2\rangle^{-\alpha}$ are 
		trace class if $\alpha > 2p$. 
		\item[c)] For $\alpha > 2p$ the monomials $\langle 
		A(s_0)+z^2\rangle^{-\alpha}$
		admit an asymptotic expansion, uniformly in the parameter $s_0 \in \R^b$, for some $\beta \in \N_0$,
		as $\zeta \to \infty$
		\begin{equation}\label{A-trace-2}
		\begin{split}
	     \tr_{H} \langle A(s_0)+\zeta^2\rangle^{-\alpha}
	     \sim \zeta^{-\alpha} \Bl \sum_{j=0}^\infty \sum_{\ell=0}^{p_j} 
	     \w_{j \ell}(s_0) \zeta^{-j + \beta} \log^\ell(\zeta) \Br.
	     \end{split}
	     \end{equation}
		\end{enumerate}
      Then for any $\phi \in C^\infty_0((0,\infty) \times \R^b)$ with
      \footnote{$\varepsilon>0$ is fixed in Theorem \ref{parametrix-bounded}.} $\supp \phi \subset (\delta, \varepsilon) 
      \times B_\varepsilon(s_0)$ for some $0< \delta < \varepsilon$ and 
      $\alpha > 2p + b+1$,
	the Schwartz kernels of the monomials $\langle \Phi \circ \mathcal{G}(z) \rangle^{\alpha}$,
	restricted to the diagonal, admit asymptotic expansion, 
	uniformly in the parameters, as $z \to \infty$
		\begin{equation}
		\tr_H \langle \Phi \circ \mathcal{G}(z) \rangle^{\alpha} (x,x,s,s,z^2)	
		\sim z^{-\alpha} \Bl \sum_{j=0}^\infty \sum_{\ell=0}^{p_j} 
	     \sigma_{j \ell}(x, s) z^{-j + \beta + b+1} \log^\ell(z) \Br, 
	     \end{equation}
In case $A(s)$ depends on the additional parameter $s_1 \in \R^{b_1}$, with the 
assumptions 
$a)$, $b)$ and $c)$ holding locally independent of $s_1$, the asymptotics 
\eqref{trace-asymptotics-boundary}
is also locally uniform in the parameter $s_1$.	
\end{theorem}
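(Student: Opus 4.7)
The plan is to combine a Neumann-series reduction to the frozen-coefficient cone operator $\mathcal{G}(s_0, z)$ with the Fourier representation in the edge variable $s$, and then to couple the Weyl-type integral over $\sigma \in \R^b$ with the fibre-wise spectral asymptotics supplied by assumption c).

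\textbf{Step 1 (Reduction to $\mathcal{G}(s_0, z)$).} Expanding each factor of $\mathcal{G}(z)$ in the monomial by the Neumann series \eqref{G-inverse}, the quantity $\langle \Phi \circ \mathcal{G}(z)\rangle^\alpha$ is rewritten, modulo $O(z^{-N})$ remainders for any $N$ (by Theorem~\ref{parametrix-bounded} and Lemma~\ref{elliptic}), as a finite sum of fixed-factorization monomials built from $\mathcal{G}(s_0, z)$, the cutoff $\Phi$, and the perturbation $\chi(R - R_{s_0})$. Taylor-expanding $R - R_{s_0}$ in $(s - s_0)$ and commuting the polynomial factors past the intervening $\mathcal{G}(s_0, z)$'s produces differential operators of strictly lower order, each contributing a gain of $z^{-1}$, so only finitely many insertions need be retained at any fixed order of approximation.

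\textbf{Step 2 (Fourier and rescaling reduction to the Bessel resolvent).} For such a monomial of $\mathcal{G}(s_0, z)$, iterating Proposition~\ref{parametrix-bounded-0} and restricting to $s = \tilde s$, $x = \tilde x$ yields
\begin{equation*}
\tr_H \langle \Phi \circ \mathcal{G}(s_0, z)\rangle^\alpha(x, x, s, s) = \phi(x,s)^\alpha (2\pi)^{-b} \int_{\R^b} \tr_H \langle G(s_0, \sqrt{|\sigma|^2_{s_0} + z^2})\rangle^\alpha(x, x)\, d\sigma.
\end{equation*}
Rescaling $\sigma = z\tau$ exhibits the large parameter, producing the factor $z^b$ from the measure. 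The core technical input is then an asymptotic expansion as $\zeta \to \infty$, uniform in $x \in [\delta, \varepsilon]$,
\begin{equation*}
\tr_H \langle G(s_0, \zeta)\rangle^\alpha(x, x) \sim \zeta^{-\alpha + 1} \sum_{j=0}^\infty \sum_{\ell=0}^{p_j} \kappa_{j\ell}(x, s_0)\, \zeta^{-j + \beta} \log^\ell \zeta,
\end{equation*}
which is obtained by diagonalizing $A(s_0)$, passing to the variable $y = x\zeta$ so that $G(s_0, \zeta)$ becomes a $\zeta$-independent operator times $\zeta^{-1}$, and invoking the uniform Olver asymptotics \eqref{EqBesselestimates} to convert the iterated Bessel-kernel expressions $yK_\nu(y)I_\nu(y)$ into formal power series in $\zeta^{-1}$ whose $\tr_H$-traces reduce, by functional calculus, precisely to $\tr_H \langle A(s_0) + (x\zeta)^2\rangle^{-k}$-type quantities controlled by assumption c). The leading factor $\zeta^{+1}$ is the one-dimensional Weyl contribution from the $x$-direction, and together with the factor $z^b$ from the $\sigma$-measure accounts for the exponent $b+1$ in the conclusion.

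\textbf{Step 3 (Assembly and uniformity).} Substituting the expansion of Step 2 back into the $\tau$-integral and using $\log(z\sqrt{1+|\tau|^2_{s_0}}) = \log z + \tfrac{1}{2}\log(1+|\tau|^2_{s_0})$, the $\tau$-integrals converge absolutely thanks to the Schatten condition $\alpha > 2p + b + 1$ (Theorem~\ref{Schatten-thm}); collecting powers of $z$ and of $\log z$ yields precisely the claimed expansion with exponent $z^{-\alpha + \beta + b + 1}$ and coefficients $\sigma_{j\ell}(x, s)$. Uniformity in $s_0$, and in the secondary parameter $s_1$ when present, is inherited from the locally uniform hypotheses a)--c) and from the locally uniform bounds of Proposition~\ref{small} and Theorem~\ref{parametrix-bounded}. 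The main obstacle is the fibre-wise expansion in Step 2: extracting a pointwise-in-$x$ asymptotic expansion of the Bessel resolvent trace from the abstract spectral assumption c) on $A(s_0)$ requires commuting the $\tr_H$ trace past the uniform Bessel asymptotics of Olver and matching the resulting symbol coefficients with $\tr_H \langle A(s_0) + \,\cdot\,\rangle^{-k}$-expressions; once this is in place, the remaining steps are bookkeeping.
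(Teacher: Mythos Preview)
Your approach differs substantially from the paper's, and Step~2 contains a genuine gap.

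The paper does \emph{not} pass through the Bessel structure at all. Since $\supp\phi\subset(\delta,\varepsilon)\times B_\varepsilon(s_0)$ with $\delta>0$, the variable $x$ is an ordinary interior variable on $\supp\phi$, and $L$ is simply an elliptic operator on $\R^{1+b}$ with operator-valued (in $H$) coefficient $x^{-2}A(s)$. The paper therefore sets up a parametric pseudo-differential calculus with operator-valued symbols in the \emph{full} covariable $\xi\in\R^{1+b}$: the leading symbol is $b_0(y,\xi,z)=(\|\xi\|_y^2+x^{-2}A(s)+z^2)^{-1}$ and the higher $b_j$ are built by iterating $b_0\circ(P(\xi,D(y))b_0)^j$. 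The point is that each $b_j$ is, by construction, a monomial of the form $\langle A(s)+\zeta^2\rangle^{-\alpha}$ (with $\zeta^2=x^2\|\xi\|^2+x^2z^2$), so assumption~(c) applies \emph{directly} to $\tr_H b_j$. The diagonal kernel expansion then follows from the standard formula $K(y,y;z)\sim\sum_j\bigl(\int_{\R^{1+b}}\sigma_{m-j}(y,\xi,1)\,d\xi\bigr)z^{m-j+b+1}$ after checking that $\Phi\circ\mathcal G(z)\circ\Phi$ agrees with the explicit parametrix $\Phi\circ K_N\circ\Phi$ up to $O(z^{-N+2p+b+1})$.

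Your Step~2, by contrast, attempts to extract a pointwise-in-$x$ expansion of $\tr_H\langle G(s_0,\zeta)\rangle^\alpha(x,x)$ from the Olver asymptotics of $K_\nu I_\nu$. The difficulty you flag is real and, as stated, not resolved. The Olver expansion of $xK_\nu(x\zeta)I_\nu(x\zeta)$ produces terms like $(\nu^2+(x\zeta)^2)^{-1/2}$ multiplied by the polynomials $U_k(p)$ in $p=\nu(\nu^2+(x\zeta)^2)^{-1/2}$; summing over the spectrum yields traces of the form $\tr_H A^{k/2}(A+(x\zeta)^2)^{-m/2}$ with half-integer exponents. These are \emph{not} monomials $\langle A(s_0)+\zeta^2\rangle^{-\alpha}$ in the paper's sense (those are built from integer powers of $(A+z^2)^{-1}$ composed with elements of $\mathscr R^*_\bullet$), so assumption~(c) does not cover them. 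In the abstract setting you have no further information about $A(s_0)$ that would let you control such expressions. For the general monomials $\langle G\rangle^\alpha$ with non-commuting insertions from $\mathscr R^*_\bullet$ the situation is worse still, since the Bessel kernel is no longer diagonal in the spectral decomposition of $A(s_0)$.

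The moral is that the interior-parametrix approach treats $(x,s)$ symmetrically and produces symbols that are already of the right monomial shape; your Fourier-in-$s$ plus Bessel-in-$x$ route breaks that symmetry and forces you through expressions that assumption~(c) does not control.
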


\begin{proof}
The statement is only partially a consequence of a classical parametric pseudo-differential calculus 
with operator-valued symbols. We set up the calculus explicitly and explain at which step additional 
arguments become necessary. First, we write $L(H)$ for bounded linear operators on the Hilbert space $H$,
choose any open neighborhood $U \in \R^{1+b}$, such that $\supp \phi \subset U$ and
define for any integer $m \in \Z$ a class of operator-valued symbols with parameters
\begin{equation}
\begin{split}
S^m&(U, \R^{1+b}, \Gamma = \R_+):= \{\sigma \in C^\infty (U \times \R^{1+b} \times \Gamma; L(H)) \mid
\\ & \| \partial^\beta_y \partial_\xi^\gamma \partial_z^\eta \sigma (y, \xi, z)\|_{H \to H} \leq C(\beta, \gamma, \eta, K) 
(1+ \|\xi\| + z)^{m-|\gamma|-\eta} \\
& \textup{where $K$ is a compact subset of $U$, containing $y$}\}.
\end{split}
\end{equation}
We also define a class of classical operator-valued symbols with parameter
\begin{equation}
\begin{split}
CS^m&(U, \R^{1+b}, \Gamma):= \{\sigma \in S^m(U, \R^{1+b}, \Gamma) \mid \sigma \sim \sum_{j=0}^\infty \sigma_{m-j}
\\ & \textup{i.e. for any $N \in \N$: } \sigma - \sum_{j=0}^{N-1} \sigma_{m-j} \in S^{m-N}(U, \R^{1+b}, \Gamma)
\\ & \left. \textup{and where for any $\lambda \geq 1$: } \sigma_{m-j} (y, 
\lambda \xi, \lambda z) = \lambda^{m-j}
\sigma_{m-j} (y, \xi, z) \right\}.\end{split}
\end{equation}
The standard parameter-elliptic pseudo-differential theory extends to the case of 
operator-valued symbols ad verbatim. We denote by $CL^m(U, \Gamma)$ the space of properly supported 
classical pseudo-differential operators defined by the symbols $CS^m(U, \R^{1+b}, \Gamma)$.
Then standard arguments show that for any $K \in CL^m(U, \Gamma)$ with $m < - 
(b+1)$ and the classical 
symbol $\sigma_K \sim \sum \sigma_{m-j}$, $K=K(z)$ admits a continuous integral kernel 
$K(y, \widetilde{y}; z) \in L(H)$, which depends smoothly on $z \in \Gamma$ with an asymptotic expansion
as $z \to \infty$
\begin{align}\label{parametric-calculus-theorem}
K(y, y; z) \sim \sum_{j=0}^\infty \left( \int_{\R^{b+1}} \!\!\! \sigma_{m-j}(y, \xi, 1) \dj \xi \right) z^{m-j+b+1}.
\end{align}
We want to apply this result to $\langle \Phi \circ \mathcal{G}(z) \rangle^{\alpha}$. However, 
this does not work naively, since due to the Remark \ref{monomials-definition-special}, we need 
to ensure that the resulting symbols are comprised of monomials 
\eqref{A-monomials}. Only then do 
we know that the symbols are indeed $L(H)$-valued and the classical theory as well as the assumption 
\eqref{A-trace-2} applies. 

Therefore we first construct a parametrix for $\langle \Phi \circ \mathcal{G}(z) \rangle^{\alpha}$ in explicit
terms "by hand" and then apply the parameter-elliptic pseudo-differential theory with operator-valued 
symbols to the parametrix. Write $y:= (x,s) \in \R_+ \times \R^{b}$ and set in the notation of Theorem \ref{parametrix-bounded}
\begin{equation}
\A(y) := x^{-2} A(s), \quad D^2(y):= -\partial^2_x + \chi  \cdot  (R-R_{s_0}) 
+ \Delta_{\R^b,s}.
\end{equation}
Under this notation $L=D^2(y) + \A(y)$. In fact, we denote 
any operator of the form $\sum\limits_{|\alpha| \leq 1} a_\alpha(y) \partial_y^\alpha, 
a_\alpha \in \mathscr{R}^{1-|\alpha|}_{\bullet}$, by $D(y)$. 
We find for any $\xi \in \R^{1+b}$
\begin{align*}
e^{-iy\xi} \circ \left(D^2(y) + \A(y) + z^2 \right) \circ e^{iy\xi}
&= \|\xi\|^2_y + \A(y) + z^2 + D^2(y) + \xi D(y)
\\ &=: \|\xi\|^2_y + \A(y) + z^2 + P(\xi, D(y)).
\end{align*}
where $\xi D(y)$ is interpreted as a sum of $\xi$-components times an 
operator $D(y)$.
We define iteratively 
\begin{equation}
\begin{split}
b_0 (y,\xi, z) &:= (\|\xi\|^2_y + \A(y) + z^2)^{-1}, \\
b_j(y,\xi, z) &:= (-1)^j b_0(y,\xi, z) \circ \left( P(\xi, D(y)) b_0(\xi, z; 
y) \right)^j,
\end{split}
\end{equation}
where $b_0(y,\xi, z)$ and $(P b_0)(y,\xi, z)$ are bounded linear operators on 
$H$ with parameters 
$(y,\xi, z)$. In fact more precisely we have
\begin{align}
\phi \cdot b_0 \in CS^{-2}(U, \R^{1+b}, \Gamma) \ \textup{and} \ 
\phi \cdot b_j \in CS^{-2-j}(U, \R^{1+b}, \Gamma).
\end{align}
We obtain by construction for any $N \in \N$
\begin{align*}
e^{-iy\xi} \circ \left(D^2(y) + \A(y) + z^2 \right) \circ e^{iy\xi} \circ \sum_{j=0}^{N-1} b_j
= \textup{Id} + (-1)^{N-1} \left( P b_0 \right)^N.
\end{align*}
We can now define an interior parametrix as follows. For any smooth compactly 
supported test function $u\in C^\infty_0(\R^{1+b}, H)$ 
and any $y:= (x,s) \in \R_+ \times \R^{b}$ we set ($\dj \xi := (2\pi)^{-b-1} d \xi$)
\begin{equation}
\begin{split}
(K_N u)(y) \equiv (\textup{Op} \left(\sum_{j=0}^{N-1} b_j\right) u)(y) 
:= \int_{\R^{1+b}} e^{iy\xi} \sum_{j=0}^{N-1} b_j (\xi, z; y) \widehat{u}(\xi) \dj \xi, \\
(P_N u)(y) \equiv (\textup{Op} \left(P b_0)^N\right) u)(y)
:= \int_{\R^{1+b}} e^{iy\xi} (P b_0)^N (\xi, z; y) \widehat{u}(\xi) \dj \xi.
\end{split}
\end{equation}
We call $K_N$ an interior parametrix and 
conclude by construction 
\begin{align*}
\left(D^2(y) + \A(y) + z^2 \right)\circ K_N
= \textup{Id} + P_N.
\end{align*}
Consider a cutoff function $\psi \in C^\infty_0(\R_+ \times \R^b)$ such that
$\psi \restriction \supp \phi \equiv 1$ and $\supp \phi \cap \supp d\psi = \varnothing$.
We denote the corresponding multiplication operators by $\Phi$ and $\Psi$ and compute 
\begin{align*}
&\left(D^2(y) + \A(y) + z^2 \right) \circ \Psi \circ K_N \circ \Phi 
\\ &= \Phi + \Psi \circ P_N \circ \Phi + [\left(D^2(y) + \A(y) + z^2 \right), 
\Psi] \circ K_N \circ \Phi.
\end{align*}
Multiplying from the left by $\Phi \circ \mathcal{G}(z)$ we find using $\phi \cdot \psi = \phi$
\begin{equation}\label{interior-parametrix-expression}
\begin{split}
\Phi \circ \mathcal{G}(z) \circ \Phi &=  \Phi \circ K_N \circ \Phi  - \Phi \circ \mathcal{G}(z) \circ \Psi \circ P_N \circ \Phi 
\\ &- \Phi \circ \mathcal{G}(z) \circ [\left(D^2(y) + \A(y) + z^2 \right), \Psi] \circ K_N \circ \Phi.
\end{split}
\end{equation}
For $N>2p$, $(P b_0)^N (\xi, z; y)\in C_1(H)$ is trace class by Assumption (b). 
As in \cite[Lemma 2.2]{BS2} we conclude that
\begin{align*}
\| (P b_0)^N (\xi, z; y) \|_{\textup{tr},H} \leq C_y (1+\|\xi\|_y +z)^{-N+2p},
\end{align*}
with constant $C_y>0$ locally uniform in $y$.
Consequently, for $N> 2p + b + 1$ we conclude 
for some other locally uniform constants $C_{y,1}, C_{y,2}, C_{y,3}>0$
\begin{align*}
&\|  \Bigl( \Phi \circ \mathcal{G}(z) \circ \Psi \circ P_N \circ \Phi \Bigr)(y,y) 
\|_{\textup{tr},H} \\ &\leq C_{y,1}  \int \| (P b_0)^N (\xi, z; y) 
\|_{\textup{tr},H} \, \dj \xi \, 
\\ &\leq C_{y,2} \int (1+\|\xi\|_y +z)^{-N+2p} \dj \xi
\leq C_{y,3} (1+z)^{-N+2p + b + 1}.
\end{align*}
By the same classical interior argument as in Lemma \ref{elliptic} we find for any $y\neq y'$ and $N\in \N$
\begin{align*}
\| \left( [\left(D^2(y) + \A(y) + z^2 \right), \Psi] \circ K_N \circ \Phi\right) (y,y')\|_{\textup{tr},H} \leq C_N (1+z)^{-N}
\end{align*}
where the constant $C_N>0$ is locally independent of $\|y-y'\|>0$.
Since the $H$-norm of the integral kernel of $\mathcal{G}(z)$
away from the diagonal, is uniformly bounded as $z\to \infty$, this yields an estimate for the third term on the right hand side of 
\eqref{interior-parametrix-expression} for any $N\in \N$ and a constant $\tilde 
C_N>0$, 
\begin{equation*}
\| \Bigl( \Phi \circ \mathcal{G}(z) \circ [\left(D^2(y) + \A(y) + z^2 \right), \Psi] \circ K_N \circ \Phi \Bigr) (y,y)  \|_{\textup{tr},H}
\leq \tilde C_N (1+z)^{-N},
\end{equation*}
where $\tilde C_N$ depends 
on $N$ and it is locally independent on $y$.
Consequently, if we consider any monomial $\langle \Phi \circ \mathcal{G}(z) 
\rangle^{\alpha}$, 
$\alpha \in \N$, replace in this monomial $\mathcal{G}(z)$ by $K_N$ and 
denote 
the
resulting monomial by $\langle \Phi \circ K_N \rangle^{\alpha}$, we conclude
as $z\to \infty$
\begin{align*}
\tr_H \left( \langle \Phi \circ \mathcal{G}(z) \rangle^{\alpha} 	
- \langle \Phi \circ K_N \rangle^{\alpha}\right) (x,x,s,s,z^2) = O(z^{-N + 2p + b + 1}),
\end{align*}
where the $O$-constant depends on $N$ and the support of $\phi$,\ie on $\delta, 
\varepsilon > 0$.
Hence it suffices to prove the statement for $ \langle \Phi \circ K_N \rangle^{\alpha}$. Note that by 
construction
\begin{align*}
\Phi \circ K_N \in CL^{-2}(U, \Gamma), \ \textup{with symbol} \
\phi \cdot \sum_{j=0}^{N-1} b_j \in CS^{-2}(U, \R^{1+b}, \Gamma).
\end{align*}
Since the symbol of a composition of operators is given asymptotically in terms of $(y,\xi)$
derivatives of the individual symbols, we may now apply \eqref{parametric-calculus-theorem}
to the monomial $\langle \Phi \circ K_N \rangle^{\alpha}$ and conclude the statement from \eqref{A-trace-2}
\end{proof}

\subsection{Resolvent trace asymptotics on an abstract edge}
We can now state the main theorem of this section.

\begin{theorem}\label{BS-extended-thm}
	Assume the following is true.
	\begin{enumerate}
		\item[a)] The assumption \eqref{iterative-assumption} is satisfied and $A(s_0) > \frac{9}{4}$ for all $s_0\in \R^b$.
		\item[b)] $(A(s_0)+1)^{-1}$ is in the Schatten Class $C_p(H)$ for 
		some $p> 0$ and for all $s_0 \in \R^b$.
		In particular the monomials $\langle A(s_0)+z^2\rangle^{-\alpha}$ are 
		trace class if $\alpha > 2p$. 
		\item[c)] For $\alpha > 2p$ the monomials $\langle 
		A(s_0)+z^2\rangle^{-\alpha}$
		admit an asymptotic expansion, uniformly in the parameter $s_0 \in \R^b$, for some $\beta \in \N_0$,
		as $\zeta \to \infty$
		\begin{equation}\label{A-trace}
		\begin{split}
	     \tr_{H} \langle A(s_0)+\zeta^2\rangle^{-\alpha}
	     \sim \zeta^{-\alpha} \Bl \sum_{j=0}^\infty \sum_{\ell=0}^{p_j} 
	     \w_{j \ell}(s_0) \zeta^{-j + \beta} \log^\ell(\zeta) \Br.
	     \end{split}
	     \end{equation}
		\end{enumerate}
		
	Then for any $\phi \in C^\infty_0([0,\infty) \times \R^b)$  
	the monomials $\langle \Phi \circ \mathcal{G}(z) \rangle^{\alpha}$ are in the Schatten class 
	$C_{\frac{2p+b+1}{\alpha}}(H)$ and for $\alpha > 2p + b+1$ it is trace 
	class. If moreover\footnote{$\varepsilon>0$ is fixed in Theorem 
	\ref{parametrix-bounded}.}
	$\supp \phi \subset [0,\varepsilon) \times B_\varepsilon(s_0)$, then for 
	$\alpha > 2p + b+1$ we obtain an
	asymptotic expansion as $z \to \infty$
		\begin{equation}\label{trace-asymptotics-boundary}
		\begin{split}
		\tr \langle \Phi \circ \mathcal{G}(z) \rangle^{\alpha}
		\sim z^{-\alpha} \cdot \Bl \sum_{j=0}^\infty a_{j} \, z^{-j + b}
	     &+  \sum_{j=0}^\infty  \sum_{\ell=0}^{p_j} c_{j \ell} \, z^{-j + \beta + b +1} \log^\ell(z)  
	     \\ &+ \sum_{j=\beta}^\infty \sum_{\ell=0}^{p_j} d_{j \ell} \, z^{-j + \beta + b +1} \log^{\ell+1}(z)\Br.
		\end{split}
		\end{equation}
In case $A(s)$ depends on the additional parameter $s_1 \in \R^{b_1}$, with the 
assumptions 
$a)$, $b)$ and $c)$ holding locally uniformly on $s_1$, the asymptotics 
\eqref{trace-asymptotics-boundary}
is also locally uniform in the parameter $s_1$.	
\end{theorem}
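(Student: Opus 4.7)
The plan is to assemble the statement from three ingredients already established in the paper, plus one new analytic input. The Schatten-class bound will come from Theorem \ref{Schatten-thm} by a Hölder argument; the middle series in \eqref{trace-asymptotics-boundary} will come from the interior parametrix of Theorem \ref{interior-parametrix}; and the remaining two series, carrying the asymptotics generated at $x=0$, will be extracted by feeding the hypothesis (c) into the Singular Asymptotics Lemma (SAL) with parameters from the Appendix.

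For the Schatten-class statement, I would first observe that a monomial $\langle \Phi\circ\mathcal{G}(z)\rangle^\alpha$ of degree $-\alpha$ is built from blocks $\mathcal{G}(z)$ and $P\circ\mathcal{G}(z)$, $P\in\mathscr{R}^2_{\bullet+1}$, whose individual degrees lie in $\{-2,-1,0\}$; hence there are at least $\lceil\alpha/2\rceil$ blocks. Each block, cut off by $\phi$, is bounded on $L^2$ and, by Theorem \ref{Schatten-thm}, is in $C_{p+(b+1)/2}$. Hölder's inequality for Schatten norms then yields $\langle\Phi\circ\mathcal{G}(z)\rangle^\alpha\in C_{(2p+b+1)/\alpha}$, which is trace class as soon as $\alpha>2p+b+1$.

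For the trace expansion I split the cutoff as $\phi=\phi_0+\phi_1$, with $\phi_1\in C^\infty_0((\delta,\varepsilon)\times B_\varepsilon(s_0))$ interior and $\supp\phi_0\subset[0,2\delta)\times B_\varepsilon(s_0)$ near the boundary face $\{x=0\}$, and expand $\langle\Phi\circ\mathcal{G}(z)\rangle^\alpha$ additively. The purely-interior monomials fall exactly under Theorem \ref{interior-parametrix}, and integration of its pointwise diagonal expansion over the compact support produces the contribution $z^{-\alpha}\sum_{j,\ell}c_{j\ell}z^{-j+\beta+b+1}\log^\ell z$. The cross terms, in which a $\Phi_0$-factor and a $\Phi_1$-factor bracket a product of resolvents, decay faster than any power of $z$ by Lemma \ref{elliptic} applied with the $\pi_1$-projection (the $x$-supports being disjoint), so they only contribute to the remainder.

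The heart of the matter is the boundary monomial $\langle\Phi_0\circ\mathcal{G}(z)\rangle^\alpha$. Using the Neumann-series identity \eqref{G-inverse}, I reduce to the frozen-coefficient parametrix $\mathcal{G}(s_0,z)$, the perturbation terms $(\chi\circ(R-R_{s_0})\circ\mathcal{G}(s_0,z))^j$ being handled inductively and producing asymptotic contributions of the same form; the series may be truncated at any prescribed order with a remainder $O(z^{-N})$ exactly as in the proof of Theorem \ref{interior-parametrix}. For $\mathcal{G}(s_0,z)$ I would Fourier-transform in the edge variable $s$ and use the explicit Bessel kernel from the proof of Proposition \ref{elliptic-cone}, writing the diagonal trace schematically in the form
\begin{equation*}
\int_{\R^b}\!\!\!\int_0^{2\delta}\phi_0(x,s)^\alpha\,\tr_H K(x,x;s;z)\,dx\,ds
=\int_0^{2\delta}\!\!\!\int_{\R^b}\phi_0(x,s)^\alpha\,x\,F(x,xz,s)\,dx\,ds,
\end{equation*}
where $F$ is built, via spectral decomposition of $A(s)$, from products of modified Bessel functions and the tangential operator valued trace. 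Hypothesis (c) then gives, uniformly in $s$, the asymptotic expansion of $F(x,xz,s)$ needed to invoke the SAL with parameters; the non-logarithmic series $\sum a_j z^{-j+b}$ arises from the regular (smooth) part of the SAL expansion, while the two logarithmic series $\sum c_{j\ell}z^{-j+\beta+b+1}\log^\ell z$ and $\sum d_{j\ell}z^{-j+\beta+b+1}\log^{\ell+1} z$ come from the singular part of the lemma, the extra power of $\log z$ being produced when a term in \eqref{A-trace} that already carries $\log^\ell\zeta$ is integrated against a SAL-singular profile.

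The main obstacle I anticipate is the verification of the SAL hypotheses with parameters for this operator-valued integrand: smoothness in $s$ (provided by the commutator bounds $C^j(\mathcal{G}(z))$ from Theorem \ref{parametrix-bounded}), the uniform trace-norm control in $s$ (provided by the uniform version of Theorem \ref{Schatten-thm}), and the precise structural factorization $x\cdot F(x,xz,s)$ of the diagonal kernel together with the requisite uniform remainder estimates on partial asymptotic sums in \eqref{A-trace}. Once these are checked, the SAL produces the three series in the stated form, and the uniformity in an auxiliary parameter $s_1$ follows from the corresponding uniform versions of the inputs (a)--(c) and of Theorems \ref{parametrix-bounded}--\ref{Schatten-thm}.
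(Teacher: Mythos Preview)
Your Schatten-class argument via H\"older is correct and matches the paper's use of Theorem~\ref{Schatten-thm}. Your route to the trace expansion, however, differs from the paper's and contains a real gap.

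The paper does \emph{not} decompose $\phi=\phi_0+\phi_1$ into boundary and interior pieces. Instead it introduces the scaling-to-the-base-point map $U_t f(x,s)=t^{(b+1)/2}f(tx,s_0+t(s-s_0))$, computes the rescaled operator $L_t:=t^2 U_t L U_t^*$, and derives the identity
\[
\widetilde\sigma(t,s,\zeta):=\tr_H\langle\Phi\circ\mathcal{G}\rangle^\alpha\Bigl(t,t,s,s,\tfrac{\zeta^2}{t^2}\Bigr)
= t^{\alpha-b-1}\,\tr_H\langle\Phi_t\circ\mathcal{G}_t\rangle^\alpha(1,1,s,s,\zeta^2).
\]
This is the step you are missing: the right-hand side is the diagonal kernel at the \emph{interior} point $x=1$, so Theorem~\ref{interior-parametrix} applies directly to the rescaled monomial and furnishes the $\zeta\to\infty$ expansion of $\widetilde\sigma$ (uniformly in $t$) required as hypothesis~(a) of the SAL. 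A single application of SAL to $\sigma(t,\zeta)=\int_{\R^b}\widetilde\sigma(t,s,\zeta)\,ds$ then produces all three series in \eqref{trace-asymptotics-boundary} simultaneously; the $c_{j\ell}$-series is the ``interior'' part of the SAL output, not something obtained from a separate cutoff. The same scaling, combined with the uniform Schatten bounds of Theorem~\ref{Schatten-thm}, also verifies the integrability hypothesis~(b) of SAL along the lines of \cite[Lemma~5.5]{BS3}.

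Your proposal to obtain the factorization $x\cdot F(x,xz,s)$ for the boundary piece by explicit Bessel kernels plus the Neumann series \eqref{G-inverse} does not close as stated. For the frozen parametrix $\mathcal{G}(s_0,z)$ the Bessel kernel indeed depends on $x$ only through $xz$, but once the $(x,s)$-dependent perturbation $\chi(R-R_{s_0})$ is sandwiched between frozen resolvents, that clean scaling is destroyed; the diagonal of the $j$-th Neumann term is not a function of $xz$ alone, and you have no mechanism to recover the SAL input structure. The scaling identity above is exactly what restores it, by pushing the $(x,s)$-dependence into the smooth $t$-dependence of $L_t$ and evaluating at $x=1$. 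Without it your boundary analysis remains a sketch, and your interior/boundary split additionally forces you to track $\delta$-dependent coefficients in both pieces and argue that they cancel, which is avoided entirely in the paper's approach.
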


\begin{proof} 
We follow the argument of Br\"uning-Seeley \cite[Lemma 5.4, (6.4a - 6.4c)]{BS3},
which has been written out for $\mathcal{G}(z)$, but extends similarly to monomials 
$\langle \Phi \circ \mathcal{G}(z) \rangle^{\alpha}$. 
The Schatten class property of monomials $\langle \Phi \circ \mathcal{G}(z) \rangle^{\alpha}$
and their commutators $C^j (\langle \Phi \circ \mathcal{G}(z) \rangle^{\alpha})$ for any 
multi-index $j \in \N^k$ with $|j|\leq b$, follows from Theorem \ref{Schatten-thm} and the H\"older 
property of Schatten norms. Consequently, by the Trace Lemma in \cite[Lemma 4.3]{BS3} we obtain
for $\alpha > 2p + b+1$
\begin{equation}
	\begin{aligned}
	\tr \langle \Phi \circ \mathcal{G}(z) \rangle^{\alpha} &= \int_0^\infty 
	\int_{\R^b}  
	\tr_{H} \langle \Phi \circ \mathcal{G} \rangle^{\alpha}  (x,x,s,s,z^2) 
	\;ds dx \\
	&=\int_0^\infty \int_{\R^b} \widetilde{\sigma} (x,s,xz) \; ds dx \\
	&= \int_0^\infty \sigma (x,xz) \; dx,
	\end{aligned}
	\end{equation}
	where we introduced, corresponding to \cite[Eq. (6.2)]{BS3},
	\begin{equation}
	\begin{split}
	&\widetilde{\sigma}(x,s,\zeta):= \tr_{H} 
	\langle \Phi \circ \mathcal{G} \rangle^{\alpha} \Bl 
	x,x,s,s,\frac{\zeta^2}{x^2}\Br, \\
	 &\sigma (x,\zeta):= \int_{\R^b} \widetilde{\sigma}(x,s,\zeta) ds.
	 \end{split}
	\end{equation}
	The asymptotic expansion is obtained from the scaling properties of $\mathcal{G}$ under the 
	\emph{scaling to the base point} map for a fixed $(0,s_0)\in [0,\infty)\times\R^b$ and parameter 
	$t \in (0,\infty)$
	\begin{equation} \begin{split}
	&u_t: [0,\infty)\times\R^b \to  [0,\infty)\times\R^b, u_t(x,s) := (t x, s_0+t(s-s_0)), \\
	&(U_t f)(x,s) := t^{\frac{b+1}{2}} (u_t^*f)(x,s) = t^{\frac{b+1}{2}} f(tx, s_0+t(s-s_0)).
	\end{split} \end{equation} 
	One computes explicitly (setting $s_0=0$ for notational simplicity)
	\begin{equation}\label{rescaled-operator} 
	L_t := t^2 U_t L U_t^* = - \, \b_x^2 + X^{-2} A(s_0) + \Delta_{\R^b,s_0}
	+ \chi_t \circ (R_{ts} - R_{(ts, s_0)}),
	\end{equation}
	where we set $\chi_t(x,s) := \chi(tx, ts)$ and we use te notation 
	$R_{ts}$ and $R_{(ts,s_0)}$ to indicate that the variable $s$ is replaced 
	by $ts$ in the coefficients,\cf \cite[Eq. (5.14)]{BS3}.
	This scaling property is a
	general consequence of the regular-singular structure of the abstract model operator Definition \ref{Delta-edge}.
	
	Write $\cG_t(z)$ for the inverse of the rescaled operator $(L_t + z^2)$, 
	which is constructed analogously to 
	\eqref{G-inverse}. Write $\phi_t(x,s) := \phi(tx, ts)$ and $(\mathscr{R}^2_{\bullet + 1})_t := 
	t^2 U_t ( \mathscr{R}^2_{\bullet + 1}) U_t^*$. Consider the monomials 
	$\langle \Phi_t \circ \cG_t(z) \rangle^{\alpha}$ composed of $\Phi_t \circ \cG_t(z)$ 
	and $\Phi_t \circ (\mathscr{R}^2_{\bullet + 1})_t \circ \cG_t(z)$. Then we find as
	in \cite[Eq. (5.29)]{BS3}
	\begin{equation}
	 \langle \Phi \circ \mathcal{G} 
	 \rangle^{\alpha}(x,\widetilde{x},s,\widetilde{s},z^2)
	 = t^{\alpha-b-1} \langle \Phi_t \circ \cG_t \rangle^{\alpha}
	 \left(\frac{x}{t},\frac{\widetilde{x}}{t},\frac{s}{t},\frac{\widetilde{s}}{t},(t z)^2\right).
	\end{equation}
	We conclude exactly as in \cite[Eq. (6.3)]{BS3}, that
	\begin{equation}\label{rescaling}
	\widetilde{\sigma}(t,s,\zeta) = t^{\alpha-b-1} \tr_{H}
	\langle \Phi_t \circ \mathcal{G}_t \rangle^{\alpha} (1,1,s,s,\zeta^2).
	\end{equation}
	The equality \eqref{rescaling} reduces the analysis to the interior $(1,s) \in \R_+ \times \R^b$, where 
	the asymptotic expansion of the interior parametrix follows from Theorem \ref{interior-parametrix}. 
	More precisely, note that by definition, $\chi_t \restriction \supp \phi_t \equiv 1$ and hence 
	$$
	L_t \circ \Phi_t = \left(- \, \b_x^2 + X^{-2} A(ts) + \Delta_{\R^b,ts} + 
	\chi_t \circ R_{ts}\right) \circ \Phi_t.
	$$
We construct an interior parametrix for $\cG_t(z)$ by setting 
\begin{equation}
A_t(y) := X^{-2} A(ts), \quad D^2_t(y):= -\partial^2_x  + \Delta_{\R^b,ts}  + 
\chi_t  \cdot  R_{ts}
\end{equation}
We denote the corresponding interior parametrix, as constructed in Theorem \ref{interior-parametrix}, 
by $K_{N,t}$. The asymptotic expansion of $K_{N,t}$ along the diagonal is established in Theorem 
\ref{interior-parametrix} and is uniform in the parameter $t\in [0,\varepsilon)$. Now the asymptotic 
expansion of $\widetilde{\sigma}(t,s,\zeta)$ follows 
from the asymptotic expansion of $K_{N,t}$ and we obtain
	\begin{equation}
      \widetilde{\sigma}(t,s,\zeta) \sim \zeta^{-\alpha} \Bl \sum_{j=0}^\infty \sum_{\ell=0}^{p_j} 
	     \sigma_{j \ell}(t,s) \zeta^{-j + \beta + b+1} \log^\ell(\zeta) \Br,\quad \text{as}\;\; \zeta\to\infty, 
	\end{equation}
	where $\sigma_{j \ell}(t,s) = t^{\alpha-b-1}\sigma^0_{j \ell}(t,s)$ with 
	$\sigma^0_{j \ell}(t,s)$ 
	being smooth in $t$. Integrating in $s$, we obtain as $\zeta\to\infty$
	\begin{equation}
	\begin{aligned}
	\sigma(t,\zeta) &:= \int_{\R^b} \tilde{\sigma}(t,s,\zeta)ds\\
	&\sim \zeta^{-\alpha}  \Bl \sum_{j=0}^\infty \sum_{\ell=0}^{p_j} 
	\left(\int_{\R^b} \sigma_{j \ell}(t,s) ds\right)  \zeta^{-j + 
	\beta + b+1} \log^\ell(\zeta) \Br.
	\end{aligned}
	\end{equation}
	The integrability condition of the Singular Asymptotics Lemma (SAL)
	as stated in \cite[p. 372]{BS2}, is verified exactly as in \cite[Lemma 5.5]{BS3} using the scaling property
	\eqref{rescaling} and Theorem \ref{Schatten-thm}. As in \cite[Section 6]{BS3}, taking $\b_x\phi(0,s) = 
	0$, we obtain from the SAL \cite[p. 372]{BS2}, applied to $\sigma(t,\zeta)$, the following asymptotic expansion
	\begin{equation}
	\begin{aligned}
	\tr &\langle \Phi \circ \mathcal{G}(z) \rangle^{\alpha} \sim \sum_{l=0}^{\infty} z^{-l-1} 
	\regint_0^\infty\frac{\zeta}{l!} \b_t^{(l)}\sigma(0,\zeta) \; d\zeta\\
	&+\sum_{j=0}^\infty \sum_{\ell=0}^{p_j} \regint_0^\infty \int_{\R^b} 
	\sigma_{j\ell}(x,s)(xz)^{-\alpha - j + \beta + b+1}  \log^\ell(xz) \; ds 
	dx\\
	&+\sum_{j=0}^\infty \sum_{\ell=0}^{p_j}  \int_{\R^b} 
	\b_t^{(\alpha + j - \beta - b-1)}\sigma_{j \ell}(0,s)\; ds \\ & \times 
	\frac{z^{-\alpha - j + \beta + b+1}\log^{\ell + 1} 
	z}{(\ell + 1)(\alpha + j - \beta - b-1)!},
	\end{aligned}
	\end{equation}
	where the regularized integrals in the first and second sums are defined using analytic
	continuation, see\eg \cite{Les1}. We are interested in terms containing 
	logarithms, 
	which are given due to rescaling
	\eqref{rescaling} by
	\begin{equation*}
      \int_{\R^b} 
	\phi(0,s)\b_t^{(\alpha + j - \beta - b-1)}(t^{\alpha-b-1}\sigma_{0j}(t,s))|_{t=0}\; ds \; 
	\frac{z^{-\alpha - j + \beta + b+1}\log^{\ell + 1} 
		z}{(\ell+1)(\alpha + j - \beta - b-1)!}.
	\end{equation*}
	These terms are non-zero only if $j \geq \beta$. 
	In case of an additional parameter $s_1 \in \R^{b_1}$, local uniformity 
	of the 
	trace norms and by Theorem \ref{Schatten-thm} the Schatten norm 
	estimates of $\Phi\circ \mathcal{G}(z)$ and $\Phi \circ 
	C^j(\mathcal{G}(z))$ are uniform in the additional parameter $s_1$ and 
	hence the integrability condition of SAL holds uniformly in 
	$s_1$.
	
	Therefore the asymptotic expansion for $\mathcal{G}(z)$ follows by
	an extension of the Singular Asymptotics Lemma (see the Appendix) to the 
	case of additional parameters. This concludes the proof.
\end{proof}

\begin{remark}
We add some remarks on the structure of the coefficients and a relation 
of the argument to the microlocal arguments performed by Mazzeo and Vertman 
\cite{MaVe} for the heat kernel on a simple edge. 

\begin{enumerate}
\item The coefficients $\{a_j\}$ are global
in the sense that they depend on the full symbol $\sigma(t,s,\zeta)$, more precisely
on the jets $\b_t^{(\ell)}\sigma(0,s,\zeta)$ for all $\ell \in \N$ and $\zeta \in \R_+$.
They arise from the scaling property of the operator \eqref{rescaling} and correspond
to coefficients in the heat trace expansion of \cite{MaVe}, arising from the front face asymptotics.

\item The coefficients $\{c_j\}$ are local in the sense that they depend only on the asymptotics
of the symbol $\sigma(t,s,\zeta)$ as $\zeta \to 0$, and in this sense they are \emph{interior}
do not detect the edge singularity. These coefficients correspond to 
coefficients in the heat trace expansion of \cite{MaVe}, arising from the temporal diagonal 
face asymptotics. 

\item Finally, the coefficients $\{d_j\}$ are local in the same sense as $\{c_j\}$, 
and correspond to an interaction between the front and the temporal diagonal faces in the 
sense of \cite{MaVe}.
\end{enumerate}
\end{remark}

\section{Expansion of the resolvent on a smoothly stratified 
pseudomanifold}\label{iterated-section}

Let $W_{d}$ be a compact smoothly stratified space of depth $d$ 
with an iterated cone-edge metric. We write $\Delta_d$ for the corresponding 
Hodge-Laplace operator.
We also write $\mathcal{V}_{\textup{ie},d}^2$ for the union of incomplete 
edge 
vector-fields $\mathcal{V}_{\textup{ie},d}$ and their second order 
compositions.
Corresponding to the notation for $\mathscr{R}_{\bullet}$ in \S 
\ref{monomials-section-1}, we define
	\begin{equation}
	\mathscr{R}_d := \mathcal{V}_{\textup{ie},d}, \quad
	\mathscr{R}_d^2 := \mathcal{V}_{\textup{ie},d}^2.
	\end{equation}
	Note that by \eqref{inverses}, the inverse $(\Delta_d+z^2)^{-1}$ 
	maps $L^2(W_d)$ into $\sH^{2,2}_e(W_d)$. The compositions 
	$\mathscr{R}_d^2 \circ (\Delta_d+z^2)^{-1}$ are therefore bounded, 
	since $\mathscr{R}_d^2:\sH^{2,2}_e(W_d) \to L^2(W_d)$.
     As in \S \ref{monomials-section-1} denote a monomial consisting of 
     compositions by
	$(\Delta_d+z^2)^{-1}$ and $R \circ (\Delta_d+z^2)^{-1}, R \in 
	\mathscr{R}_d^2$, of degree $(-\alpha)$ by
	\begin{equation}\label{Delta-monomials}
	\langle \Delta_d+z^2\rangle^{-\alpha}.
	\end{equation}
	We can now state our main theorem.
	
\begin{theorem}\label{main-theorem}
	Let $W_{d}$ be a compact smoothly stratified pseudomanifold with an 
	iterated cone-edge metric and depth $d$ satisfying the spectral Witt 
	condition in Definition \ref{Witt-stratified}, 
	such that the 
	tangential operators $A_\alpha(s) >\frac{9}{4}$ in each depth.
	Then the following statements hold.
		
	\begin{enumerate}    
	     \item[i)] The inverse $(\Delta_d+z^2)^{-1}$ is in the Schatten class 
	     $C_{\frac{\dim W_d}{2}+}(H)$, \ie $(\Delta_d+z^2)^{-1}\in C_{q}(H)$ 
	     for $2q>\dim W_q$.
	     In particular any monomial \eqref{Delta-monomials} of degree $\alpha 
	     > \dim W_d$ is trace class.
	     \item[ii)] For $\alpha > \dim W_d$ a monomial 
	     \eqref{Delta-monomials} 
	     admits an asymptotic expansion as $z \to \infty$
		\begin{equation*}
		\begin{split}
		\tr  \, \langle \Delta_d+z^2\rangle^{-\alpha}
		\sim z^{-\alpha} \cdot \Bl\sum_{j=0}^{\infty} a_j \, 
		z^{-j + \dim W_d} + \sum_{\{Y\}} \sum_{j=0}^{\infty} 
		\sum_{\ell=0}^{d(y)}
		c^Y_{j\ell} \cdot z^{-j + \dim Y} \log^\ell z\Br. 
		\end{split}
		\end{equation*}
		Note that the maximal possible power of $\log(z)$ terms is given by 
		the stratification depth $d(Y)$
		for any given stratum $Y$.
		\end{enumerate}
\end{theorem}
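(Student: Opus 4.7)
The plan is to argue by induction on the depth $d$, with the inductive statement strengthened to carry local uniformity in any auxiliary smooth parameter so that the hypothesis can be applied fiberwise to the tangential operators $A_\alpha(s)$. In the base case $d=0$, $W_0$ is a closed smooth Riemannian manifold, and both the Schatten class property and the polyhomogeneous resolvent trace expansion $\tr\langle \Delta_0+z^2\rangle^{-\alpha} \sim z^{-\alpha}\sum_j a_j z^{-j+\dim W_0}$ (with no log terms and empty singular sum) are classical consequences of the standard parameter-elliptic pseudodifferential calculus, including smooth parameter dependence.

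For the inductive step, assuming the result at depth $\le d-1$, I would fix a smooth partition of unity $\{\phi_0, \phi_\alpha^j\}$ on $W_d$ subordinate to the cover consisting of one open neighborhood of the regular set and finitely many tubular neighborhoods $\mathcal{U}_\alpha^j \subset \mathcal{U}_\alpha$ small enough for Theorem \ref{parametrix-bounded} to apply. Writing $\Phi_\gamma$ for the associated multiplication operators and splitting
\begin{equation*}
\tr\langle \Delta_d + z^2\rangle^{-\alpha} = \tr\bigl(\Phi_0 \circ \langle \Delta_d + z^2\rangle^{-\alpha}\bigr) + \sum_{\alpha,j}\tr\bigl(\Phi_\alpha^j \circ \langle \Delta_d + z^2\rangle^{-\alpha}\bigr),
\end{equation*}
on each piece I would replace the global resolvent by a local model resolvent (the smooth parameter-elliptic calculus on the interior piece, and the abstract edge operator $L$ of Definition \ref{Delta-edge} on each edge piece) and control the discrepancy via the rapid off-diagonal decay in Lemma \ref{elliptic}, which forces the error to contribute $O(z^{-N})$ for every $N$. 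The interior piece yields the $\{a_j\}$ contribution $z^{-j+\dim W_d}$ with no logarithms.

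Each edge piece is then handled by Theorem \ref{BS-extended-thm} applied with $H = L^2_*(F_\alpha)$, $Q(s) = Q_\alpha(s)$, $A(s) = A_\alpha(s)$, and the $\R^b$-factor locally parametrizing $Y_\alpha$. Its three hypotheses are verified as follows: (a) $A_\alpha(s) > 9/4$ is the spectral Witt condition \eqref{Witt-condition}; (b) the Schatten class property $(A_\alpha(s)+1)^{-1} \in C_p(H)$ with index $p = \dim F_\alpha/2 +$, uniformly in $s$, follows from the inductive hypothesis applied to the link $F_\alpha$ of depth $d(Y_\alpha)-1 \le d-1$, once one recalls that $A_\alpha(s)$ is a bounded shift of $\Delta_{F_\alpha,s}$ by the block-matrix formula following \eqref{laplace}; and (c) the uniform expansion of $\tr_H \langle A_\alpha(s)+\zeta^2\rangle^{-\alpha}$ with log powers bounded by $d(Y_\alpha)-1$ is precisely the inductive conclusion applied to $F_\alpha$. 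The output of Theorem \ref{BS-extended-thm} then assembles into the claimed expansion: its $a_j$ terms merge with the interior contribution; its $c_{j\ell}$ and $d_{j\ell}$ terms produce the singular contribution attached to $Y_\alpha$ with powers $z^{-j+\dim Y_\alpha}$ and log exponents at most $(d(Y_\alpha)-1)+1 = d(Y_\alpha)$, exactly matching the bound $d(Y)$ in the theorem.

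The main obstacle is ensuring that the inductive hypothesis is genuinely available in the uniform form required by Theorem \ref{BS-extended-thm}: both the asymptotic expansion for $A_\alpha(s)$ and its remainder estimates must hold locally uniformly in $s \in Y_\alpha$, which is precisely why Theorems \ref{Schatten-thm} and \ref{BS-extended-thm} were formulated with the explicit "additional parameter $s_1$" clause and why the inductive statement must be strengthened from the outset to carry this uniformity. A secondary bookkeeping point is that Theorem \ref{BS-extended-thm} raises the maximum log exponent by exactly one through its $d_{j\ell}$-terms, so the bound $\log^{d(Y)} z$ is sharp precisely when $Y$ sits at the maximum depth of its incident chain of strata. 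Finally, the Schatten index at depth $d$ is traced through the induction by Theorem \ref{Schatten-thm}, whose step $p \mapsto p + (b+1)/2$ with $b+1 = \dim Y_\alpha + 1$ accumulates to $(\Delta_d+z^2)^{-1} \in C_{\dim W_d/2 +}$, delivering assertion (i).
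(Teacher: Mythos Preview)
Your proposal is correct and follows essentially the same inductive strategy as the paper's proof: localization via a partition of unity adapted to the stratification, local abstract-edge parametrices from Theorem~\ref{parametrix-bounded}, off-diagonal control from Lemma~\ref{elliptic}, Schatten tracking via Theorem~\ref{Schatten-thm}, and fiberwise verification of the hypotheses of Theorem~\ref{BS-extended-thm} using the inductive hypothesis on the links (with the ``additional parameter'' clause carrying the required uniformity). The paper organizes the gluing slightly differently---it first assembles a single global parametrix $\mathcal{G}(z)=\sum_\bullet \Phi_\bullet\mathcal{G}_\bullet(z)\Psi_\bullet$ and then bounds $\|(\Delta_d+z^2)^{-\alpha}-\mathcal{G}(z)^\alpha\|_{\mathrm{tr}}$ directly---but this is equivalent to your trace-splitting approach.

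One bookkeeping slip: in the output \eqref{trace-asymptotics-boundary} of Theorem~\ref{BS-extended-thm} you have the roles of the $a_j$ terms and the $c_{j\ell},d_{j\ell}$ terms reversed. The $a_j$ terms carry exponent $-j+b=-j+\dim Y_\alpha$ with no logarithms (this is the new contribution of the stratum $Y_\alpha$ itself, coming from the front-face in the SAL), whereas the $c_{j\ell}$ and $d_{j\ell}$ terms carry exponent $-j+\beta+b+1=-j+\dim W_d$ and hold the logarithms inherited from the link expansion, augmented by one in the $d_{j\ell}$ case. Your overall log-power bound $(d(Y_\alpha)-1)+1=d(Y_\alpha)$ for the edge piece near $Y_\alpha$ is nonetheless correct, since the link $F_\alpha$ has depth $d(Y_\alpha)-1$; only the attribution of those log terms to a specific stratum in the final sum needs adjusting, and the argument goes through once that is fixed.
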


\begin{proof}
	Our proof proceeds by induction on the depth $d$ of the 
	stratification. If $d=0$ then $W_{0}$ is smooth and the statements are 
	clear by standard parameter elliptic calculus.
	If $d=1$, then $W_1$ is the simple edge case with smooth cross-section, 
	and the statements follow directly from  
	Theorem \ref{BS-extended-thm}, with a gluing argument as explained below. 
	We make the next iteration step $d=2$ 
	explicit, since it already captures the central ansatz. 
	Consider a compact smoothly stratified space $W_2$ with a singular neighborhood 
	$\cU$ as in Figure \ref{figure3} (cf. Figure \ref{figure2})

\begin{figure}[h]
	\includegraphics[scale=0.75]{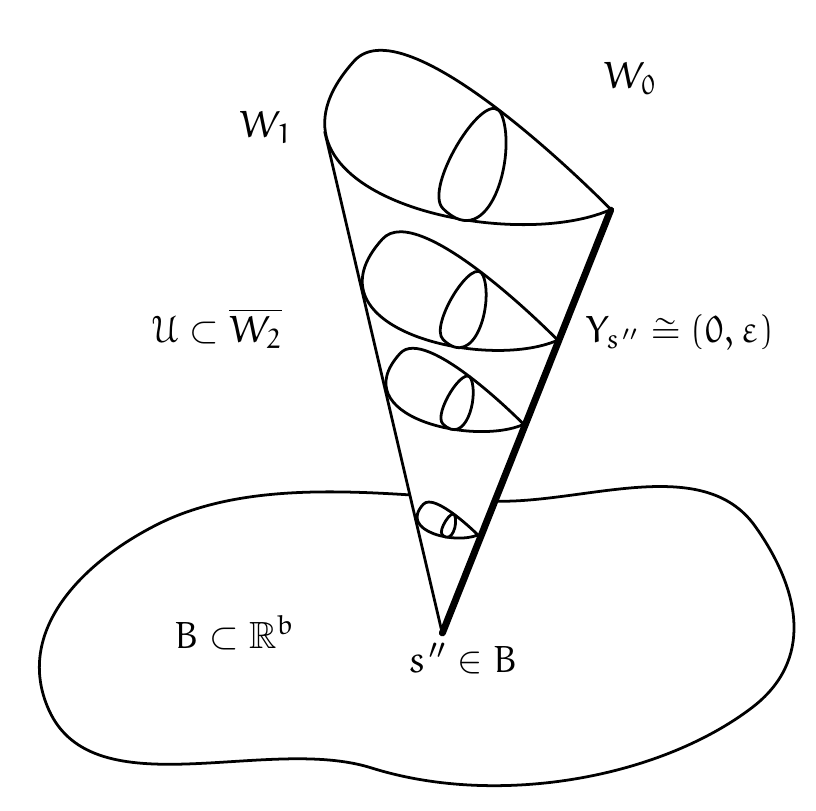}
	\caption{Tubular neighborhood $\cU\subset W_2$ of depth $2$.}
	\label{figure3}
\end{figure}

The singular neighborhood $\cU$ is given by a fibration $\phi_2$ of cones 
$C(W_1)=[0,\varepsilon) \times W_1 / \{0\} \times W_1$ over the base $B \subset \R^b$, for some $\varepsilon > 0$. 
The cross section $W_1$ is a compact smoothly stratified space of depth $1$,
here a space with an isolated conical singularity and singular neighborhood $C(W_0)=[0,
\varepsilon) \times W_0 / \{0\} \times W_0$ over a closed smooth manifold $W_0$. We write
\begin{equation}
\begin{split}
x_1: C(W_0) = [0,\varepsilon) \times W_0 /_{\sim} \to [0, \varepsilon), \\
x_2: C(W_1) = [0,\varepsilon) \times W_1 /_{\sim} \to [0, \varepsilon),
\end{split}
\end{equation}	
for the radial functions on the cones, given by the projections onto the first factor. 
The open interior of each cone $C(W_1)$ over $s'' \in B$ has an edge singularity along the base $Y_{s''} \cong 
(0,\varepsilon)$.
Note that in the notation of Fig. \ref{figure2}
\begin{equation}
Y_1 = \bigsqcup_{s''\in B} Y_{s''} \cong (0,\varepsilon) \times B \subset (0,\varepsilon) \times \R^{b}.
\end{equation}
The singular neighborhood of the stratum $Y_1$ is given by a fibration $\Phi_1$ of cones 
$C(W_0)$. The radial functions $x_1$ and $x_2$ extend naturally to the corresponding total spaces 
of fibrations $\phi_1$ and $\phi_2$ respectively
\begin{equation}
\begin{split}
x_1: \phi_1^{-1}(Y_1) \to [0, \varepsilon), \quad x_2: \phi_2^{-1}(B) \to [0, 
\varepsilon).
\end{split}
\end{equation}	
We may decompose $W_2$ into three parts 
\begin{equation}\label{decomposition}
W_2 = \phi_2^{-1}(B) \cup \phi_1^{-1}(Y_1) \cup W =: M_2 \cup M_1 \cup M_0,
\end{equation}
where $W= M_0 = W_{2,\reg}$.
Consider smooth cutoff functions $\phi, \psi \in C^\infty_0[0,\infty)$ with 
compact support 
$[0, \varepsilon)$ as in Figure \ref{fig:CutOff}. 	 
	
	\begin{figure}[h]
	 	\begin{center}
	 		
	 		\begin{tikzpicture}[scale=1.3]
	 		\draw[->] (-0.2,0) -- (6,0);
	 		\draw[->] (0,-0.2) -- (0,2.2);
	 		
	 		\draw (-0.2,2) node[anchor=east] {$1$};
	 		\draw (6,0) node[anchor=north] {$x$};
	 		\draw (0,-0.45) node {$0$};
	 		\draw (0,2) -- (5.5,2);
	 		\draw (0.5,2) .. controls (1.9,2) and (1.1,0) .. (2.5,0);
	 		\draw[dashed] (0.5,-0.2) -- (0.5,2.2);
	 		\draw[dashed] (2.5,-0.2) -- (2.5,2.2);
	 		\draw (3,2) .. controls (4.4,2) and (3.6,0) .. (5,0);
	 		\draw[dashed] (3,-0.2) -- (3,2.2);
	 		\draw[dashed] (5,-0.2) -- (5,2.2);
	 			 		
	 		\draw (1,1) node {$\psi$};
	 		\draw (3.6,1) node {$\phi$};
	 		\draw (5,-0.4) node {$\varepsilon$};
			
	 		\end{tikzpicture}
	 		
	 		\caption{The cutoff functions $\psi$ and $\phi$.}
	 		\label{fig:CutOff}
	 	\end{center}
	 \end{figure}

By construction their supports are related as follows.
\begin{equation}
\begin{split}
&\supp \psi \subset \supp \phi \subset [0,\varepsilon), \quad 
\phi \restriction \supp \psi \equiv 1, \\ &\supp d\phi \cap \supp \psi = 
\varnothing, 
\quad \supp (1-\phi) \cap \supp \psi = \varnothing. 
\end{split}
\end{equation}
We employ these cutoff functions to define a partition of unity subordinate to 
the decomposition \eqref{decomposition}.
There exist discrete families $\{s_{1j}\}_j \in Y_1 \subset (0,\varepsilon) \times \R^b$ and 
$\{s_{2j}\}_j \in B \subset \R^b$, such that setting for each $j$
\begin{equation}
\begin{split}
&\phi_{1j} \in C^\infty([0,\varepsilon) \times Y_1), \quad \phi_{1j}(x_1,s):= 
\phi(x_1)\cdot \phi(\|s-s_{1j}\|), \\
&\phi_{2j} \in C^\infty([0,\varepsilon) \times B), \quad \phi_{2j}(x_2,s):= 
\phi(x_2)\cdot \phi(\|s-s_{2j}\|), \\
&\phi_0 \in C^\infty_0(M_0), \quad \left( \phi_0 + \sum_j \phi_{1j} + \sum_j \phi_{2j}\right) 
\equiv 1, 
\end{split}
\end{equation}
defines a smooth partition of unity $\{\phi_0, (\phi_{1j})_j, (\phi_{2j})_j\}$ 
subordinate to the decomposition \eqref{decomposition}, where we have extended 
each $\phi_{1j}$ and $\phi_{2j}$ identically to $\phi_1^{-1}(Y_1)$ and 
$\phi^{-1}_2(B)$, respectively. Similarly, we define a smooth partition of 
unity $\{\psi_0, (\psi_{1j})_j, (\psi_{2j})_j\}$ using the cutoff function 
$\psi$. \medskip

We can now construct a parametrix for $(\Delta_2 + z^2)$,
where we denote multiplication by cutoff functions operators by its corresponding capital 
letter, and write $\{\delta \Phi_*\}$ for any linear combination of derivatives of $\Phi_*$ with smooth
coefficients. 

\subsection*{Interior parametrix over $M_0$}
By standard elliptic calculus, $(\Delta_2 + z^2)$ admits an interior parametrix $\mathcal{G}_0(z)$
over $M_0$. We denote by $\langle \mathcal{G}_0(z) \rangle^\alpha$ monomials of degree 
$(-\alpha)$, composed of the interior parametrix $\mathcal{G}_0(z)$ and differential operators 
over $M_0$, where the degree of $\mathcal{G}_0(z)$ is defined to be $(-2)$, the degree of a
differential operator is given by its differential order and the 
degree of the monomial is obtained as a sum of the degrees of the individual components.
\medskip
 
By standard elliptic calculus, we obtain the following properties
\begin{enumerate}
\item $\Phi_0 \circ \langle \mathcal{G}_0(z) \rangle^\alpha \circ \Psi_0$ and 
$\{ \delta \Phi_0\} \circ \langle \mathcal{G}_0(z) \rangle^\alpha \circ 
\Psi_0$ are bounded on $L^2(W_2)$, here $\delta \Phi_0$ stands for any (not 
necessarily the same) 
linear combination of derivatives of $\Phi_0$ with smooth coefficients. 
\medskip

\item $\| \{ \delta \Phi_0\} \circ \mathcal{G}^\alpha_0(z) \circ \Psi_0 \|_{L^2 \to L^2} \leq C_N z^{-N}$
for any $N \in \N$ and $\alpha \in \N$.\medskip

\item $\Phi_0 \circ \langle \mathcal{G}_0(z) \rangle^\alpha \circ \Psi_0$ and 
$\{ \delta \Phi_0\} \circ \langle \mathcal{G}_0(z) \rangle^\alpha \circ 
\Psi_0$ are in
$C_{\frac{\dim W_2}{\alpha}+}(H)$ and we have an asymptotic expansion for 
monomials 
of order $(-\alpha)$ with $\alpha > \dim W_2$
as $z\to \infty$
\begin{equation}
\tr \, \psi_0 \langle \mathcal{G}_0(z) \rangle^\alpha \sim z^{-\alpha} \sum_{\ell = 0}^\infty 
c^0_{\ell} \, z^{-2\ell + \dim W_2}.
\end{equation}
\end{enumerate} 
Note that $\Phi_0 \circ \langle \mathcal{G}_0(z) \rangle^\alpha \circ \Psi_0$ maps 
$L^2(W_2)$ into the second Sobolev space with compact support and 
hence into $\sH^{2,2}_e(W_2)$.

\subsection*{Boundary parametrix over $M_1$}
Since the statement holds in depth $1$, $(\Delta_1 + z^2)$ admits local 
boundary parametrices
$\mathcal{G}_{1j}(z)$ over $\supp \phi_{1j}$, satisfying the following properties 
\begin{enumerate}
\item $\Phi_{1j} \circ \langle \mathcal{G}_{1j}(z) \rangle^\alpha \circ \Psi_{1j}$ and 
$\{ \delta \Phi_{1j}\} \circ \langle \mathcal{G}_{1j}(z) \rangle^\alpha \circ 
\Psi_{1j}$ are bounded on $L^2(W_2)$.\medskip

\item $\| \{ \delta \Phi_{1j}\} \circ \mathcal{G}^\alpha_{1j}(z) \circ \Psi_{1j} \|_{L^2 \to L^2} \leq C_N z^{-N}$
for any $N \in \N$. \medskip

\item $\Phi_{1j} \circ\langle \mathcal{G}_{1j}(z) \rangle^\alpha \circ \Psi_{1j}$ and 
$\{ \delta \Phi_{1j}\} \circ \langle \mathcal{G}_{1j}(z) \rangle^\alpha \circ 
\Psi_{1j}$ are in $C_{\frac{\dim W_2}{\alpha}+}(H)$ 
and we have an asymptotic expansion for monomials of order $(-\alpha)$ with 
$\alpha >\dim W_2$
as $z\to \infty$
\begin{equation*}
\tr \, \psi_{1j} \langle \mathcal{G}_{1j}(z) \rangle^\alpha \sim z^{-\alpha} 
\left( \sum_{\ell = 0}^\infty c^{1j}_{\ell} \, z^{\ell + \dim W_2} + 
\sum_{\ell = 0}^\infty d^{1j}_{\ell} \, z^{-\ell + \dim Y_1} \log z\right).
\end{equation*}
\end{enumerate} 

The first property is due to Theorem \ref{parametrix-bounded}, which asserts boundedness
of $\mathcal{G}_{1j}(z)$ on $L^2(\R_+ \times Y_1, L^2(W_0))$. Boundedness on $L^2(W_2)$
follows from 
\begin{equation}
\begin{split}
&\Psi_{1j}: L^2(W_2) \to L^2(\R_+ \times Y_1, L^2(W_0)), \\
&\Phi_{1j}: L^2(\R_+ \times Y_1, L^2(W_0)) \to L^2(W_2).
\end{split}
\end{equation}
The second property is due to Lemma \ref{elliptic}. The third property is 
due to Theorem \ref{Schatten-thm} and Theorem \ref{BS-extended-thm}. 
Note that $\Phi_{1j} \circ \langle \mathcal{G}_{1j}(z) \rangle^\alpha \circ \Psi_{1j}$ maps 
$L^2(W_2)$ into $\sH_e^{2,2}(W_2)$ by Theorem \ref{parametrix-bounded}.

\subsection*{Boundary parametrix over $M_2$}
Using Theorem \ref{parametrix-bounded}, where $A=A_\alpha(s)$ is the tangential 
operator of $\Delta_2$ near $B \subset \R^b$, acting in the Hilbert space 
$H=L^2_*(W_1)$, we obtain for $\varepsilon >0$ sufficiently small 
a boundary parametrix $\mathcal{G}_{2j}(z)$ for $(\Delta_2 + z^2)$ over $\supp \phi_{2j}$
satisfying the following properties 
\begin{enumerate}
\item $\Phi_{2j} \circ  \langle \mathcal{G}_{2j}(z) \rangle^\alpha \circ \Psi_{2j}$ and 
$\{ \delta \Phi_{2j}\} \circ  \langle \mathcal{G}_{2j}(z) \rangle^\alpha 
\circ \Psi_{2j}$ are bounded on $L^2(W_2)$.\medskip

\item $\| \{ \delta \Phi_{2j}\} \circ \mathcal{G}^\alpha_{2j}(z) \circ \Psi_{2j} \|_{L^2 \to L^2} \leq C_N z^{-N}$
for any $N \in \N$. \medskip

\item $\Phi_{2j} \circ  \langle \mathcal{G}_{2j}(z) \rangle^\alpha \circ \Psi_{2j}$ and 
$\{ \delta \Phi_{2j}\} \circ  \langle \mathcal{G}_{2j}(z) \rangle^\alpha 
\circ \Psi_{2j}$ are in $C_{\frac{\dim W_2}{\alpha}+}(H)$
and we have an asymptotic expansion for monomials of order $\alpha > \dim W_2$
as $z\to \infty$
\begin{equation*}
\tr \, \psi_{12} \langle \mathcal{G}_{2j}(z) \rangle^\alpha \sim z^{-\alpha} 
\left( \sum_{\ell = 0}^\infty c^{2j}_{\ell} \, z^{\ell + \dim W_2} + 
\sum_{i=1}^2 \sum_{\ell = 0}^\infty d^{2j}_{i \ell } \, z^{-\ell + \dim Y_1} \log^i z\right).
\end{equation*}
\end{enumerate} 

The first property is due to Theorem \ref{parametrix-bounded}, which asserts boundedness
of $\mathcal{G}_{2j}(z)$ on $L^2(\R_+ \times B, L^2(W_0))$. Boundedness on $L^2(W_2)$
follows from 
\begin{equation}
\begin{split}
&\Psi_{2j}: L^2(W_2) \to L^2(\R_+ \times B, L^2(W_1)), \\
&\Phi_{2j}: L^2(\R_+ \times B, L^2(W_1)) \to L^2(W_2).
\end{split}
\end{equation}
The second property is due to Lemma \ref{elliptic}. The third property is 
due to Theorem \ref{Schatten-thm} and Theorem \ref{BS-extended-thm},
where the statement in depth $1$ is applied with $p = \frac{\dim W_1}{2}$.
Note that $\Phi_{2j} \circ  \langle \mathcal{G}_{2j}(z) \rangle^\alpha \circ \Psi_{2j}$ maps 
$L^2(W_2)$ into $\sH^{2,2}_e(W_2)$ by Theorem \ref{parametrix-bounded}.

\subsection*{Construction of a global parametrix}
We now can make an ansatz for the global parametrix of 
$(\Delta_2 + z^2)$ over $\supp \phi$
\begin{equation}
\begin{split}
\mathcal{G}(z) := \Phi_0 \circ \mathcal{G}_0(z) \circ \Psi_0 
&+ \sum_j \Phi_{1j} \circ \mathcal{G}_{1j}(z) \circ \Psi_{1j} \\
&+ \sum_j \Phi_{2j} \circ \mathcal{G}_{2j}(z) \circ \Psi_{2j}.
\end{split}
\end{equation}
Note that by construction, $\mathcal{G}(z)$ maps $L^2(W_2)$
into $\sH_e^{2,2}(W_2)$.
Writing $\Delta_2 = D^2$, where $D$ is the Gauss-Bonnet operator on $W_2$, 
we compute using the product rule for $D$
\begin{equation*}
\begin{split}
(\Delta_2 + z^2) \circ  \Phi_0 \circ \mathcal{G}_0(z) \circ \Psi_0  &= 
\Phi_0 + \sum_{k=0}^1\{ \delta \Phi_0\} \circ D^k \circ \mathcal{G}_0(z) \circ \Psi_0. 
\end{split}
\end{equation*}
Similar computations yield 
\begin{equation*}
\begin{split}
(\Delta_2 + z^2) \circ \mathcal{G}(z) = 
\textup{Id} &+  \sum_{k=0}^1 \{ \delta \Phi_0\} \circ D^k \circ \mathcal{G}_0(z) \circ \Psi_0
\\ & + \sum_j  \sum_{k=0}^1 \{ \delta \Phi_{1j} \} 
\circ D^k \circ \mathcal{G}_{1j}(z) \circ \Psi_{1j} \\
\\[-5mm] &+ \sum_j  \sum_{k=0}^1 \{ \delta \Phi_{2j} \} 
\circ D^k \circ \mathcal{G}_{2j}(z) \circ \Psi_{2j}.
\end{split}
\end{equation*}
Multiplying both sides of the equation with $(\Delta_2 + z^2)^{-1}$ from the left yields
\begin{equation*}
\begin{split}
(\Delta_2 + z^2)^{-1}  = \mathcal{G}(z)
&+  \sum_{k=0}^1 (\Delta_2 + z^2)^{-1} 
\circ \{ \delta \Phi_0\} \circ D^k \circ \mathcal{G}_0(z) \circ \Psi_0
\\ & + \sum_j \sum_{k=0}^1 (\Delta_2 + z^2)^{-1} \circ  \{ \delta \Phi_{1j} \} 
\circ D^k \circ \mathcal{G}_{1j}(z) \circ \Psi_{1j} \\
\\[-5mm] &+ \sum_j  \sum_{k=0}^1 (\Delta_2 + z^2)^{-1} \circ \{ \delta \Phi_{2j} \} 
\circ D^k \circ \mathcal{G}_{2j}(z) \circ \Psi_{2j}
\\ & \qquad \qquad \qquad =: \mathcal{G}(z) + S(z).
\end{split}
\end{equation*}
From the properties of the local parametrices listed above we conclude that
$(\Delta_2 + z^2)^{-1}$ and any monomial \eqref{Delta-monomials} is in the 
Schatten class $C_{\frac{\dim W_2}{2}+}(H)$.
This proves the first statement of the theorem in depth $2$. \medskip

We shall write $\{\mathcal{G}(z), S(z)\}^k$ for any non-commutative polynomial
in $\mathcal{G}(z)$ and $S(z)$ of order $k$. Due to the Schatten class properties, we conclude
\begin{equation*}
\mathcal{G}(z) \in C_{\frac{\dim W_2}{2}+}(H), \quad 
S(z) \in C_{\frac{\dim W_2}{3}+}(H), \quad 
\{\mathcal{G}(z), S(z)\}^k \in C_{\frac{\dim W_2}{3k}+}(H).
\end{equation*}
Taking $N$-th power of $(\Delta_2 + z^2)^{-1}$ yields the 
following expression for the difference between $(\Delta_2 + z^2)^{-N}$
and $\mathcal{G}(z)^N$
\begin{equation*}
\begin{split}
&(\Delta_2 + z^2)^{-N}  - \mathcal{G}(z)^N \\ &= \sum_\bullet \sum_{k=0}^1 \{\mathcal{G}(z), S(z)\}^{N-1}
\circ  (\Delta_2 + z^2)^{-1} \circ \{ \delta \Phi_\bullet\} \circ D^k \circ \mathcal{G}_\bullet (z) \circ \Psi_\bullet
\\ &= \sum_\bullet \sum_{k=0}^1 \{\mathcal{G}(z), S(z)\}^{N-1}
\circ  (\Delta_2 + z^2)^{-1} \circ D^k \circ \{ \delta \Phi_\bullet\} \circ \mathcal{G}_\bullet (z) \circ \Psi_\bullet,
\end{split}
\end{equation*}
where we remind the reader that $\{\delta \Phi_*\}$ stands for any (not necessarily the same) 
linear combination of derivatives of $\Phi_*$ with smooth coefficients. Moreover, the lower index $\bullet$
varies over $\{0, 1j, 2j \}$. Noting that $(\Delta_2 + \| \xi \|^2)$ can be written as $(D + i c(\xi))^2$ for any 
covariable $\xi \in T^*W_2$, the compositions $(\Delta_2 + z^2)^{-1} \circ D^k$
with $k \leq 1$ extend to bounded operators on $L^2$. Hence, for $\alpha > 
\dim W_2$
we obtain the trace norm estimate
\begin{equation}
\begin{split}
\| (\Delta_2 + z^2)^{-\alpha}  - \mathcal{G}(z)^{\alpha} \|_{\textup{tr}} 
&\leq \sum_\bullet \sum_{k=0}^1 \| \{\mathcal{G}(z), S(z)\}^{\alpha-1}  \|_{\textup{tr}}
\\ &\times \| (\Delta_2 + z^2)^{-1} \circ D^k \|_{L^2 \to L^2} 
\\ &\times  \| \{ \delta \Phi_\bullet\} \circ \mathcal{G}_\bullet (z) \circ \Psi_\bullet  \|_{L^2 \to L^2},
\end{split}
\end{equation}
Since the operator norms of the individual terms 
\begin{equation}
\{ \delta \Phi_0\} \circ \mathcal{G}_0(z)^\alpha \circ \Psi_0, \quad 
\{ \delta \Phi_{1j} \} \circ \mathcal{G}^\alpha_{1j}(z) \circ \Psi_{1j}, \quad  
\{ \delta \Phi_{2j} \} \circ \mathcal{G}^\alpha_{2j}(z) \circ \Psi_{2j},
\end{equation} 
are bounded by $C_N z^{-N}$ for any given $N\in \N$, we conclude 
	\begin{align*}
	\| (\Delta_{2} + z^2)^{-\alpha} - \mathcal{G}(z)^\alpha \|_{\textup{tr}} 
	\leq C_N z^{-N}.
	\end{align*}
A similar statement holds for the corresponding monomials and hence 
the monomials $\langle \Delta_{2} + z^2\rangle^{-\alpha}$ admit the desired 
trace 
asymptotics for $\alpha > \dim W_2$ as well. This establishes the second 
statement in depth $2$.
The higher depth case is studied analogously using Theorem \ref{BS-extended-thm} with parameters.
\end{proof}

\section*{Appendix: Singular Asymptotics Lemma with parameters}

The proof of the following theorem can be obtained taking in count the 
uniformity in $s\in \R^b$ along the lines of the proof of \cite[Theorem 
2.1.11]{LesL}.

\begin{theorem}[Singular Asymptotics Lemma with parameters] Suppose that 
	$\sigma(x,s,\zeta)$ is defined on $\R \times \R^b\times C $, where $C$ is 
	the 
	sector $\{|\arg \zeta|<\pi-\varepsilon\}$ and $\sigma$ is smooth in $x$ with 
	derivatives analytic in $\zeta$. 
	
	Assume furthermore:
	\begin{enumerate}
		\item[a)] The function $\sigma(x,s,\zeta)$ has a 
		differentiable 
		asymptotic expansion as $\zeta \to \infty$ uniformly in $s$. More 
		precisely, there are functions $\sigma_{\alpha j}(x,s)$ with 
		$\sigma_{\alpha j}(\cdot,s)\in \mathcal{S}(\R)$ such that for 
		$J,K,M\in 
		\N$,
		\begin{equation}
		\left|x^J \partial_{x}^K\left(\sigma(x,s,\zeta)- 
		\sum_{\Re(\alpha)>-M}\sum_{j=0}^{J_\alpha} \sigma_{\alpha 
			j}(x,s)\zeta^{\alpha}\log^j \zeta\right)\right|\leq C_{JKM}\ 
		|\zeta|^{-M},
		\end{equation}
		for $s\in\R^b$,  
		$|\zeta|\geq 1$, $0\leq x \leq |\zeta|/c_0$ and 
		$C_{JKM}$ independent of $s$. 
		Note that for each $M$ there are at 
		most 
		finitely many indices $\Re(\alpha)>-M$, $j\in \N_0$ with 
		$\sigma_{\alpha 
			j}\not= 0$.
		
		\item[b)]  The derivatives 
		$\sigma^{(j)}(x,s,\zeta) := 
		\partial^j_x\sigma(x,s,\zeta)$ satisfy
		\begin{equation}
		\int_{0}^{1}\int_{0}^{1} y^j |\sigma^{(j)}(\theta y t,s, y \xi)| dy 
		dt 
		\leq 
		C_j,
		\end{equation}
		uniformly for $0\leq \theta \leq 1, |\xi| = c_0$ and $s\in \R^b$. 
	\end{enumerate}
	Then
	\begin{equation}
	\begin{aligned}
	\int_0^\infty \sigma(x,s,xz) dx &\sim_{z\to\infty} 
	\sum_{k\geq 0} 
	z^{-k-1}\regint_{0}^{\infty} 
	\frac{\zeta^k}{k!} \sigma^{(k)}(0,s,\zeta)d\zeta\\
	&+\sum_{\alpha,j}\regint_{0}^\infty \sigma_{\alpha j}(x,s)(xz)^\alpha 
	\log^j(xz) dx\\
	&+\sum_{\alpha=-1}^{-\infty} \sum_{j=0}^{J_\alpha} 
	\sigma^{(-\alpha-1)}(0,s)\frac{z^{\alpha}\log^{j+1}z}{(j+1)(-\alpha-1)!},
	\end{aligned}
	\end{equation}
	uniformly in $s$.
\end{theorem}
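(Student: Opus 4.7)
The strategy is to follow the proof of the classical Singular Asymptotics Lemma (as in \cite[Theorem 2.1.11]{LesL} or \cite[p.\ 372]{BS2}) essentially verbatim, carrying the parameter $s\in\R^b$ through every step, and simply observing that hypotheses (a) and (b) already supply all constants independent of $s$. Since no new analytical tool is needed beyond those used in the parameter-free case, the uniformity in $s$ of the resulting expansion is inherited directly from the uniformity built into the hypotheses, and no new compactness or parameter-regularity assumption on $s$ is required.

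Concretely, I would split the integral at some threshold, say $x=z^{-1/2}$, as $\int_0^\infty\sigma(x,s,xz)\,dx = I_1(s,z)+I_2(s,z)$. For $I_2$ (large $x$, hence large $\zeta=xz$), I would substitute the uniform asymptotic expansion from (a); the principal terms feed the second sum of the conclusion via the regularized integrals $\regint_0^\infty \sigma_{\alpha j}(x,s)(xz)^\alpha\log^j(xz)\,dx$, defined by analytic continuation in $\alpha$. The $O(|\zeta|^{-M})$ remainder in (a) contributes an error of order $z^{-M/2}$ that is uniform in $s$ because $C_{JKM}$ is $s$-independent. For $I_1$ (small $x$), I would apply Taylor's formula in $x$ at $x=0$,
\begin{equation*}
\sigma(x,s,\zeta)=\sum_{k=0}^{K-1}\frac{x^k}{k!}\sigma^{(k)}(0,s,\zeta) + R_K(x,s,\zeta),
\end{equation*}
substitute $\zeta=xz$, and change variables $\eta=xz$. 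The principal terms produce $\sum_k z^{-k-1}\int_0^{\sqrt z}\eta^k\sigma^{(k)}(0,s,\eta)\,d\eta/k!$; extending the upper limit to $\infty$ via analytic continuation yields the first sum of the conclusion. The Taylor remainder $R_K$ is controlled uniformly in $s$ by the integrability hypothesis (b), after a change of variables $y=x\sqrt z$, $\xi=c_0$, giving an error of order $z^{-(K+1)/2}$ uniform in $s$.

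The delicate point, and what I expect to be the main technical obstacle, is the interaction of the two expansions at the non-generic exponents $\alpha\in-\N$. At such values, both the small-$x$ Taylor terms and the large-$\zeta$ asymptotic terms feed the same regularized integral $\regint_0^\infty x^\alpha\log^j(xz)\,dx$, which develops a pole (in $\alpha$) whose resolution via Hadamard-style renormalization produces an additional factor $\log z$. Tracking these pole orders and their cancellations is what generates the third sum in the conclusion, with the coefficients $\sigma^{(-\alpha-1)}(0,s)$ and the enhanced logarithmic power $\log^{j+1} z$. Since the residue computations are purely algebraic in the coefficients $\sigma_{\alpha j}(\cdot,s)$ and $\sigma^{(k)}(0,s,\cdot)$, and since hypotheses (a), (b) enforce $s$-uniformity on all error terms that accompany them, the final expansion holds uniformly in $s\in\R^b$ as claimed.
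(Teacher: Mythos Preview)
Your proposal is correct and matches the paper's own treatment exactly: the paper does not supply a proof at all, stating only that ``the proof of the following theorem can be obtained taking into account the uniformity in $s\in \R^b$ along the lines of the proof of \cite[Theorem 2.1.11]{LesL}.'' Your sketch fleshes out precisely that strategy, and in fact gives more detail than the paper does.
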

	
\bibliography{local}
\bibliographystyle{amsalpha-lmp}

\end{document}